\numberwithin{equation}{section}\theoremstyle{plain}
\newtheorem{thm}{Theorem}[section]
\newtheorem{prop}[thm]{Proposition}
\newtheorem{lem}[thm]{Lemma}
\newtheorem{cor}[thm]{Corollary}
\theoremstyle{definition}
\newtheorem{rem}[thm]{Remark}
\newcommand\I{\mathbb I}
\def \N {\mathbb{N}}
\def \Z {\mathbb{Z}}
\def \k {\Bbbk}
\def \o {\otimes}
\def \s {\sigma^{-1}}
\begin{document}


 \title[On Partial actions of Hopf-Ore extensions]{On Partial actions of Hopf-Ore extensions}
 
\author[Giraldi, Martini and Silva]{Jo\~{a}o M. J. Giraldi, Grasiela Martini and Leonardo D. Silva}

\address[Giraldi]{Universidade Federal do Rio Grande do Sul, Brazil}
\email{joao.giraldi@ufrgs.br}

\address[Martini]{Universidade Federal do Rio Grande do Sul, Brazil}
\email{grasiela.martini@ufrgs.br}

\address[Silva]{Universidade Federal do Rio Grande do Sul, Brazil.}
\email{dsleonardo@ufrgs.br}

\thanks{\noindent \textbf{2020 MSC:} Primary 16T05; Secondary 16T99. \\ 
	\hspace*{0.3cm}
	\textbf{Key words and phrases:} Partial action; Hopf-Ore extension; Rank one Hopf
algebra.	\\
The second and third author were partially supported by FAPERGS (Brazil), projects n. 23/2551-0000803-3 and 23/2551-0000913-7, respectively.}

\begin{abstract}
In this work we study how to extend a partial action of a Hopf Algebra $A$ on an algebra $R$ to a partial action of a Hopf-Ore extension of $A$ on $R$.
As consequence, we characterize all partial actions of rank one Hopf
algebras (in particular, generalized Taft algebras and Radford algebras), under suitable conditions.
\end{abstract}

\maketitle

\tableofcontents

\section{Introduction}

In the last years, the theory of partial actions and representations is a subject widely studied in several contexts such as groups \cite{Dokuchaev_exel}, groupoids \cite{paques_bagio}, (weak) Hopf algebras \cite{enveloping, caenepeel2008partial, glauber_e_fefi}, categories \cite{ hopf_categories}, among others.
For partial actions of Hopf algebras, despite of the developed theory, the computations to characterize  explicit examples are a challenging work. 

Recently, this question has been addressed in \cite{FMS, FMS2, corresponding}.
In these works, the applied method to get the partial actions of some families of Hopf algebras usually lies in extending the partial actions of Hopf subalgebras.
Namely, some partial actions of pointed Hopf algebras are computed extending the partial actions of their group of grouplike elements.
This technique is the same used as in the global case (see, for instance \cite{Acoes_taft_Centrone, Acoes_taft_Chelsea}).

In this work, we approach this question in a more general way.
Since the families of Hopf algebras treated in the above references are quotients of Hopf-Ore extensions, we investigate when is possible to extend the partial action of a Hopf algebra to a Hopf-Ore extension. In particular, Theorem \ref{formula_eh_acao_parcial_volta} shows that, under suitable conditions, the partial actions still depend only on the partial action of the base Hopf algebra, a skew-derivation  and a compatibility between them.

We highlight that the techniques and results developed here are  effective tools. Indeed, in this paper we characterize some partial actions of interesting families of Hopf algebras, as the rank one Hopf algebras (see \S \ref{Sec_Rank_one}); such classification  is an original work to the best of authors' knowledge.
On the other hand, this (more refined) approach also recovers the main results within \cite{FMS, FMS2, corresponding}.

This work is organized as follows. In \S 2, the basic concepts and results of q-combinatorics, partial actions and Hopf-Ore extensions are briefly recalled.

In \S \ref{sec:3}, we begin determining a condition for extending a partial action of a Hopf algebra $A$ to a partial action of a Hopf-Ore extension $A[x,\sigma,\delta]$ in the trivial way.
In the sequence we obtain necessary and sufficient conditions to extend a partial action of $A$ to $A[x,\sigma]$ when $g\cdot 1_R=0$. Further, in \S \ref{Sec_quotient}, we investigate the relations between the partial actions of Hopf-Ore extensions and the partial actions of its  quotients.

In the last section, we present applications and examples. 
In Sections  \ref{Sec_xcentral} and \ref{Sec_A=KG}, we improve the sufficient/necessary conditions within Theorem \ref{formula_eh_acao_parcial_volta} for the following two cases:  $x\cdot 1_R\in Z(R)$ and  $A=\Bbbk G$, respectively.
In \S \ref{Sec_Rank_one}, we  obtain all partial actions of rank one Hopf algebras (in particular,  generalized Taft algebras and Radford algebras) on an algebra $R$ under (not so restrictive) suitable conditions.
 At last, we explore the case for Hopf-Ore extensions of Sweedler's Hopf algebra and Nichols Hopf algebras, respectively in \S \ref{Sec_Sweedler} and \S \ref{Sec_Nichols}.

\subsection*{Conventions and notations}\label{Sec_convention_notation}

Throughout the paper, we work over an algebraically closed field $\k$ of characteristic zero. The group of multiplicative invertible elements of $\k$ is denoted by $\Bbbk^{\times}$.
For $n\geq 1$, $\mathbb{G}_n\subset \Bbbk^{\times} $ is the group of $n$-th roots of unity; by $\mathbb{G}'_n \subset \mathbb{G}_n$, we mean the subset of primitive roots.

For the positive integers, we use the notation $ \mathbb{N} $, while $\mathbb{N}_0 = \mathbb{N}\cup \{ 0\}$.
If $j \leq k\in\mathbb{Z}$, then we consider $\I_{j,k} = \{j, j + 1, \cdots , k\}$ and $\I_k = \I_{1, k}$. 

Let $\mathcal{H}$ be a Hopf algebra.
We denote the group of its grouplike elements by $G(\mathcal{H})$.
Given $h, g \in G(\mathcal{H})$, then an element $x$ is called a $(h,g)$-primitive if $\Delta(x) = x\o h + g\o x$.
The space of all $(h,g)$-primitive elements is 
$P_{h,g}(\mathcal{H})$.
Moreover, $Z(R)$ is the center of a general algebra $R$.

\section{Preliminaries}

\subsection{q-combinatorics}\label{Sec_qcomb}
For this section, we refer  \cite[\S 7.2]{livro_radford}. 
Let $n, m \in \Z$, $n \geq 0$, and $q \in \k^{\times}$. The \emph{q-binomial coefficients} are defined recursively as follows.
First, set ${0 \choose 0}_q = 1$ and ${n \choose m}_q = 0$  if $m>n$ or $m<0$.
Then, for $n \geq 1$ and $0 \leq m \leq n$,
\begin{align} \label{6.7}
	{n \choose m}_q  =  {{n-1} \choose m}_q  + q^{n-m}  {{n-1} \choose {m-1}}_q.
\end{align}
As a consequence of the definition, observe that 
\begin{align}\label{id_q}
	{n \choose m}_q = q^{m(n-m)} { n \choose m}_{q^{-1}}.
\end{align}

Assume that $q\in \mathbb{G}'_N$.  For $k \geq 0$, let $k_D$ and $k_R$ be the integers determined by $k =k_D \, N + k_R$ where $0 \leq k_R <N$.
Then 
\begin{align}\label{radford}
	{n \choose m}_q  = {n_R \choose m_R}_q {n_D \choose m_D},
\end{align}
for all $0 \leq m \leq n$.

\begin{lem} \cite[Lem. 2.2 and 2.3]{FMS2} 
Let $q \in \k^{\times}$ and $i, j, k\in \N_0$.
	Then, 
 \begin{eqnarray}
     &&{ j \choose k }_q { {j-k} \choose {i-k} }_q = { j \choose i }_q { i \choose k }_q; \label{Lema2_2}\\
     && \sum_{s=0}^{j} (-1)^{j-s} q^{\frac{s(s+1)}{2}-sj} {j \choose s}_q {{s+i} \choose {k}}_q = \, q^{\frac{j(j+1)}{2} + j(i-k)} {i \choose k-j}_q. \label{lema_acao_da_acao} 
      \end{eqnarray}
  \end{lem}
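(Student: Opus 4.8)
The plan is to treat the two identities separately, since \eqref{Lema2_2} is an identity of products while \eqref{lema_acao_da_acao} is an alternating (finite-difference) identity requiring induction.

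For \eqref{Lema2_2}, I would first record the closed product form ${n \choose m}_q = \frac{[n]_q!}{[m]_q!\,[n-m]_q!}$, where $[n]_q! = [1]_q[2]_q\cdots[n]_q$ and $[r]_q = 1+q+\cdots+q^{r-1}$; this follows from the recursion \eqref{6.7} by a routine induction on $n$. The identity \eqref{Lema2_2} is then the $q$-analogue of the ``subset of a subset'' identity: in the range $0 \le k \le i \le j$, expanding both sides via the product form gives
\begin{align*}
{ j \choose k }_q { {j-k} \choose {i-k} }_q = \frac{[j]_q!}{[k]_q!\,[i-k]_q!\,[j-i]_q!} = { j \choose i }_q { i \choose k }_q,
\end{align*}
so both sides coincide. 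I would then dispatch the remaining cases ($k>i$, $i>j$, or $k>j$) by observing that in each the stated index constraints force a negative or oversized lower index, so both sides vanish by the conventions fixed after \eqref{6.7}.

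For \eqref{lema_acao_da_acao}, write $S_j(i,k)$ for its left-hand side and argue by induction on $j$. The base case $j=0$ is immediate, both sides reducing to ${i \choose k}_q$. For the inductive step I would apply the $q$-Pascal recursion \eqref{6.7} to ${j+1 \choose s}_q = {j \choose s}_q + q^{\,j+1-s}{j \choose s-1}_q$, splitting $S_{j+1}(i,k)$ into two sums. After reindexing the second sum via $s \mapsto s+1$ and simplifying the accumulated powers of $q$, both sums acquire the common exponent $\tfrac{s(s+1)}{2}-s(j+1)$, and they combine into the single expression
\begin{align*}
S_{j+1}(i,k) = \sum_{s=0}^{j} (-1)^{j-s}\, q^{\frac{s(s+1)}{2}-s(j+1)} {j \choose s}_q \left[{s+i+1 \choose k}_q - {s+i \choose k}_q\right].
\end{align*}
Applying \eqref{6.7} once more to the bracketed difference yields ${s+i+1 \choose k}_q - {s+i \choose k}_q = q^{\,s+i+1-k}{s+i \choose k-1}_q$, which is the $q$-Pascal telescoping that drives the classical case.

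Substituting this and recombining the $q$-powers collapses the sum to the clean recurrence $S_{j+1}(i,k) = q^{\,i+1-k}\,S_j(i,k-1)$. Invoking the induction hypothesis on $S_j(i,k-1)$ and using that ${i \choose k-1-j}_q = {i \choose k-(j+1)}_q$ leaves only an exponent check, namely that $\tfrac{j(j+1)}{2} + (j+1)$ equals $\tfrac{(j+1)(j+2)}{2}$, which holds. I expect the main obstacle to be precisely this exponent bookkeeping: the reindexing in the second sum produces a nontrivial cancellation of the $q$-powers that is essential for the two sums to merge into the finite-difference form, and getting the quadratic exponent $\tfrac{s(s+1)}{2}$ to realign after the shift is the delicate point. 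Everything else is a direct, if careful, application of \eqref{6.7}.
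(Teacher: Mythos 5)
The paper itself contains no proof of this lemma: it is quoted with a citation to [FMS2, Lem.~2.2 and 2.3], so there is no internal argument to compare against and your proposal must stand on its own merits. Your induction for \eqref{lema_acao_da_acao} does: I verified that the $q$-Pascal split of ${j+1 \choose s}_q$, the reindexing $s \mapsto s+1$ (whose accumulated exponent $\frac{(s+1)(s+2)}{2}-(s+1)(j+1)+j-s$ does collapse to $\frac{s(s+1)}{2}-s(j+1)$), the telescoped difference ${s+i+1 \choose k}_q - {s+i \choose k}_q = q^{s+i+1-k}{s+i \choose k-1}_q$, and the resulting recurrence $S_{j+1}(i,k) = q^{i+1-k}S_j(i,k-1)$ with the final exponent check $\frac{j(j+1)}{2}+(j+1)=\frac{(j+1)(j+2)}{2}$ are all correct. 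One line should be added: when $k=0$ the induction hypothesis is invoked at $k-1=-1$, where both sides vanish by the convention ${n \choose m}_q = 0$ for $m<0$, so the induction is legitimate for all $k \in \N_0$.

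There is, however, a genuine gap in your argument for \eqref{Lema2_2}. The factorial form ${n \choose m}_q = [n]_q!/([m]_q!\,[n-m]_q!)$ divides by $[r]_q = 1+q+\cdots+q^{r-1}$, which vanishes when $q$ is a root of unity (already $[2]_{-1}=0$). The lemma is stated for arbitrary $q \in \Bbbk^{\times}$, and the paper invokes \eqref{Lema2_2} (in the proof of Theorem \ref{formula_eh_acao_parcial_volta}) with $q = \chi(g)$, which in the rank one, Sweedler and Nichols applications is precisely a root of unity; so as written your proof of \eqref{Lema2_2} fails in exactly the cases the paper needs. The repair is short and standard: either prove \eqref{Lema2_2} directly by induction from the recursion \eqref{6.7}, in which no denominators ever appear; or observe that \eqref{6.7} shows each ${n \choose m}_q$ is a polynomial in $q$ with integer coefficients, run your factorial computation over the field $\mathbb{Q}(q)$ with $q$ an indeterminate (where all the $[r]_q$ are nonzero), conclude the two sides are equal as polynomials in $\mathbb{Z}[q]$, and then specialize to any $q \in \Bbbk^{\times}$. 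With that patch, together with your vanishing analysis of the degenerate ranges $k>i$, $i>j$, $k>j$ (which is fine), the proof of the lemma is complete.
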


\subsection{Partial actions}
A \emph{(left) partial action of a Hopf algebra (or a bialgebra) $\mathcal{H}$ on an algebra $R$} is a linear map $\cdot: \mathcal{H} \otimes R \to R$, denoted by $\cdot (h \o r) = h \cdot r$, such that
\begin{itemize}
	\item[(PA.1)]\label{PA1} $1_{\mathcal{H}} \cdot r=r$, 
	\item[(PA.2)]\label{PA2} $h\cdot rs=(h_1\cdot r)(h_2\cdot s)$,
	\item[(PA.3)]\label{PA3} $h\cdot(k\cdot r)=(h_1\cdot 1_R)(h_2k\cdot r)$,
\end{itemize}
hold for all $h,k\in \mathcal{H}$ and $r,s\in R$.
In this case, $R$ is called a \textit{(left) partial $\mathcal{H}$-module algebra}.
A (left) partial action is \emph{symmetric} if in addition we have
\begin{itemize}
	\item[(PA.S)]\label{PAS} $h \cdot ( k \cdot r)=(h_1k \cdot r)(h_2 \cdot 1_R)$.
\end{itemize}

Observe that, if (PA.1) holds, then conditions (PA.2) and (PA.3) are equivalent  to the following one:
\begin{itemize}
	\item[(PA.2$^\prime$)]\label{PA2e3} $h \cdot (r( k \cdot s))=(h_1 \cdot r)(h_2k \cdot s)$.
\end{itemize}

\begin{lem}\label{ide_central}\cite[Lem. 3.1]{FMS2} Let $\cdot : \mathcal{H}\otimes R \to R$ be a partial action. For each $g\in G(\mathcal{H})$ and $r\in R$,
\begin{eqnarray}\label{ide_quasecentral}
    (g\cdot 1_R)r (g\cdot 1_R)= (g\cdot 1_R)r.
\end{eqnarray}
Moreover, if $\cdot$ is symmetric, then $(g\cdot 1_R)\in Z(R)$.
\end{lem}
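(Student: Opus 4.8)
The plan is to exploit that a grouplike element $g$ is simultaneously diagonal under comultiplication ($\Delta(g)=g\o g$, so that $g_1=g_2=g$) and invertible in $G(\mathcal H)$ with inverse $g^{-1}$; the latter lets us ``undo'' the action of $g$ by acting with $g^{-1}$. The backbone of the argument will be the single computation
$g\cdot(g^{-1}\cdot r)=(g\cdot 1_R)\,r$, which follows directly from (PA.3): substituting $g_1=g_2=g$ gives $g\cdot(g^{-1}\cdot r)=(g\cdot 1_R)(gg^{-1}\cdot r)=(g\cdot 1_R)(1_{\mathcal H}\cdot r)$, and (PA.1) finishes it. I would also record the idempotency $(g\cdot 1_R)^2=g\cdot 1_R$, an immediate consequence of (PA.2) applied to $1_R=1_R1_R$; this is the conceptual reason behind the quasi-central identity \eqref{ide_quasecentral}.

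To obtain the first identity, the key trick is to insert a harmless unit. I would write $g^{-1}\cdot r=(g^{-1}\cdot r)1_R$ and apply (PA.2) to this product. Since $g$ is grouplike, this yields $g\cdot\bigl((g^{-1}\cdot r)1_R\bigr)=\bigl(g\cdot(g^{-1}\cdot r)\bigr)(g\cdot 1_R)$. The left-hand side is unchanged from the backbone computation (because $1_R$ is the unit of $R$), so comparing the two expressions gives precisely $(g\cdot 1_R)\,r=(g\cdot 1_R)\,r\,(g\cdot 1_R)$, which is \eqref{ide_quasecentral}.

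For the ``moreover'' part, I would recompute $g\cdot(g^{-1}\cdot r)$ using the symmetry axiom (PA.S) instead of (PA.3): with $g_1=g_2=g$ and $gg^{-1}=1_{\mathcal H}$, axiom (PA.S) gives $g\cdot(g^{-1}\cdot r)=(gg^{-1}\cdot r)(g\cdot 1_R)=r\,(g\cdot 1_R)$. Matching this against the backbone value $(g\cdot 1_R)\,r$ forces $(g\cdot 1_R)\,r=r\,(g\cdot 1_R)$ for all $r\in R$, that is, $g\cdot 1_R\in Z(R)$.

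I do not expect a serious obstacle here; the only point requiring genuine care is the choice of the right auxiliary element on which to act, namely the unit-insertion $(g^{-1}\cdot r)1_R$, together with the systematic use of $g_1=g_2=g$ and $gg^{-1}=1_{\mathcal H}$ to collapse the Sweedler sums. Everything else reduces to a direct substitution into the defining axioms.
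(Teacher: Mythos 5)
Your proof is correct; note that the paper itself states this lemma with a citation to \cite[Lem. 3.1]{FMS2} and gives no proof, and your argument --- computing $g\cdot(g^{-1}\cdot r)$ once via (PA.3) to get $(g\cdot 1_R)r$, once via (PA.2) after the unit-insertion $(g^{-1}\cdot r)1_R$ to get $(g\cdot 1_R)r(g\cdot 1_R)$, and once via (PA.S) to get $r(g\cdot 1_R)$ in the symmetric case --- is exactly the standard proof of the cited result. The only cosmetic remark is that the idempotency of $g\cdot 1_R$, while true, is never actually used (it is the special case $r=1_R$ of the identity you prove), so you could omit it.
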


Notice that if $h \in \mathcal{H}$ satisfies $\Delta(h)=\tau \Delta (h)$ (where $\tau$ is the usual twist morphism), then it holds in general that  $(h_1 \cdot 1_R ) r (h_2 \cdot 1_R)= (h\cdot 1_R) r,$ and moreover $h \cdot 1_R \in Z(R)$ if the partial action $\cdot$ is symmetric.

\medbreak

Now we present some lemmas that are useful for the next sections.

\begin{lem}
 \label{parte_global}
	Let $\cdot : \mathcal{H} \o R \to R$ be a partial action, $g \in G(\mathcal{H})$ and $x \in P_{1,g}(\mathcal{H})$. If $g \cdot 1_R = 1_R$, then $g$ and $x$ act globally, i.e., $g \cdot (h \cdot r) = gh \cdot r$ and $x \cdot (h \cdot r) = xh \cdot r$, for all $h\in \mathcal{H}, r\in R$.
   \end{lem}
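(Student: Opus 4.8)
The plan is to treat the grouplike element $g$ and the $(1,g)$-primitive element $x$ separately, in each case expanding axiom (PA.3) against the relevant coproduct and invoking the hypothesis $g \cdot 1_R = 1_R$.

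First I would dispatch $g$. Since $\Delta(g) = g \otimes g$, axiom (PA.3) reads
\[ g \cdot (h \cdot r) = (g_1 \cdot 1_R)(g_2 h \cdot r) = (g \cdot 1_R)(gh \cdot r), \]
and substituting $g \cdot 1_R = 1_R$ gives $g \cdot (h \cdot r) = gh \cdot r$ at once. So $g$ acts globally with no further work.

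The crux of the $x$ part is the auxiliary identity $x \cdot 1_R = 0$. To obtain it, I would apply (PA.2) to $x \cdot (1_R 1_R) = x \cdot 1_R$, expanding $\Delta(x) = x \otimes 1 + g \otimes x$:
\[ x \cdot 1_R = (x \cdot 1_R)(1_{\mathcal{H}} \cdot 1_R) + (g \cdot 1_R)(x \cdot 1_R). \]
By (PA.1) the first summand equals $x \cdot 1_R$, and by the hypothesis the second summand also equals $x \cdot 1_R$; hence $x \cdot 1_R = 2\,(x \cdot 1_R)$, and since $\Bbbk$ has characteristic zero we conclude $x \cdot 1_R = 0$.

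With this identity available, the global action of $x$ follows from one more application of (PA.3) with $\Delta(x) = x \otimes 1 + g \otimes x$:
\[ x \cdot (h \cdot r) = (x \cdot 1_R)(1_{\mathcal{H}} h \cdot r) + (g \cdot 1_R)(xh \cdot r) = 0 + 1_R\,(xh \cdot r) = xh \cdot r. \]
I do not expect any genuine obstacle: the entire argument is a direct unwinding of the partial-action axioms. The only step that is not purely formal is the vanishing $x \cdot 1_R = 0$, which is precisely where the characteristic-zero hypothesis on $\Bbbk$ enters; without it (in characteristic $2$) the conclusion could fail.
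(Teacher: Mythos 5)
Your proof is correct and is essentially identical to the paper's: the same one-line computation for $g$, the same derivation of $x \cdot 1_R = 0$ from $x \cdot 1_R^2$, and the same final application of (PA.3). One small correction to your closing remark: the step $x \cdot 1_R = 2(x \cdot 1_R)$ forces $x \cdot 1_R = 0$ over \emph{any} field (subtract $x\cdot 1_R$ from both sides; in characteristic $2$ one even gets it directly since $2(x\cdot 1_R)=0$), so the characteristic-zero hypothesis is not actually used here.
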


\begin{proof}
Suppose that $g \cdot 1_R= 1_R$.
Then, for all $h\in \mathcal{H}, r\in R$,
$$g \cdot (h \cdot r) = (g \cdot 1_R) (gh \cdot r) = gh \cdot r.$$

Also,
$x \cdot 1_R = x \cdot 1_R^2 =  (x \cdot 1_R)(1 \cdot 1_R) + (g \cdot 1_R)(x \cdot 1_R)= 2 (x \cdot 1_R),$
that is, $x \cdot 1_R = 0$.	
Hence, 
	$$x \cdot (h \cdot r) = (x \cdot 1_R)(h \cdot r) + (g \cdot 1_R)(xh \cdot r) = xh \cdot r.$$
	\end{proof}

\begin{lem}\label{acao_produto_de_skew_primitivos_nichols}
Let $\cdot : \mathcal{H} \otimes R \to R$ be a partial action, $g \in G(\mathcal{H})$ and $x\in P_{1,g}(\mathcal{H})$ such that $xg=qgx$, for some $q \in \Bbbk^{\times}$. 
If $g \cdot 1_R =0$,
then
\begin{itemize}
\item[(i)] $x \cdot r = (x \cdot 1_R)r$ and $g^{-1}x \cdot r = -q r (x \cdot 1_R)$,  for all $r \in R$, and 
\item[(ii)] if $g^2=1$, $q=-1$ and $x^2=0$, then $(x \cdot 1_R)^2 \in Z(R)$.
\end{itemize}
\end{lem}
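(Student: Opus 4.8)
The plan is to first extract the consequences of $g\cdot 1_R=0$ for the grouplikes $g,g^{-1}$, and then to read the actions of $x$ and $g^{-1}x$ off their coproducts. Two preliminary vanishings do the heavy lifting. Since $g$ is grouplike, (PA.2) gives $g\cdot r=g\cdot(1_Rr)=(g\cdot 1_R)(g\cdot r)=0$ for all $r\in R$. Applying (PA.3) to $g^{-1}\cdot(g\cdot 1_R)$ and using $g^{-1}g=1$ gives $(g^{-1}\cdot 1_R)(g^{-1}g\cdot 1_R)=g^{-1}\cdot 1_R$, while the left-hand side is $g^{-1}\cdot 0=0$; hence $g^{-1}\cdot 1_R=0$ (and likewise $g^{-1}\cdot r=0$).

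For (i), the first identity follows by expanding $x\cdot r=x\cdot(1_Rr)$ via (PA.2) with $\Delta(x)=x\o 1+g\o x$: the $g\o x$ summand dies by $g\cdot 1_R=0$, leaving $x\cdot r=(x\cdot 1_R)r$. For the second identity I would use $\Delta(g^{-1}x)=g^{-1}x\o g^{-1}+1\o g^{-1}x$; expanding $(g^{-1}x)\cdot r=(g^{-1}x)\cdot(r1_R)$ and dropping the term carrying $g^{-1}\cdot 1_R=0$ shows that $g^{-1}x$ acts by right multiplication, $(g^{-1}x)\cdot r=r\,(g^{-1}x\cdot 1_R)$. To pin down the constant $g^{-1}x\cdot 1_R$, the key move is to apply (PA.3) to $(g^{-1}x)\cdot(g\cdot r)$: the left-hand side vanishes since $g\cdot r=0$, while the right-hand side, using $g^{-1}g=1$ and the relation $g^{-1}xg=qx$ (equivalent to $xg=qgx$), equals $(g^{-1}x\cdot 1_R)\,r+q\,(x\cdot 1_R)\,r$. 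Setting $r=1_R$ forces $g^{-1}x\cdot 1_R=-q\,(x\cdot 1_R)$, whence $(g^{-1}x)\cdot r=-q\,r\,(x\cdot 1_R)$.

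For (ii), set $a=x\cdot 1_R$. As $g^2=1$ we have $g^{-1}=g$, so part (i) with $q=-1$ yields $x\cdot r=ar$ and $(gx)\cdot r=ra$ (in particular $gx\cdot 1_R=a$): $x$ and $gx$ act by left and right multiplication by $a$, respectively. I would then evaluate $(gx)\cdot\bigl((gx)\cdot r\bigr)$ in two ways. Applying the right-multiplication formula twice gives $(gx)\cdot(ra)=ra^2$. On the other hand, (PA.3) with $\Delta(gx)=gx\o g+1\o gx$ produces a first term $(gx\cdot 1_R)\,(g^2x\cdot r)=a\,(x\cdot r)=a^2r$ and a second term $(gxgx\cdot r)$ that vanishes because $gxgx=-g^2x^2=0$. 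Equating the two evaluations gives $a^2r=ra^2$ for all $r\in R$, i.e.\ $(x\cdot 1_R)^2\in Z(R)$.

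The main obstacle is the evaluation of $g^{-1}x\cdot 1_R$ in (i). The naive approach of factoring the product $g^{-1}x$ through (PA.3), as in the global case, collapses here: every resulting summand carries a factor $g\cdot 1_R$ or $g^{-1}\cdot 1_R$, both of which are $0$, so the same hypothesis that trivializes $g$ also erases the expected bookkeeping. The resolution is to place the vanishing factor $g\cdot r=0$ in the \emph{left} slot of (PA.3) (taking $h=g^{-1}x$, $k=g$), forcing the algebra relation $g^{-1}xg=qx$ to surface on the other side. The same device powers (ii): among the iterated actions only the twisted pair $(gx,gx)$ --- and not $(x,x)$, whose two evaluations coincide --- generates the left/right asymmetry that makes $a^2$ central.
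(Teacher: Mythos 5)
Your proof is correct and follows essentially the same route as the paper's: you read the left/right multiplication formulas off the coproducts of $x$ and $g^{-1}x$ via (PA.2), pin down $g^{-1}x\cdot 1_R=-q\,(x\cdot 1_R)$ by applying (PA.3) to $g^{-1}x\cdot(g\cdot\,\underline{\ \ }\,)$ so that the relation $g^{-1}xg=qx$ surfaces, and obtain centrality in (ii) by evaluating $gx\cdot(gx\cdot r)$ in two ways using $gxgx=-g^2x^2=0$. The only difference is cosmetic: you explicitly derive $g^{-1}\cdot 1_R=0$, which the paper merely asserts.
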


\begin{proof} Assume $g \in G(\mathcal{H})$ and $g \cdot 1_R =0$.
Thus, $g\cdot r=g^{-1}\cdot r=0$, for all $r\in R$. 

$(i)$ Since $x \in P_{1,g}(\mathcal{H})$, we have $g^{-1}x \in P_{g^{-1}, 1}(\mathcal{H})$.
Then $x\cdot r=x\cdot (1_Rr)=(x\cdot 1_R)r$ and $g^{-1}x\cdot r=g^{-1}x\cdot (r 1_R)=r(g^{-1}x\cdot 1_R)$. Moreover, 
$$0=g^{-1}x\cdot(g\cdot 1_R)= (g^{-1}x\cdot 1_R)+ (g^{-1}xg\cdot 1_R).$$
Then, $g^{-1}x\cdot 1_R=-q(x\cdot 1_R)$ and, therefore, $g^{-1}x\cdot r=-qr(x\cdot 1_R)$.

$(ii)$ Assume that $g^2=1, q=-1$ and $x^2=0$.
Thus $gx\cdot(gx\cdot r)=(gx\cdot 1_R)(x\cdot r)$. 
By item (i), it follows that $r(x\cdot 1_R)^2=(x\cdot 1_R)^2r$, for all $r\in R$.
\end{proof}

\begin{lem}\label{acao_produto_de_skew_primitivos_nichols2}
Let $\cdot : \mathcal{H} \otimes R \to R$ be a partial action, $g \in G(\mathcal{H})$ and  $x,y\in P_{1,g}(\mathcal{H})$ such that $g^2=1$, $xg=-gx$, $yg=-gy$.
Then,
\begin{itemize}
\item[(i)] if $(x \cdot 1_R), (y \cdot 1_R) \in Z(R)$ and $g\cdot 1_R=0$, then $xy \cdot r = gxy \cdot r = 0$, for all $r\in R$,
\item[(ii)] if $h\in \mathcal{H}$ satisfies $h \cdot r = gh\cdot r = 0$, for some $r\in R$, then $xh \cdot r = gxh \cdot r = 0$.
\end{itemize}
\end{lem}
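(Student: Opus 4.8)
The plan is to read off both conclusions from axiom (PA.3), $h\cdot(k\cdot r)=(h_1\cdot 1_R)(h_2k\cdot r)$, but with the outer element taken to be $gx$ rather than $x$. The key point is that $\Delta(gx)=gx\otimes g+1\otimes gx$ carries a $1\otimes gx$ leg, so when I expand $gx\cdot(k\cdot r)$ the product element $gxk$ surfaces against the \emph{unit} coefficient $1_{\mathcal H}\cdot 1_R=1_R$. By contrast, $\Delta(x)=x\otimes 1+g\otimes x$ would pair the product element $xk$ only with $g\cdot 1_R$, which in part (i) equals $0$ and is therefore useless. Overcoming precisely this degeneracy, by passing from $x$ to $gx$ (legitimate since $g^2=1$), is the crux of the argument.

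For part (i) I would first record the elementary evaluations of Lemma~\ref{acao_produto_de_skew_primitivos_nichols}(i), applicable because $g\cdot 1_R=0$ and $xg=-gx$, $yg=-gy$ (the case $q=-1$): namely $x\cdot r=(x\cdot 1_R)r$, $gx\cdot r=r(x\cdot 1_R)$, and likewise for $y$. Expanding $gx\cdot(y\cdot r)$ by (PA.3) produces $(gx\cdot 1_R)(gy\cdot r)+gxy\cdot r$, while evaluating the left-hand side directly through the formulas above gives $(y\cdot 1_R)\,r\,(x\cdot 1_R)$; comparing the two and using $(x\cdot 1_R),(y\cdot 1_R)\in Z(R)$ to commute these factors past $r$ and each other collapses everything except the product term, forcing $gxy\cdot r=0$. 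For $xy\cdot r$ I run the identical computation on $gx\cdot(gy\cdot r)$, now using $gxgy=g(xg)y=-g^2xy=-xy$; here the product term appears as $-xy\cdot r$, and centrality once more cancels the surviving quadratic terms, yielding $xy\cdot r=0$.

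For part (ii), where $g\cdot 1_R=0$ is \emph{not} assumed, both conclusions fall out immediately from the same mechanism, using only $h\cdot r=0=gh\cdot r$. Expanding $gx\cdot(h\cdot r)$ by (PA.3) gives $(gx\cdot 1_R)(gh\cdot r)+gxh\cdot r$; the first summand vanishes because $gh\cdot r=0$ and the whole left-hand side vanishes because $h\cdot r=0$, whence $gxh\cdot r=0$. Symmetrically, expanding $gx\cdot(gh\cdot r)$ gives $(gx\cdot 1_R)(h\cdot r)+gxgh\cdot r$, using $g(gh)=g^2h=h$ in $\mathcal H$; the first term vanishes since $h\cdot r=0$, the left-hand side vanishes since $gh\cdot r=0$, and $gxgh=g(xg)h=-g^2xh=-xh$, so $xh\cdot r=0$.

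The only genuinely delicate point is the bookkeeping in part (i): the raw identities produce expressions such as $(y\cdot 1_R)\,r\,(x\cdot 1_R)$ against $(x\cdot 1_R)\,r\,(y\cdot 1_R)$, which coincide \emph{only} after invoking $(x\cdot 1_R),(y\cdot 1_R)\in Z(R)$; without centrality the product elements would not act trivially and the cancellation would fail. Everything else is a routine application of (PA.3) together with Lemma~\ref{acao_produto_de_skew_primitivos_nichols} and the relations $g^2=1$, $xg=-gx$, $yg=-gy$.
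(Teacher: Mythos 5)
Your proposal is correct and follows essentially the same route as the paper: both compute $gx\cdot(y\cdot r)$, $gx\cdot(gy\cdot r)$ (resp.\ $gx\cdot(h\cdot r)$, $gx\cdot(gh\cdot r)$) via (PA.3) together with Lemma~\ref{acao_produto_de_skew_primitivos_nichols} and the relation $gxgy=-xy$, then use centrality of $x\cdot 1_R$ and $y\cdot 1_R$ to cancel the remaining terms in part (i). Your sign bookkeeping and the vanishing arguments in part (ii) match the paper's proof exactly.
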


\begin{proof}
First observe that, for all $r\in R$, we have
$gx\cdot(y\cdot r)=(gx\cdot 1_R)(gy\cdot r)+(gxy\cdot r)$ and $gx\cdot(gy\cdot r)=(gx\cdot 1_R)(y\cdot r)-(xy\cdot r).$
Then, by Lemma \ref{acao_produto_de_skew_primitivos_nichols}, 
\begin{align*}
    gxy\cdot r &= (y\cdot 1_R)r(x\cdot 1_R)-(x\cdot 1_R)r(y\cdot 1_R) \quad\quad \text{and} \\
    xy\cdot r &= (x\cdot 1_R)(y\cdot 1_R)r-r(y\cdot 1_R)(x\cdot 1_R).
\end{align*}
Since $(x \cdot 1_R), (y \cdot 1_R) \in Z(R)$, it follows that $xy\cdot r=gxy\cdot r=0$, for all $r\in R$, and it concludes item (i).

For (ii), assume that $h \cdot r = gh \cdot r = 0$.
Hence, 
$gx\cdot(gh\cdot r)=(gx\cdot 1_R)(h\cdot r)-xh\cdot r$ and $gx\cdot(h\cdot r)=(gx\cdot 1_R)(gh\cdot r)+gxh\cdot r$, and so $xh \cdot r = gxh \cdot r = 0$. 
\end{proof}

\subsection{Hopf-Ore extensions}

Let $A$ be an algebra, and consider an automorphism $\sigma$ of $A$ and a $\sigma$-derivation $\delta$ of $A$, \emph{i.e.}, a linear map $\delta: A \to A$ such that $\delta(ab)=\delta(a)b+\sigma(a)\delta(b)$.
The \emph{Ore extension $A[x,\sigma,\delta]$ of the algebra $A$} is the algebra  generated by $A$ and a variable $x$, subjected to the relation
\begin{equation}\label{prod_ore}
    xa=\sigma(a)x+\delta(a),
\end{equation}
for all $a\in A$. Every element of $A[x,\sigma,\delta]$ can uniquely be represented as $\sum_{i \geq 0}a_ix^i$, where $a_i \in A$.

In \cite{Panov}, Panov determined necessary and sufficient conditions for an Ore extension $A[x,\sigma,\delta]$ over a Hopf algebra $A$ to have a structure of a Hopf algebra which extends the Hopf algebra structure of $A$ and such that $x$ is a skew-primitive element, that is, $\Delta(x)=x \o 1 + g \o x$, for some $g \in G(A)$. In this case, $A[x,\sigma,\delta]$ is called a \emph{Hopf-Ore extension of $A$}. 

\begin{thm} \cite[Thm. 1.3]{Panov}  \label{teo_panov} The Hopf algebra $A[x,\sigma,\delta]$ is a Hopf-Ore extension if only if
\begin{itemize}
    \item[(i)] there exists a character $\chi: A\to \Bbbk$ such that $\sigma(a) = \chi(a_1)a_2 = ga_1g^{-1}\chi(a_2)$,
    \item[(ii)] the $\sigma$-derivation $\delta$ satisfies $\Delta(\delta(a))=\delta(a_1)\otimes a_2+ga_1\otimes\delta(a_2)$.
\end{itemize}
    \end{thm}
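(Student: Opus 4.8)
The plan is to read the whole statement as a single bookkeeping computation: apply the comultiplication to the defining Ore relation \eqref{prod_ore} and match coefficients. Writing $B:=A[x,\sigma,\delta]=\bigoplus_{i\geq 0}Ax^i$ as a left $A$-module, we get $B\otimes B=\bigoplus_{i,j\geq 0}Ax^i\otimes Ax^j$, so any identity in $B\otimes B$ can be checked componentwise in $Ax\otimes A$, in $A\otimes Ax$, and in $A\otimes A$. Both implications are then this one componentwise identity read in opposite directions: the forward direction extracts (i) and (ii) from it, while the converse uses (i) and (ii) to make $\Delta$ and $\varepsilon$ well defined and then checks the remaining axioms.

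For the forward implication, assume $B$ is a Hopf-Ore extension, so that $\Delta$ is an algebra map with $\Delta(x)=x\otimes 1+g\otimes x$ for some $g\in G(A)$. Since $\Delta(xa)=\Delta(x)\Delta(a)$ and, by \eqref{prod_ore}, also $\Delta(xa)=\Delta(\sigma(a))\Delta(x)+\Delta(\delta(a))$, I would expand both expressions and rewrite $xa_1,xa_2$ via \eqref{prod_ore}, obtaining
\[
\Delta(x)\Delta(a)=\sigma(a_1)x\otimes a_2+\delta(a_1)\otimes a_2+ga_1\otimes\sigma(a_2)x+ga_1\otimes\delta(a_2),
\]
to be compared with $\sigma(a)_1x\otimes\sigma(a)_2+\sigma(a)_1g\otimes\sigma(a)_2x+\Delta(\delta(a))$. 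Matching the $Ax\otimes A$-components gives $\Delta(\sigma(a))=\sigma(a_1)\otimes a_2$; matching the $A\otimes Ax$-components gives $\Delta(\sigma(a))(g\otimes 1)=ga_1\otimes\sigma(a_2)$, hence $\Delta(\sigma(a))=ga_1g^{-1}\otimes\sigma(a_2)$; and matching the $A\otimes A$-components is exactly (ii). Setting $\chi:=\varepsilon\circ\sigma$, which is a character as a composite of algebra maps, and applying $\varepsilon\otimes\mathrm{id}$ to the first identity and $\mathrm{id}\otimes\varepsilon$ to the second yields $\sigma(a)=\chi(a_1)a_2$ and $\sigma(a)=ga_1g^{-1}\chi(a_2)$, which is (i).

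For the converse, assume (i) and (ii) hold for some $g\in G(A)$. Applying $\Delta$ together with coassociativity to the two expressions for $\sigma$ in (i) recovers $\Delta(\sigma(a))=\sigma(a_1)\otimes a_2$ and $\Delta(\sigma(a))=ga_1g^{-1}\otimes\sigma(a_2)$. By the universal property of the Ore extension, an algebra map $\Delta\colon B\to B\otimes B$ extending $\Delta_A$ with $\Delta(x)=x\otimes 1+g\otimes x$ exists as soon as the images satisfy \eqref{prod_ore}, and this is precisely the previous display read backwards, so (i) and (ii) guarantee it. Likewise $\varepsilon\colon B\to\k$ extending $\varepsilon_A$ with $\varepsilon(x)=0$ is well defined once $\varepsilon(\delta(a))=0$, which follows from (ii) by applying $\varepsilon\otimes\varepsilon$. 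Coassociativity and the counit axiom are identities between algebra maps into $B^{\otimes 3}$ and $B$, so it suffices to verify them on the generating set $A\cup\{x\}$: on $A$ they hold because $A$ is a bialgebra, and on $x$ they are immediate from $\Delta(x)=x\otimes 1+g\otimes x$ and $\Delta(g)=g\otimes g$. Hence $B$ is a bialgebra in which $x$ is skew-primitive.

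It remains to produce the antipode, which I expect to be the main obstacle. The direct route is to define $S$ as the algebra anti-endomorphism of $B$ with $S|_A=S_A$ and $S(x)=-g^{-1}x$; its well-definedness reduces once more to compatibility with \eqref{prod_ore}, namely $S(a)S(x)=S(x)S(\sigma(a))+S(\delta(a))$, an identity that unwinds into relations among $S_A$, $\sigma$ and $\delta$ following from (i) and (ii). One then checks the antipode axioms $m(S\otimes\mathrm{id})\Delta=\eta\varepsilon=m(\mathrm{id}\otimes S)\Delta$ on $A\cup\{x\}$, where on $A$ they reduce to the antipode of $A$ and on $x$ to a one-line computation using $S(g)=g^{-1}$. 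A more robust alternative is to note that the $x$-degree filtration $F_n=\bigoplus_{i\leq n}Ax^i$ is a bialgebra filtration with $F_{-1}=0$ and $F_0=A$ a Hopf algebra, so $B$ is a filtered bialgebra whose bottom piece is already Hopf and the antipode is built degree by degree, the leading term of $\Delta$ on $Ax^n$ being invertible. Either way, the computational heart is the same relation-matching as in the bialgebra step, so the genuine content of the theorem is the componentwise comparison in the first display.
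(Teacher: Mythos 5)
First, note that the paper does not prove this statement at all: it is quoted from Panov \cite[Thm.~1.3]{Panov} as an external result, so there is no internal proof to compare against. Measured against Panov's original argument, your proposal follows essentially the same route, and the bialgebra part is correct and complete in outline: since $A[x,\sigma,\delta]$ is free as a left $A$-module on $\{x^i\}_{i\geq 0}$, comparing the $Ax\otimes A$, $A\otimes Ax$ and $A\otimes A$ components of $\Delta(xa)=\Delta(x)\Delta(a)$ is legitimate and yields $\Delta(\sigma(a))=\sigma(a_1)\otimes a_2$, $\Delta(\sigma(a))=ga_1g^{-1}\otimes\sigma(a_2)$ and (ii); setting $\chi=\varepsilon\circ\sigma$ and applying $\varepsilon\otimes\mathrm{id}$, $\mathrm{id}\otimes\varepsilon$ gives (i). The converse's use of the universal property of the Ore extension for $\Delta$, the deduction $\varepsilon(\delta(a))=0$ from $(\varepsilon\otimes\varepsilon)$ applied to (ii), and the generator check of coassociativity and counit (these \emph{are} equalities of algebra maps) are all sound.

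The one genuine soft spot is exactly where you flagged it: the well-definedness of $S$ is asserted, not proved, and "unwinds into relations following from (i) and (ii)" hides a real computation. To confirm it goes through: the relation to verify is $S(a)\bigl(-g^{-1}x\bigr)=\bigl(-g^{-1}x\bigr)S(\sigma(a))+S(\delta(a))$, and after moving $x$ past $S(\sigma(a))$ via \eqref{prod_ore} it splits into the two identities $\sigma\bigl(S(\sigma(a))\bigr)=gS(a)g^{-1}$ and $\delta\bigl(S(\sigma(a))\bigr)=gS(\delta(a))$. The first follows from (i) using $\Delta(S(c))=S(c_2)\otimes S(c_1)$ and the collapse $\chi(a_1)\chi(S(a_2))=\varepsilon(a_1)$. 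For the second, applying $m(S\otimes\mathrm{id})$ to (ii) (with $\varepsilon\circ\delta=0$) gives $S(\delta(a))=-S(a_1)g^{-1}\delta(a_2)S(a_3)$, while applying $\delta$ to $S(a_1)a_2=\varepsilon(a)1_A$ with the $\sigma$-derivation rule gives $\delta(S(a))=-\sigma(S(a_1))\delta(a_2)S(a_3)$; substituting (i) into the latter yields $\delta(S(\sigma(a)))=-gS(a_1)g^{-1}\delta(a_2)S(a_3)=gS(\delta(a))$, as required. One further repair: the antipode axioms are \emph{not} equalities of algebra maps, so your generator check needs a different justification than the one you used for coassociativity; the standard fix is that once $S$ is an algebra anti-endomorphism, the set $\{b: S(b_1)b_2=\varepsilon(b)1=b_1S(b_2)\}$ is closed under multiplication and contains $A\cup\{x\}$, hence is all of $A[x,\sigma,\delta]$. (Alternatively, Takeuchi's lemma applied to the subcoalgebra $A+\Bbbk x$ produces the antipode directly and bypasses the well-definedness issue entirely; your filtration remark is in the same spirit, since the coradical lies in $A$.)
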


\begin{rem} \label{remark_cond_hopf ore} Note that, as $\sigma$ is an automorphism, $ax=x\s(a)-\delta(\s(a))$ and $\sigma^{-1}(a)=\chi^{-1}(a_1)a_2=g^{-1}a_1g\chi^{-1}(a_2)$, for all $a\in A$. Moreover, for all $i\in\mathbb{Z}$, it holds that
$\Delta(\sigma^{-i}(a)) = \sigma^{-i}(a_1) \otimes a_2$  and $\Delta(\sigma^{-i}(a)) = g^{-i}a_1g^i \otimes \sigma^{-i}(a_2)$.
Also, $\sigma^{-i}(a)=g^{-i}a_1g^{i}\chi^{-i}(a_2)$ and $\sigma^{-i}(g^k)=\left(\chi(g)\right)^{-ik}g^k$.
\end{rem}

\section{Extensions of partial actions via Hopf-Ore extensions} \label{sec:3}

In this section, we  characterize some partial actions of a Hopf-Ore extension $A[x,\sigma,\delta]$ on $R$. 
For a Hopf-Ore extension, the element $g \in G(A)$ plays a crucial role since $\Delta(x)=x \otimes 1 + g \otimes x$. 
Likewise, we check that the action of the element $x$ is closely related to the action of the element $g$.

Initially, we prove a case when it is possible to extend a partial action of $A$ on $R$ to a partial action of $A[x,\sigma,\delta]$ on $R$ in a trivial way.

\begin{prop}
      Let $\cdot : A \o R \to R$ be a (symmetric) partial action of a Hopf algebra $A$ on an algebra $R$.
Given a Hopf-Ore extension $A[x,\sigma,\delta]$, consider $J$ the ideal of $A$ generated by $\delta(A)$.
If $J\cdot R=0$, then the map  $\triangleright : A[x,\sigma,\delta] \otimes R \to R$, defined as $x^ja \triangleright r = \delta_{j,0} (a \cdot r)$, for all $a \in A$ and $r \in R$, is a (symmetric) partial action. 
\end{prop}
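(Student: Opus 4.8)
The plan is to avoid checking the partial-action axioms by hand and instead exhibit $\triangleright$ as the pullback of a genuine partial action along a morphism of Hopf algebras; the axioms then come for free from a general principle. Set $B:=A[x,\sigma,\delta]$ and let $(x)$ be the two-sided ideal of $B$ generated by $x$. Since $\Delta(x)=x\o 1+g\o x$, $\epsilon(x)=0$ and $S(x)=-g^{-1}x$, the ideal $(x)$ is a Hopf ideal, so $B/(x)$ is a Hopf algebra and the projection $\pi\colon B\to B/(x)$ is a Hopf morphism with $\pi(x)=0$. I would then identify $B/(x)\cong A/J$ as Hopf algebras: the relation $\delta(a)=xa-\sigma(a)x$ forces $J=(\delta(A))\subseteq (x)\cap A$, while the algebra map $B\to A/J$ sending $a\mapsto \bar a$ and $x\mapsto 0$ (well defined because it respects \eqref{prod_ore}) induces the inverse isomorphism. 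Under this identification $\pi(x^{j}a)=\delta_{j,0}\,\bar a$.

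For this to make sense I must know $J$ is a Hopf ideal of $A$. It is an ideal by definition; Theorem \ref{teo_panov}(ii) gives $\Delta(\delta(a))=\delta(a_1)\o a_2+ga_1\o\delta(a_2)\in J\o A+A\o J$, whence $\Delta(J)\subseteq J\o A+A\o J$, and applying $\mathrm{id}\o\epsilon$ to the same identity yields $\epsilon(\delta(a))=0$. For the antipode, applying $m(\mathrm{id}\o S)\Delta=\eta\epsilon$ to $\delta(a)$ gives $\delta(a_1)S(a_2)+ga_1S(\delta(a_2))=0$, so $a_1S(\delta(a_2))\in J$; regrouping $S(\delta(a))=S(a_1)\bigl(a_2S(\delta(a_3))\bigr)$ then shows $S(\delta(a))\in J$ since $J$ is an ideal. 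Hence $A/J$ is a Hopf algebra and $\pi$ is a Hopf morphism.

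Next, the hypothesis $J\cdot R=0$ says precisely that $\cdot\colon A\o R\to R$ vanishes on $J\o R$, so it descends to $\bar\cdot\colon(A/J)\o R\to R$, $\bar\cdot(\bar a\o r):=a\cdot r$. Because $A\to A/J$ is a surjective Hopf morphism, each of (PA.1)--(PA.S) for $\cdot$ descends to $\bar\cdot$ (e.g. $\bar a\,\bar\cdot\,(\bar b\,\bar\cdot\,r)=a\cdot(b\cdot r)=(a_1\cdot 1_R)(a_2b\cdot r)=(\bar a_1\,\bar\cdot\,1_R)(\bar a_2\bar b\,\bar\cdot\,r)$), so $\bar\cdot$ is a (symmetric) partial action of $A/J$ on $R$. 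The final ingredient is the pullback principle: if $\phi\colon\mathcal H\to\mathcal K$ is a Hopf morphism and $\cdot$ is a (symmetric) partial action of $\mathcal K$, then $h\triangleright r:=\phi(h)\cdot r$ is a (symmetric) partial action of $\mathcal H$. This is a short Sweedler check: (PA.1) uses $\phi(1)=1$, and for (PA.2)--(PA.S) one uses that $\phi$ is an algebra and coalgebra map, e.g. $h\triangleright(k\triangleright r)=\phi(h)\cdot(\phi(k)\cdot r)=(\phi(h_1)\cdot 1_R)(\phi(h_2k)\cdot r)=(h_1\triangleright 1_R)(h_2k\triangleright r)$, and symmetrically for (PA.S). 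Applying this with $\phi=\pi$ and $\bar\cdot$, and using $\pi(x^{j}a)=\delta_{j,0}\,\bar a$, recovers exactly $x^{j}a\triangleright r=\delta_{j,0}(a\cdot r)$, with symmetry preserved.

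The main obstacle I expect is the structural step $B/(x)\cong A/J$ together with the proof that $J$ is a Hopf ideal, especially the containment $S(J)\subseteq J$; once these are established the partial-action axioms cost nothing. A purely computational alternative would expand $\Delta(x^{j}a)$ and verify (PA.2)--(PA.S) term by term. There the delicate point is that the lower-order corrections produced when commuting $g$ past $x$ (governed by $\delta$) always lie in $J$, so that $J\cdot R=0$ annihilates them — which is the same fact in disguise, now making transparent why the hypothesis $J\cdot R=0$ is exactly what is needed.
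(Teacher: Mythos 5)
Your proposal is correct, but it takes a genuinely different route from the paper's. The paper proves the proposition by brute force: it verifies the combined axiom (PA.2$'$) directly, expanding $\Delta(x^ja)$, commuting $a$ past $x^i$ via $ax^i=\sum_{\ell=0}^i x^\ell B^i_\ell(a)$ with $B^i_0(a)=(-1)^i(\delta\circ\sigma^{-1})^i(a)$, and using $J\cdot R=0$ exactly once, to kill the surviving $\delta$-correction term $B^i_0(a_2)$. You instead factor $\triangleright$ as the pullback of the descended action of $A/J$ along the Hopf projection $A[x,\sigma,\delta]\to A[x,\sigma,\delta]/(x)\cong A/J$, so that the partial-action axioms cost nothing; the real work is relocated to the structural claims, and these are sound as you sketch them: $(x)$ is a Hopf ideal since $S(x)=-g^{-1}x$; the isomorphism $A[x,\sigma,\delta]/(x)\cong A/J$ follows from $\delta(a)=xa-\sigma(a)x$ and the universal property of the relation \eqref{prod_ore}; and $J$ is a Hopf ideal of $A$ by Panov's condition (ii) in Theorem \ref{teo_panov} together with your antipode argument $\delta(a_1)S(a_2)+ga_1S(\delta(a_2))=0$ and the regrouping $S(\delta(a))=S(a_1)\bigl(a_2S(\delta(a_3))\bigr)\in J$. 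The only compressed step is the counit: applying $\mathrm{id}\otimes\epsilon$ to $\Delta(\delta(a))$ first gives $a_1\,\epsilon(\delta(a_2))=0$, and one further application of $\epsilon$ (or applying $\epsilon\otimes\epsilon$ at once, which yields $\epsilon(\delta(a))=2\epsilon(\delta(a))$) gives $\epsilon(\delta(a))=0$ — a trivial repair, not a gap. Note also that the descent and pullback principles you invoke are precisely the two general facts the paper states (without proof) at the opening of \S\ref{Sec_quotient}, so your argument situates this proposition inside that later framework. As for what each approach buys: the paper's computation is self-contained and makes the cancellation explicit; yours explains conceptually why $J\cdot R=0$ is exactly the right hypothesis (it is the condition for $\cdot$ to factor through the Hopf quotient killing $x$), gets well-definedness and symmetry of $\triangleright$ for free rather than through the basis $\{x^ja\}$, and isolates the genuinely new content in the verification that $J$ is a Hopf ideal.
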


\begin{proof}
Suppose $J\cdot R=0$. By \eqref{prod_ore} we have $ax^i = \sum_{\ell = 0 }^i x^\ell B^i_\ell(a)$, for $B^i_\ell ( a ) \in A$, where $B^i_i(a)=\sigma^{-i}(a)$ and $B^i_0(a)=(-1)^i (\delta\circ\sigma^{-1})^i(a)$, for all $a \in A$. 
Moreover, since $\Delta(x^ja)=x^ja_1\otimes a_2+\sum_{k=1}^{j} { j \choose k }_{q^{-1}} x^{j-k} g^k a_1 \otimes x^k a_2$, on the one hand  we get
\begin{eqnarray*}
& & ( (x^ja)_1 \triangleright r) ((x^ja)_2 x^ib \triangleright s) \\
&=& (x^ja_1  \triangleright r)(a_2x^ib \triangleright s) + \sum_{k=1}^{j} { j \choose k }_{q^{-1}} ( x^{j-k} g^k a_1 \triangleright r)(x^k a_2x^ib \triangleright s)\\
     &=& (x^ja_1 \triangleright r) \left( \sum_{\ell = 0 }^i \left(x^\ell B^i_\ell(a_2) b \triangleright s \right) \right) \\
    & & + \sum_{k=1}^{j} { j \choose k }_{q^{-1}} (x^{j-k} g^k a_1 \triangleright r) \left( \sum_{\ell = 0 }^i \left( x^{k+\ell}  B^i_\ell (a_2) b \triangleright s \right) \right)\\
    &=& \delta_{j,0} (a_1\cdot r) \left( \sum_{\ell = 0 }^i \delta_{\ell,0} 
 (B^i_\ell (a_2) b \cdot s) \right)\\
    & & + \sum_{k=1}^{j} { j \choose k }_{q^{-1}} \delta_{j-k,0}(g^k a_1  \cdot r)\left( \sum_{\ell = 0 }^i \delta_{k+\ell,0} \left(B^i_\ell(a_2) b \cdot s \right) \right)\\
  &=& \delta_{j,0} (a_1\cdot r) 
 (B^i_0 (a_2) b \cdot s)\\
 &=& \delta_{j,0} (a_1\cdot r) 
 ((-1)^i (\delta\circ\sigma^{-1})^i (a_2) b \cdot s)\\
 &=& \delta_{j,0}\delta_{i,0} (a_1\cdot r) 
 (a_2 b \cdot s),
\end{eqnarray*}
where, in the last equality, we use that $J \cdot R =0$.
On the other hand,
\begin{eqnarray*}
x^ja \triangleright (r(x^ib \triangleright s))&=&x^ja \triangleright ( \delta_{i,0} r (b \cdot s)) \\
& = & \delta_{j,0} \delta_{i,0} (a \cdot (r (b \cdot s)) \\
& = & \delta_{j,0} \delta_{i,0} (a_1 \cdot r)(a_2b \cdot s).
\end{eqnarray*}
Similarly, if the partial action $\cdot$ is symmetric, then $\triangleright$ is also symmetric.
\end{proof}

In particular, when $\delta = 0$, the above result ensures that always is possible to extend any partial action of $A$ on $R$ to  the Hopf-Ore extension $A[x,\sigma]:= A[x,\sigma, 0]$, in the trivial way.

\subsection{Characterization of partial actions of Hopf-Ore extensions}

Let $\cdot : \mathcal{H}  \otimes R \to R$ a partial action and $g \in G(\mathcal{H})$.
The element $g \cdot 1_R \in R$ is always an idempotent element, and $R$ has at least two: $0$ and $1_R$.
 As consequence of Lemma \ref{parte_global}, any partial action of $A[x,\sigma, \delta]$ on $R$ such that $g \cdot 1_R = 1_R$ is totally characterized by the partial action of $A$ on $R$ and a compatible skew-derivation determined by $x$.
 
 We investigate now the case when $g \cdot 1_R = 0$. In particular, in this case, we obtain that the partial action of $A[x,\sigma]$ on $R$ depends only on the partial action of $A$ on $R$, a skew-derivation (determined by the particular element $x \cdot 1_R$), and a compatibility relation between them.
  
  The following result shows this fact in a more general way.
\begin{thm}\label{formula_eh_acao_parcial_geral}
    Let $\mathcal{H}$ be a Hopf algebra, $\cdot : \mathcal{H} \o R \to R$ a partial action, $g \in G(\mathcal{H})$ and $x \in P_{1,g}(\mathcal{H})$ such that $gx = q^{-1}xg$, for some $q\in \Bbbk^{\times}$. 
	If $g \cdot 1_R = 0$, then
\begin{align}\label{formula}
	x^ja \cdot r =  \sum_{k=0}^j (-1)^k q^{ -\frac{k(k-1)}{2}} { j \choose k }_{q^{-1}} (x \cdot 1_R)^{j-k}(g^{k}a\cdot r) (x \cdot 1_R)^k,
	\end{align}
	for any $a \in \mathcal{H}, r \in R$ and $j \in \N_0$.
\end{thm}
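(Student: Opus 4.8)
The plan is to argue by induction on $j$, with the base case $j=0$ immediate from (PA.1). The engine of the induction will be a single auxiliary identity that strips off one copy of $x$ from the left: writing $\xi:=x\cdot 1_R$, I first claim that
\[
xc\cdot r=\xi\,(c\cdot r)-(gc\cdot r)\,\xi
\]
for every $c\in\mathcal{H}$ and $r\in R$. This is precisely the $j=1$ instance of \eqref{formula}, and once it is available the general case reduces to $q$-binomial bookkeeping.

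Establishing this identity is the step I expect to be the main obstacle, precisely because the hypothesis $g\cdot 1_R=0$ makes the naive approach fail: applying (PA.3) to $x\cdot(c\cdot r)$ only yields $x\cdot(c\cdot r)=\xi\,(c\cdot r)$, since the coefficient of the desired term $xc\cdot r$ is exactly $g\cdot 1_R=0$. To circumvent this I would instead exploit the companion skew-primitive $g^{-1}x\in P_{g^{-1},1}(\mathcal{H})$, whose coproduct $\Delta(g^{-1}x)=g^{-1}x\otimes g^{-1}+1\otimes g^{-1}x$ carries the grouplike $1$ in the second tensor leg. Applying (PA.3) to $(g^{-1}x)\cdot(b\cdot r)$ and invoking Lemma \ref{acao_produto_de_skew_primitivos_nichols}(i) (which gives $g^{-1}x\cdot s=-q\,s\,\xi$ for all $s$, and in particular $g^{-1}x\cdot 1_R=-q\xi$) produces
\[
-q\,(b\cdot r)\,\xi=-q\,\xi\,(g^{-1}b\cdot r)+(g^{-1}xb\cdot r).
\]
Substituting $b=gc$ and using $g^{-1}xg=qx$ (a direct consequence of $xg=qgx$, itself equivalent to $gx=q^{-1}xg$) turns the left-hand term into $q\,(xc\cdot r)$ and $g^{-1}b$ into $c$; dividing by $q$ gives the auxiliary identity.

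For the inductive step I would write $x^{j+1}a=x(x^{j}a)$ and apply the auxiliary identity with $c=x^{j}a$, obtaining $x^{j+1}a\cdot r=\xi\,(x^{j}a\cdot r)-(gx^{j}a\cdot r)\,\xi$. Iterating $gx=q^{-1}xg$ gives $gx^{j}=q^{-j}x^{j}g$, so $gx^{j}a\cdot r=q^{-j}\,x^{j}(ga)\cdot r$, and I may now insert the induction hypothesis \eqref{formula} for both $x^{j}a\cdot r$ and $x^{j}(ga)\cdot r$ (the latter with $ga$ playing the role of $a$). After reindexing the second resulting sum by $k\mapsto k+1$, the coefficient of the common term $\xi^{\,j+1-k}(g^{k}a\cdot r)\,\xi^{k}$ becomes $(-1)^{k}q^{-\frac{k(k-1)}{2}}\left[{j\choose k}_{q^{-1}}+q^{\,k-1-j}{j\choose k-1}_{q^{-1}}\right]$, where I have used $\tfrac{k(k-1)}{2}-\tfrac{(k-1)(k-2)}{2}=k-1$ to consolidate the powers of $q$. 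The bracket is exactly the $q$-Pascal recursion \eqref{6.7} evaluated at $q^{-1}$ with $n=j+1$, $m=k$, so it collapses to ${j+1\choose k}_{q^{-1}}$; the boundary cases $k=0$ and $k=j+1$ match upon using $\tbinom{j+1}{2}=\tbinom{j}{2}+j$. This recovers \eqref{formula} for $j+1$ and closes the induction.
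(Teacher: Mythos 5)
Your proposal is correct and follows essentially the same route as the paper: both use Lemma \ref{acao_produto_de_skew_primitivos_nichols}(i) together with (PA.3) applied to $g^{-1}x\cdot(g\,c\cdot r)$ to obtain the one-step recursion $xc\cdot r=(x\cdot 1_R)(c\cdot r)-(gc\cdot r)(x\cdot 1_R)$, and then close the induction with the $q$-Pascal identity \eqref{6.7} exactly as you do (the paper derives the recursion directly for $c=x^ja$ at each inductive step rather than stating it once for general $c$, a purely cosmetic difference). The only blemish is a slip of wording where you say the substitution $b=gc$ turns the \emph{left-hand} term into $q(xc\cdot r)$ --- it is the term $(g^{-1}xb\cdot r)$ on the right that becomes $q(xc\cdot r)$ via $g^{-1}xg=qx$ --- but the computation itself is correct.
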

 \begin{proof}
	First, by Lemma \ref{acao_produto_de_skew_primitivos_nichols} we know that $g^{-1}x \cdot r = -q r (x \cdot 1_R)$.
Now, we prove the desired equality by induction on $j$. It is immediate for $j=0$, and if one wants to check for $j=1$ it follows by condition (PA.3) for the expression $g^{-1}x \cdot (ga \cdot r)$ that $xa\cdot r=(x\cdot 1_R)(a\cdot r)-(ga\cdot r)(x\cdot 1_R).$

Assume $j \geq 0$.
Then, by (PA.3), we get
	\begin{align*}
	g^{-1}x \cdot (gx^ja \cdot r) = (g^{-1}x \cdot 1_R)(g^{-1}gx^ja \cdot r)+ (1 \cdot 1_R)(g^{-1}xgx^{j}a \cdot r),
	\end{align*}
	that is, $-q (gx^ja \cdot r)(x \cdot 1_R) = -q (x \cdot 1_R)(x^ja \cdot r) + q(x^{j+1}a \cdot r),$
	and thus $x^{j+1}a \cdot r  = (x \cdot 1_R)(x^ja \cdot r)-( gx^ja \cdot r)(x \cdot 1_R).$

	Now, by the induction hypothesis, we obtain
	\begin{eqnarray*}
	& & x^{j+1}a \cdot r \\
	& = & (x \cdot 1_R) \left( \sum_{k=0}^j (-1)^k q^{- \frac{k(k-1)}{2}} { j \choose k }_{q^{-1}} (x \cdot 1_R)^{j-k}(g^{k}a \cdot r)(x \cdot 1_R)^k \right) \\
	& & \!   -  \left(\!\! q^{-j} \sum_{k=0}^j (-1)^k q^{- \frac{k(k-1)}{2}} { j \choose k }_{q^{-1}} \! \!\! \! \! (x \cdot 1_R)^{j-k}(g^{k+1}a \cdot r)(x \cdot 1_R)^k \!\!\right)\!\!(x \cdot 1_R) \\
	& = &  (x \cdot 1_R)^{j+1}(a \cdot r) \\
	& & + \sum_{k=1}^j (-1)^k q^{- \frac{k(k-1)}{2}} { j \choose k }_{q^{-1}} (x \cdot 1_R)^{j+1-k}(g^{k}a \cdot r)(x \cdot 1_R)^k \\
	& & - q^{-j} \sum_{k=0}^{j-1} (-1)^k q^{- \frac{k(k-1)}{2}} { j \choose k }_{q^{-1}} (x \cdot 1_R)^{j-k}(g^{k+1}a \cdot r)(x \cdot 1_R)^{k+1} \\
	& & - q^{-j} (-1)^j q^{-\frac{j(j-1)}{2}} (g^{j+1}a \cdot r) (x \cdot 1_R)^{j+1}  \\
	& = &  (x \cdot 1_R)^{j+1}(a \cdot r) \\
	& & + \sum_{k=1}^j (-1)^k q^{- \frac{k(k-1)}{2}} \left({ j \choose k }_{q^{-1}} \!\!\!\!\!+ q^{-j+(k-1)} { j \choose k-1 }_{q^{-1}} \right)(x \cdot 1_R)^{j+1-k} \\
&	& \times  (g^{k}a \cdot r)(x \cdot 1_R)^k + (-1)^{j+1} q^{-\frac{j(j+1)}{2}} (g^{j+1}a \cdot r) (x \cdot 1_R)^{j+1}  \\
	& \stackrel{\eqref{6.7}}{=} &  (x \cdot 1_R)^{j+1}(a \cdot r) \\
&	& +  \sum_{k=1}^j (-1)^k q^{- \frac{k(k-1)}{2}} { j + 1 \choose k }_{q^{-1}} (x \cdot 1_R)^{j+1-k}(g^{k}a \cdot r)(x \cdot 1_R)^k \\
&	& +  (-1)^{j+1} q^{-\frac{j(j+1)}{2}} (g^{j+1}a \cdot r) (x \cdot 1_R)^{j+1} 
	\end{eqnarray*}
	that is,
	\begin{eqnarray*}
	x^{j+1}a \cdot r 
	& = &  \sum_{k=0}^{j+1} (-1)^k q^{- \frac{k(k-1)}{2}} { j + 1 \choose k }_{q^{-1}} (x \cdot 1_R)^{j+1-k}(g^{k}a \cdot r)(x \cdot 1_R)^k.
	\end{eqnarray*}
	It concludes the induction step, and so expression \eqref{formula} holds.
\end{proof}

\begin{cor}\label{formula_eh_acao_parcial}
    If $\mathcal{H}=A[x,\sigma]$, then Theorem \ref{formula_eh_acao_parcial_geral} characterizes via \eqref{formula} all the partial actions of $A[x,\sigma]$ on $R$ such that $g \cdot 1_R=0$.
\end{cor}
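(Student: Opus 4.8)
The plan is to observe that the only feature distinguishing the Hopf-Ore extension $A[x,\sigma]$ from a general Hopf algebra $\mathcal{H}$ containing $A$ and a $(1,g)$-primitive $x$ is that the elements $x^j a$ (with $j \in \N_0$, $a \in A$) linearly span the whole algebra. Granting this, the corollary reduces to a $\k$-linearity argument layered on top of Theorem \ref{formula_eh_acao_parcial_geral}.

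First I would record that $A[x,\sigma] = A[x,\sigma,0]$ meets the hypotheses of Theorem \ref{formula_eh_acao_parcial_geral}. Indeed, $x$ is $(1,g)$-primitive by the Panov construction, and evaluating the Hopf-Ore relation \eqref{prod_ore} at $a = g$ together with $\sigma(g) = \chi(g)\,g$ (which follows from Theorem \ref{teo_panov}(i) and $\Delta(g) = g \o g$, and is also recorded in Remark \ref{remark_cond_hopf ore}) yields $xg = \chi(g)\,gx$, that is, $gx = q^{-1}xg$ with $q = \chi(g)$. Since we assume $g \cdot 1_R = 0$, Theorem \ref{formula_eh_acao_parcial_geral} applies and gives formula \eqref{formula} for every value $x^j a \cdot r$ with $a \in A$.

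Next I would exploit the Ore normal form. Every element of $A[x,\sigma]$ is uniquely of the shape $\sum_{i \geq 0} a_i x^i$ with $a_i \in A$; since $\sigma$ is an automorphism and $\delta = 0$, the relation \eqref{prod_ore} specializes (see Remark \ref{remark_cond_hopf ore}) to $ax = x\s(a)$, whence $a x^i = x^i \sigma^{-i}(a)$ for all $i$ by an immediate induction. Consequently $\sum_i a_i x^i = \sum_i x^i \sigma^{-i}(a_i)$, so the family $\{x^j a : j \in \N_0,\, a \in A\}$ spans $A[x,\sigma]$ over $\k$. By $\k$-linearity of the partial action map $\cdot$ in its first tensor factor, the value of $\left(\sum_i x^i b_i\right)\cdot r$ is determined by the individual terms $x^i b_i \cdot r$, each of which is prescribed by \eqref{formula}. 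Therefore the partial action on all of $A[x,\sigma]$ is completely determined by the partial action of $A$ on $R$ together with the single element $x \cdot 1_R$, which is exactly the asserted characterization.

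There is essentially no obstacle beyond this bookkeeping; the one point requiring care is the passage $a x^i = x^i \sigma^{-i}(a)$, which is precisely where the invertibility of $\sigma$ and the vanishing of $\delta$ enter. This is also what fails for a nonzero $\delta$ (where moving $x$ past $a$ produces lower-order $A$-correction terms) or for a general ambient Hopf algebra $\mathcal{H}$ (where the $x^j a$ need not span), explaining why this clean characterization is special to $A[x,\sigma]$ under the assumption $g \cdot 1_R = 0$.
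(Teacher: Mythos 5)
Your proposal is correct and matches the paper's (implicit) reasoning: the corollary is stated without proof precisely because, as you show, $x$ is $(1,g)$-primitive with $gx=\chi(g)^{-1}xg$ so Theorem \ref{formula_eh_acao_parcial_geral} applies, and the Ore normal form $\sum_i a_i x^i = \sum_i x^i\sigma^{-i}(a_i)$ shows the elements $x^j a$ span $A[x,\sigma]$, so \eqref{formula} determines the whole partial action from its restriction to $A$ and the element $x\cdot 1_R$. Your closing remark on why this fails for $\delta\neq 0$ or a general ambient $\mathcal{H}$ is a correct and useful clarification, consistent with the paper's treatment of the $\delta\neq 0$ case via the separate trivial-extension proposition.
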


The next result gives us a necessary condition for some partial actions of $A[x,\sigma]$ on $R$.
In \S \ref{Sec_Examples}, we see that such condition is sufficient for certain cases.

\begin{cor}\label{cor_ida}
       Let $\cdot : A[x,\sigma] \otimes R \to R$ be a partial action such that $g \cdot 1_R =0$.
    Then $a \cdot (x \cdot r) =(a_1 \cdot 1_R)(x \cdot 1_R) (\sigma^{-1}(a_2) \cdot r)$, for all $a \in A, r \in R$.
\end{cor}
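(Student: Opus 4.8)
The plan is to expand $a \cdot (x \cdot r)$ using the partial-action axioms together with the Ore relation, reduce everything to the already-established $j=1$ instance of formula \eqref{formula}, and then dispose of the single ``unwanted'' term using $g \cdot 1_R = 0$.

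First I would apply (PA.3) with $h = a$ and $k = x$ to write $a \cdot (x \cdot r) = (a_1 \cdot 1_R)(a_2 x \cdot r)$. Since $\delta = 0$, the relation \eqref{prod_ore} gives $ax = x\sigma^{-1}(a)$ (see Remark \ref{remark_cond_hopf ore}), so $a_2 x \cdot r = x\sigma^{-1}(a_2) \cdot r$. Applying the $j=1$ case of \eqref{formula}, established inside the proof of Theorem \ref{formula_eh_acao_parcial_geral} as $xc \cdot r = (x \cdot 1_R)(c \cdot r) - (gc \cdot r)(x \cdot 1_R)$, with $c = \sigma^{-1}(a_2)$, I obtain
\[
a \cdot (x \cdot r) = (a_1 \cdot 1_R)(x \cdot 1_R)(\sigma^{-1}(a_2) \cdot r) - (a_1 \cdot 1_R)(g\sigma^{-1}(a_2) \cdot r)(x \cdot 1_R).
\]
The first summand is exactly the claimed expression, so it remains to prove that the second summand vanishes.

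For the vanishing I would first record that $g \cdot 1_R = 0$ forces $g \cdot r = (g \cdot r)(g \cdot 1_R) = 0$ for all $r$, as in the proof of Lemma \ref{acao_produto_de_skew_primitivos_nichols}. Next, using $\sigma^{-1}(a_2) = g^{-1}a_2 g\,\chi^{-1}(a_3)$ from Remark \ref{remark_cond_hopf ore} and coassociativity, I rewrite $g\sigma^{-1}(a_2) = a_2 g\,\chi^{-1}(a_3)$, so the unwanted term equals $\bigl(\sum (a_1 \cdot 1_R)(a_2 g \cdot r)\chi^{-1}(a_3)\bigr)(x \cdot 1_R)$. Setting $b = (\mathrm{id}\otimes \chi^{-1})\Delta(a) = \sum a_1\chi^{-1}(a_2)$, coassociativity yields $\Delta(b) = \sum a_1 \otimes a_2\chi^{-1}(a_3)$, whence by (PA.3) the bracketed sum equals $b \cdot (g \cdot r) = b \cdot 0 = 0$. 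This kills the second summand and concludes the proof.

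The main obstacle is this last step: the unwanted term is not visibly zero, and making it vanish requires translating the $\chi^{-1}$-twisted three-fold coproduct produced by $\sigma^{-1}$ into the honest coproduct of a genuine element $b$, so that (PA.3) can be read in reverse. The crux is thus the bookkeeping of the iterated coproduct together with the identity $\sigma^{-1}(a) = g^{-1}a_1 g\,\chi^{-1}(a_2)$; once that rewriting is in place, the conclusion follows immediately from $g \cdot 1_R = 0$.
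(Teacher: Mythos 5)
Your proof is correct, and while it shares the paper's overall skeleton (reduce to the $j=1$ instance of \eqref{formula} and kill the leftover term using $g \cdot 1_R = 0$), it differs in two genuine ways. First, the expansion order: the paper never applies (PA.3) directly to $a \cdot (x \cdot r)$; it first rewrites $x \cdot r = (x \cdot 1_R)r$ via Lemma \ref{acao_produto_de_skew_primitivos_nichols}, expands with (PA.2), and then works entirely with $\cdot\, 1_R$-identities on the middle factor $a_2 x \cdot 1_R$, recovering general $r$ only at the very end through the contraction $(\sigma^{-1}(a_2) \cdot 1_R)(a_3 \cdot r) = \sigma^{-1}(a_2) \cdot r$; you instead hit $a \cdot (x \cdot r)$ with (PA.3) at once and apply the $j=1$ formula with arbitrary $r$, which shortens the computation. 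Second, and more substantively, the vanishing device: the paper precomputes $(g\sigma^{-1}(a_1)g^{-1} \cdot 1_R)(g a_2 \cdot 1_R) = g\sigma^{-1}(a)g^{-1} \cdot (g \cdot 1_R) = 0$, matching the two Sweedler legs via the conjugated element $g\sigma^{-1}(a)g^{-1}$ and the coproduct identities of Remark \ref{remark_cond_hopf ore}, whereas you absorb the scalar $\chi^{-1}(a_3)$ into the element $b = (\mathrm{id} \otimes \chi^{-1})\Delta(a)$, check $\Delta(b) = a_1 \otimes a_2\chi^{-1}(a_3)$ by coassociativity, and read (PA.3) backwards to identify the unwanted sum with $b \cdot (g \cdot r) = 0$, using that $g \cdot 1_R = 0$ forces $g \cdot R = 0$. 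Both devices are instances of the same idea — repackage the offending sum as something acting on $g \cdot (\text{--})$ — but your $b$-trick avoids the conjugation bookkeeping and works for general $r$ in one stroke, at the price of introducing the convolution-twisted element; the paper's version stays closer to the identities of Remark \ref{remark_cond_hopf ore} that it reuses elsewhere (e.g. in Remarks \ref{cond_ida2} and \ref{cond_ida}). Every ingredient you invoke is available: $ax = x\sigma^{-1}(a)$ and $\sigma^{-1}(a) = g^{-1}a_1g\,\chi^{-1}(a_2)$ are in Remark \ref{remark_cond_hopf ore}, and the applicability of \eqref{formula} to $A[x,\sigma]$ with $q = \chi(g)$ is exactly Corollary \ref{formula_eh_acao_parcial}, so there is no gap.
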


\begin{proof}
    Since
    \begin{eqnarray*}
 & & (g \sigma^{-1}(a_1) g^{-1} \cdot 1_R) (g a_2 \cdot 1_R) \\
 & = & (g \sigma^{-1}(a_1) g^{-1} \cdot 1_R) (g a_2 g^{-1}g \cdot 1_R) \\
 & \stackrel{\ref{remark_cond_hopf ore}}{=} & ((g \sigma^{-1}(a)g^{-1})_1 \cdot 1_R))((g \sigma^{-1}(a) g^{-1})_2 g \cdot 1_R) \\
& = & g \sigma^{-1}(a) g^{-1} \cdot 
 ( g \cdot 1_R) \\
& = & 0,
    \end{eqnarray*}
   we get 
    \begin{eqnarray*}
         && a \cdot (x \cdot r) \\
         &=& a \cdot ((x \cdot 1_R)r)\\
       & = & \left(a_1 \cdot (x \cdot 1_R)\right)(a_2 \cdot r) \\
        & = & (a_1 \cdot 1_R) (a_2x \cdot 1_R)(a_3 \cdot r) \\
        & = & (a_1 \cdot 1_R) (x\sigma^{-1}(a_2) \cdot 1_R)(a_3 \cdot r)\\
        & \stackrel{\ref{remark_cond_hopf ore}}{=}  & (g \sigma^{-1}(a_1) g^{-1} \cdot 1_R) (x a_2 \cdot 1_R)(a_3 \cdot r) \\
          & \stackrel{(\ref{formula})}{=} & (g \sigma^{-1}(a_1) g^{-1} \cdot 1_R) \left((x \cdot 1_R) (a_2 \cdot 1_R) - (g a_2 \cdot 1_R)(x\cdot 1_R)\right)(a_3 \cdot r) \\
          & =& (g \sigma^{-1}(a_1) g^{-1} \cdot 1_R) (x \cdot 1_R) (a_2 \cdot 1_R)(a_3 \cdot r) \\
          & & - (g \sigma^{-1}(a_1) g^{-1} \cdot 1_R) (g a_2 \cdot 1_R)(x\cdot 1_R)(a_3 \cdot r)\\
           &=& (g \sigma^{-1}(a_1) g^{-1} \cdot 1_R) (x \cdot 1_R)( a_2 \cdot 1_R) (a_3 \cdot r)
          \\
          & \stackrel{\ref{remark_cond_hopf ore}}{=} & (a_1 \cdot 1_R)(x \cdot 1_R) (\sigma^{-1}(a_2) \cdot 1_R) (a_3 \cdot r)
          \\
           &=& (a_1 \cdot 1_R)(x \cdot 1_R) (\sigma^{-1}(a_2) \cdot r).
    \end{eqnarray*}
    Therefore, $a \cdot (x \cdot r) =(a_1 \cdot 1_R)(x \cdot 1_R) (\sigma^{-1}(a_2) \cdot r)$.
\end{proof}

\medbreak

In some situations the map \eqref{formula} can be improved due to $q$-combinatoric properties, as in the next result.

\begin{lem}
     \label{lemma_truncamento}
Let $\cdot: A \otimes R \to R$ be a linear map, $w \in R$ and $q \in \Bbbk^\times$.
Consider a Hopf-Ore extension of $A$ and the linear map $\triangleright : A [x, \sigma] \o R \to R$ defined as 
\begin{align*}
x^ja \triangleright r =  \sum_{k=0}^j (-1)^k q^{ -\frac{k(k-1)}{2}} { j \choose k }_{q^{-1}} w^{j-k}(g^{k}a\cdot r) w^k.
\end{align*}
If $q \in \mathbb{G}'_M$, then for all $j \in \N_0, a\in A, r \in R$,
\begin{itemize}
    \item[(i)] $x^ja \triangleright r = \sum_{\ell = 0}^{j_D} (-1)^{\ell} \binom{j_D}{\ell}  w^{M(j_D-\ell)}\left(x^{j_R}g^{M\ell} a \triangleright r \right)w^{M \ell},$
\item[(ii)] $x^M \triangleright 1_R = \left(1_R - (g^M\cdot 1_R)\right) w^M,$
\item[(iii)] if $g^M =1$ and $w^M \in Z(R)$, then $x^ja \triangleright r = 0$, for $j\geq M$.
\end{itemize}
\end{lem}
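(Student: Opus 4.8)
The plan is to exploit the Radford factorization \eqref{radford} of the $q$-binomial coefficients when $q$ is a primitive $M$-th root of unity, and then to read off each part from the structure of the sum defining $x^j a \triangleright r$. For part (i), I would start from the definition
\begin{align*}
x^ja \triangleright r =  \sum_{k=0}^j (-1)^k q^{ -\frac{k(k-1)}{2}} \binom{j}{k}_{q^{-1}} w^{j-k}(g^{k}a\cdot r) w^k,
\end{align*}
and substitute $\binom{j}{k}_{q^{-1}} = \binom{j_R}{k_R}_{q^{-1}}\binom{j_D}{k_D}$ via \eqref{radford} (applied with $q^{-1}$, which is also a primitive $M$-th root of unity). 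The key combinatorial observation is that $\binom{j_R}{k_R}_{q^{-1}}$ vanishes unless $k_R \le j_R$, and that the ordinary binomial $\binom{j_D}{k_D}$ forces $k_D \le j_D$; writing $k = k_D M + k_R$ and reindexing the double sum over $(k_D, k_R)$ should reorganize the sum into the claimed form. Concretely, I expect the terms with fixed $k_D = \ell$ to assemble into a copy of $x^{j_R} g^{M\ell} a \triangleright r$ up to the factor $w^{M(j_D-\ell)}(\cdots)w^{M\ell}$, once I verify that the sign and power-of-$q$ bookkeeping collapses correctly — in particular that $q^{-k(k-1)/2}$ restricted to the residue $k_R$ reproduces the sign $(-1)^\ell$ attached to $\binom{j_D}{\ell}$ after using $q^M = 1$.

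For part (ii), I would specialize (i) with $j = M$, $a = 1$, $r = 1_R$. Here $M_D = 1$ and $M_R = 0$, so the outer sum over $\ell$ runs over $\{0,1\}$, giving $x^M \triangleright 1_R = w^M(x^0 \cdot 1_R) - w^0(x^0 g^M \cdot 1_R)w^M = w^M - (g^M \cdot 1_R)w^M$. Since $\binom{0}{0}_{q^{-1}} = 1$ and $x^0 a \triangleright r = a \cdot r$ by definition of the map, this yields $x^M \triangleright 1_R = (1_R - (g^M \cdot 1_R))w^M$ directly. I would double-check that the $\ell=0$ term indeed produces $w^M \cdot 1_R$ with the correct sign and that no stray $q$-powers survive, which should be automatic from $q^M=1$.

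For part (iii), the plan is to use (i) under the hypotheses $g^M = 1$ and $w^M \in Z(R)$. Centrality of $w^M$ lets me pull the right-hand factor $w^{M\ell}$ past the middle and combine it with $w^{M(j_D-\ell)}$ into $w^{M j_D}$, which factors out of the whole sum; and $g^M = 1$ collapses $g^{M\ell}$ to the identity so that every inner term becomes the same, $x^{j_R}a \triangleright r$. What remains is the scalar sum $\sum_{\ell=0}^{j_D}(-1)^\ell \binom{j_D}{\ell}$, which equals $(1-1)^{j_D} = 0$ whenever $j_D \ge 1$, i.e. whenever $j \ge M$. I would present this as a clean application of the binomial theorem.

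The main obstacle I anticipate is part (i): verifying that the sign/phase factor $(-1)^k q^{-k(k-1)/2}$ factors compatibly with the Radford decomposition, so that the $q^{-1}$-binomial part stays attached to the inner $x^{j_R}$-sum while the ordinary binomial part, carrying the sign $(-1)^\ell$, governs the outer $\ell$-sum. This requires checking an identity of the shape $q^{-k(k-1)/2} = q^{-k_R(k_R-1)/2}\cdot(\text{sign depending only on }\ell)$ modulo the relation $q^M = 1$; the cross term from $(k_D M + k_R)$ in the exponent is where the computation is delicate, and I would verify it carefully, possibly splitting into cases according to the parity of $M$, before concluding. Parts (ii) and (iii) are then routine specializations.
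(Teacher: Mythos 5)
Your proposal is correct and follows essentially the same route as the paper: Radford's factorization \eqref{radford} applied to $q^{-1}$, reindexing $k = M\ell + k_R$ into a double sum for (i), then specializing for (ii) and factoring out $w^{Mj_D}$ against the vanishing alternating sum $\sum_{\ell}(-1)^\ell\binom{j_D}{\ell}$ for (iii). The delicate sign check you flagged does close as you anticipate, since $\binom{M\ell+k}{2} = \binom{M\ell}{2} + \binom{k}{2} + M\ell k$ together with $q^M=1$ gives $(-1)^{M\ell+k}q^{-\binom{M\ell}{2}} = (-1)^{\ell+k}$ in both parities of $M$, which is exactly the simplification used in the paper's computation.
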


\begin{proof}
Set $j = M j_D + j_R$ with $0\leq j_R < M$ and $a\in A, r \in R$.
For item (i), we have by \eqref{radford} that
\begin{eqnarray*}
x^ja \triangleright r &=& \sum_{k=0}^j  (-1)^k q^{-\frac{k(k-1)}{2}} { j \choose k }_{q^{-1}}  w^{j-k} (g^{k}a \cdot r )w^{k} \\
&=& \sum_{\ell=0}^{j_D}\sum_{k=0}^{j_R}  (-1)^{M \ell + k} q^{-\frac{(M \ell + k)(M \ell + k-1)}{2}}  { j_D \choose \ell } { j_R \choose k }_{q^{-1}} \\
& & \times w^{M j_D + j_R - (M\ell + k)} (g^{M \ell + k}a \cdot r) w^{M \ell + k} \\
&=& \sum_{\ell=0}^{j_D}\sum_{k=0}^{j_R}   (-1)^{\ell + k} q^{-\frac{k (k-1)}{2}} { j_D \choose \ell } { j_R \choose k }_{q^{-1}}  \\
& & \times w^{M (j_D - \ell) + j_R -  k} (g^{M \ell + k}a \cdot r) w^{M \ell + k} \\
&=& \sum_{\ell=0}^{j_D} 
 (-1)^{\ell} { j_D \choose \ell } w^{M(j_D-\ell)} \\
 & & \times \left( \sum_{k=0}^{j_R}    (-1)^{k} q^{-\frac{k (k-1)}{2}} { j_R \choose k }_{q^{-1}} w^{j_R - k} (g^k g^{M \ell}a \cdot r) w^{k}\right) w^{M \ell} \\
 &=&  \sum_{\ell=0}^{j_D} 
  (-1)^{\ell} { j_D \choose \ell } w^{M(j_D-\ell)} 
 \left(x^{j_R}g^{M\ell} a \triangleright r\right) w^{M\ell}.
\end{eqnarray*}
Item (ii) is immediate. For item (iii), assume $g^M =1$ and $w^M \in Z(R)$.
Then, for $j\geq M$,
\begin{eqnarray*}
x^ja \triangleright r &=& \sum_{\ell=0}^{j_D} 
  (-1)^{\ell} { j_D \choose \ell } w^{M(j_D-\ell)} 
 \left(x^{j_R}g^{M\ell} a \triangleright r\right) w^{M\ell} \\
 &= &\sum_{\ell=0}^{j_D} 
  (-1)^{\ell} { j_D \choose \ell } w^{M j_D} 
 \left(x^{j_R} a \triangleright r\right) \\
&=& w^{M j_D} (x^{j_R} a \triangleright r)  \sum_{\ell=0}^{j_D}   (-1)^{\ell}
{ j_D \choose \ell } \\
&=& 0.
\end{eqnarray*}
\end{proof}

\subsection{Sufficient conditions for extending a partial action to a Hopf-Ore extension}

In Theorem \ref{formula_eh_acao_parcial_geral}, we have an explicit expression for a partial action of a Hopf-Ore extension $A [x, \sigma]$ on $R$ when $g \cdot 1_R = 0$. 
Now, we investigate when a  partial action of $A$ on $R$, such that  $g \cdot 1_R = 0$, can be extended to a partial action of $A [x, \sigma]$ on $R$ through \eqref{formula}.

Observe that, from now on, we use the notation $\cdot$ for two different linear maps.
It does not create an ambiguity issue since the second map extends the first one.

\begin{thm}\label{formula_eh_acao_parcial_volta}
	Consider $\cdot : A \o R \to R$ a partial action and $A [x, \sigma]$ a Hopf-Ore extension.
 If $g \cdot 1_R = 0$, we define the linear map $\cdot : A [x, \sigma] \o R \to R$ by
\begin{align}\label{formula_ext}
	x^ja \cdot r =  \sum_{k=0}^j (-1)^k q^{ -\frac{k(k-1)}{2}} { j \choose k }_{q^{-1}} w^{j-k}(g^{k}a\cdot r) w^k,
	\end{align}
	for $q = \chi (g)$ and some fixed element $w\in R$. 
 Then, for all $a$, $b\in A$, $i, j \in \N_0$ and $r,s\in R$,
        \begin{itemize}
        \item[(i)] $1_H\cdot r=r$,
        \item[(ii)] $x^ja\cdot (rs)=((x^ja)_1\cdot r)((x^ja)_2\cdot s)$,
        \item[(iii)] if
         \begin{equation}\label{cod_volta}
      a\cdot (x^jb\cdot r)=(a_1\cdot 1_R)(a_2x^jb\cdot r),
  \end{equation}
then $x^ja\cdot(x^ib\cdot r)=((x^ja)_1\cdot 1_R)((x^ja)_2x^ib\cdot r),$ and
         \item[(iv)] if
 \begin{equation}\label{cod_voltasym}
   a\cdot (x^jb\cdot r)=(a_1x^jb\cdot r)(a_2\cdot 1_R),
  \end{equation}
then $x^ja\cdot(x^ib\cdot r)=((x^ja)_1x^ib\cdot r)((x^ja)_2\cdot 1_R).$
 \end{itemize}
 \end{thm}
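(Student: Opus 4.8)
The plan is to avoid expanding \eqref{formula_ext} into a nested sum and instead exploit the single recursion hidden in that closed form. Concretely, I would first record the purely formal identity
\[
x^{j+1}a\cdot r \;=\; w\,(x^{j}a\cdot r)-(gx^{j}a\cdot r)\,w ,
\]
valid for every $a\in A$, $r\in R$ and $j\in\N_0$. This is nothing but the induction step already carried out in the proof of Theorem \ref{formula_eh_acao_parcial_geral}: replacing $x\cdot 1_R$ by the symbol $w$, the computation there shows that once each factor on the right is written via \eqref{formula_ext}, the whole expression collapses to \eqref{formula_ext} for $x^{j+1}a$ by the $q$-Pascal rule \eqref{6.7}. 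Crucially, that manipulation uses no partial-action axiom, so the identity holds for the map we are now \emph{defining}; by linearity it extends to $(x\xi)\cdot r=w\,(\xi\cdot r)-(g\xi\cdot r)\,w$ for every $\xi\in A[x,\sigma]$, which is the form I will use throughout. Item (i) is then immediate: for $j=0$ the formula \eqref{formula_ext} reduces to $a\cdot r=(g^{0}a\cdot r)=a\cdot r$, so on $A$ the new map coincides with the given partial action and $1_{A[x,\sigma]}\cdot r=1_A\cdot r=r$ by (PA.1).

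For (ii) I would induct on $j$, the base $j=0$ being (PA.2) for the original partial action. For the step, apply $\Delta(x^{j+1}a)=(x\o 1+g\o x)\Delta(x^{j}a)$ to write $((x^{j+1}a)_1\cdot r)((x^{j+1}a)_2\cdot s)$ as a sum of terms of the two shapes $(x(x^{j}a)_1\cdot r)((x^{j}a)_2\cdot s)$ and $(g(x^{j}a)_1\cdot r)(x(x^{j}a)_2\cdot s)$. Applying the recursion $(x\xi)\cdot(-)=w\,(\xi\cdot(-))-(g\xi\cdot(-))\,w$ to the two factors carrying an extra $x$, the two ``middle'' contributions $\pm(g(x^{j}a)_1\cdot r)\,w\,((x^{j}a)_2\cdot s)$ cancel by telescoping, leaving $w\,Q_j-\big(\sum (g(x^{j}a)_1\cdot r)(g(x^{j}a)_2\cdot s)\big)w$, where $Q_j=((x^{j}a)_1\cdot r)((x^{j}a)_2\cdot s)$. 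Since $g$ is grouplike, $g(x^{j}a)_1\o g(x^{j}a)_2=\Delta(gx^{j}a)$, so the bracketed sum equals $gx^{j}a\cdot(rs)$ by the induction hypothesis applied to $ga$, while $Q_j=x^{j}a\cdot(rs)$ by the hypothesis for $a$. Comparing with the recursion $x^{j+1}a\cdot(rs)=w\,(x^{j}a\cdot(rs))-(gx^{j}a\cdot(rs))\,w$ closes the step.

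Items (iii) and (iv) follow the same template, and I would only write out (iii), since (iv) is its left--right mirror with \eqref{cod_voltasym} in place of \eqref{cod_volta}. Fix $b,i,r$ and induct on $j$; the base case $j=0$ is exactly the hypothesis \eqref{cod_volta}. Setting $u=x^{i}b\cdot r$, the recursion gives $x^{j+1}a\cdot u=w\,(x^{j}a\cdot u)-(gx^{j}a\cdot u)\,w$, and the induction hypothesis (for $a$ and for $ga$) rewrites the two summands as $((x^{j}a)_{1}\cdot 1_R)((x^{j}a)_{2}x^{i}b\cdot r)$ and $((gx^{j}a)_{1}\cdot 1_R)((gx^{j}a)_{2}x^{i}b\cdot r)$. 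On the other side, expanding $\Delta(x^{j+1}a)=(x\o 1+g\o x)\Delta(x^{j}a)$ and applying the recursion to the $x$-prefixed factors produces precisely the same telescoping cancellation as in (ii). The only new bookkeeping is that a factor $(x^{j}a)_2\,x^{i}b$ must be put in normal form; here I use that $\delta=0$ forces $a_2x^{i}=x^{i}\sigma^{-i}(a_2)$ by \eqref{prod_ore} and Remark \ref{remark_cond_hopf ore}, so that the recursion still applies to $x\big((x^{j}a)_2x^{i}b\big)$. Matching the surviving terms, again using $\Delta(gx^{j}a)=(g\o g)\Delta(x^{j}a)$ to identify $g(x^{j}a)_{(\ell)}$ with $(gx^{j}a)_{(\ell)}$, yields the assertion.

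I expect the main obstacle to be organizational rather than conceptual: keeping the Sweedler indices and the non-commuting factors $w$ and $(g^{k}a_{(\ell)}\cdot -)$ straight, and verifying the telescoping cancellation in full generality instead of merely on low-degree cases. The one genuinely delicate point is justifying the recursion as a \emph{formal} identity, independent of the partial-action axioms, so that it may legitimately be invoked while those very axioms are still being established; everything afterwards is a disciplined repetition of the same two-term cancellation. (A direct route expanding \eqref{formula_ext} and matching coefficients via \eqref{Lema2_2} and \eqref{lema_acao_da_acao} is also possible, but the inductive argument above keeps the non-commutativity under control far more transparently.)
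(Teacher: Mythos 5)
Your proposal is correct, but it takes a genuinely different route from the paper's proof. The paper argues by brute-force expansion: for (ii) it expands $\Delta(x^ja)$ via the $q$-binomial theorem, rewrites both sides of the identity as triple sums in powers of $w$, and matches the coefficient of each fixed term \eqref{eq_vetor} using the $q$-binomial identities \eqref{Lema2_2} and \eqref{lema_acao_da_acao}; items (iii) and (iv) repeat this coefficient-matching with the extra change of variables coming from $a_2x^i = x^i\sigma^{-i}(a_2)$. You instead isolate the formal recursion $(x\xi)\cdot r = w(\xi\cdot r) - (g\xi\cdot r)w$, $\xi\in A[x,\sigma]$, which indeed holds for the map \eqref{formula_ext} by the $q$-Pascal rule \eqref{6.7} alone (writing $gx^ja = q^{-j}x^j(ga)$, the computation is exactly the collapsing step in the proof of Theorem \ref{formula_eh_acao_parcial_geral}, and uses no partial-action axiom -- the one genuinely delicate point, which you correctly flag and justify), and then run a clean induction on $j$ in which the factorization $\Delta(x^{j+1}a)=(x\otimes 1+g\otimes x)\Delta(x^ja)$ together with $\Delta(gx^ja)=(g\otimes g)\Delta(x^ja)$ produces the two-term telescoping cancellation; the induction hypothesis, applied to both $a$ and $ga$ (legitimate by linearity, since $gx^ja=q^{-j}x^j(ga)$), closes each step, with base cases (PA.2) and \eqref{cod_volta}/\eqref{cod_voltasym}. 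What each approach buys: the paper's computation is self-contained at the level of closed formulas and exercises the combinatorial lemmas it has already set up, but it is heavy and must be redone three times; your argument replaces all uses of \eqref{Lema2_2} and \eqref{lema_acao_da_acao} by a single application of \eqref{6.7}, keeps the non-commuting factors $w$ and $(g^ka\cdot{-})$ under control structurally rather than by bookkeeping, and makes items (ii)--(iv) instances of one cancellation pattern -- at the modest cost of having to state and verify the recursion as an identity of the \emph{defined} linear map before any axiom is available. Your handling of the normal-form issue in (iii) (that $\xi_2x^ib$ lies in $\operatorname{span}\{x^mA\}$ via $\delta=0$ and Remark \ref{remark_cond_hopf ore}, so the recursion applies by linearity) is exactly the point that needed attention, and it is sound.
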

Therefore, if \eqref{cod_volta} holds, the linear map $\cdot$ is a partial action of $A [x, \sigma]$ on $R$. Moreover, it is symmetric if \eqref{cod_voltasym} is also satisfied.

\begin{proof} Clearly item $(i)$ holds by definition.

$(ii)$ Note that, as $\cdot$ is a partial action of $A$ on $R$,
\begin{eqnarray*}
x^ja \cdot (rs) &=& \sum_{k=0}^j (-1)^{k} q^{-\frac{k(k-1)}{2}} {j \choose k}_{q^{-1}} w^{j-k}  (g^{k}a_1\cdot r)(g^{k}a_2\cdot s) w^{k}.
\end{eqnarray*}
Thus, for $ k=k_0 \in\I_{0, j}$ fixed, we obtain the following element  
\begin{equation}\label{eq_vetor}
\left( (-1)^{k_0} q^{- \frac{k_0(k_0-1)}{2}} {j \choose k_0}_{q^{-1}} \right) \ w^{j-k_0} (g^{k_0}a_1 \cdot r) (g^{k_0}a_2 \cdot s) w^{k_0}.
\end{equation} 
On the other hand, 
\begin{eqnarray*}
& & ((x^ja)_1 \cdot r)((x^ja)_2 \cdot s) 
\\
& = &   \sum_{\ell =0}^j {j \choose \ell}_q  (g^{\ell}x^{j-\ell}a_1 \cdot r) ( x^\ell a_2 \cdot s) \\
&\stackrel{\ref{id_q}}{=} &   \sum_{\ell=0}^j q^{\ell(j-\ell)} {j \choose \ell}_{q^{-1}} q^{-\ell(j-\ell)} (x^{j-\ell} g^{\ell}a_1 \cdot r) ( x^{\ell}a_2 \cdot s)\\
&= & \sum_{\ell=0}^j  \sum_{r=0}^{j-\ell} \sum_{\theta=0}^{\ell} (-1)^{r+\theta} q^{-\frac{r(r-1)}{2}} q^{-\frac{\theta(\theta-1)}{2}} {j \choose \ell}_{q^{-1}} \!\!{j-\ell \choose r}_{q^{-1}} \!\!{\ell \choose \theta}_{q^{-1}}\\ 
& & \times   w^{j-(\ell+r)}  (g^{\ell+r}a_1\cdot r) w^{r+\ell-\theta} (g^{\theta}a_2\cdot s)w^{\theta}\\
& \stackrel{k=\ell+r}{=} & \sum_{\ell=0}^j  \sum_{k=\ell}^{j} \sum_{\theta=0}^{\ell}(-1)^{k-\ell+\theta} q^{-\frac{(k-\ell)(k-\ell-1)}{2}}\! q^{-\frac{\theta(\theta-1)}{2}} \!{j \choose \ell}_{q^{-1}} \!\!{j-\ell \choose k-\ell}_{q^{-1}} \!\!{\ell \choose \theta}_{q^{-1}} \\
& & \times  w^{j-k}  (g^{k}a_1\cdot r) w^{k-\theta} (g^{\theta}a_2\cdot s)w^{\theta}.
\end{eqnarray*}
Thus, using \eqref{Lema2_2} and for $ k_0 \in\I_{0, j}$ fixed, we have that each $ \ell \in\I_{0, j}$ determines the multiple of the fixed element \eqref{eq_vetor}(considering $k=k_0$ and $\theta=k_0$),
\begin{eqnarray*}  & & \sum_{\ell=0}^{j}  (-1)^{\ell} q^{-\frac{k_0(k_0-1)}{2}}q^{\ell k_0}q^{-\frac{\ell(\ell+1)}{2}}q^{-\frac{k_0(k_0-1)}{2}} {j \choose k_0}_{q^{-1}} {k_0 \choose \ell}_{q^{-1}} {\ell \choose k_0}_{q^{-1}} \\
&& \times     w^{j-k_0} (g^{k_0}a_1 \cdot r) (g^{k_0}a_2 \cdot s) w^{k_0}.
\end{eqnarray*}
Therefore, looking to \eqref{eq_vetor} and as $ k_0 \in\I_{0, j}$, we just need to see that
\begin{eqnarray*}	
\sum_{\ell=0}^{k_0}  (-1)^{\ell-k_0} q^{-\frac{k_0(k_0-1)}{2}} q^{\ell k_0}q^{-\frac{\ell(\ell+1)}{2}}  {k_0 \choose \ell}_{q^{-1}} {\ell \choose k_0}_{q^{-1}}&=&1,
\end{eqnarray*}
but it holds by \eqref{lema_acao_da_acao}.

$(iii)$ Suppose $a\cdot (x^jb\cdot r)=(a_1\cdot 1_R)(a_2x^jb\cdot r)$. Then,
\begin{eqnarray*}
& & x^ja\cdot(x^ib\cdot r) \\
&= & \sum_{k=0}^j (-1)^{k} q^{-\frac{k(k-1)}{2}} {j \choose k}_{q^{-1}} w^{j-k}  (g^{k}a_1\cdot 1_R)(g^{k}a_2x^ib\cdot r) w^{k}\\
& = &  \sum_{k=0}^j (-1)^{k} q^{-\frac{k(k-1)}{2}} {j \choose k}_{q^{-1}} w^{j-k}  (g^{k}a_1\cdot 1_R)(x^i\sigma^{-i}(g^{k}a_2)b\cdot r) w^{k}\\
& =  &  \sum_{k=0}^j \sum_{\ell=0}^i (-1)^{k+\ell} q^{-\frac{k(k-1)}{2}}q^{-\frac{\ell(\ell-1)}{2}} {j \choose k}_{q^{-1}}{i \choose \ell}_{q^{-1}} w^{j-k}  (g^{k}a_1\cdot 1_R) w^{i-\ell} \\
& & \times \ (g^{\ell}\sigma^{-i}(g^{k}a_2)b\cdot r) w^{\ell+k}.
\end{eqnarray*}
Thus, fixing $ k=k_0 \in\I_{0, j} $ and $\ell=\ell_{0}\in\I_{0, i}$, we have

\begin{align}\label{eq_vetor2}
\begin{split}
& (-1)^{k_0+\ell_0} q^{-\frac{k_0(k_0-1)}{2}}q^{-\frac{\ell_0(\ell_0-1)}{2}} {j \choose k_0}_{q^{-1}}{i \choose \ell_0}_{q^{-1}} \\
\times &  w^{j-k_0}  (g^{k_0}a_1\cdot 1_R)w^{i-\ell_0} (g^{\ell_0}\sigma^{-i}(g^{k_0}a_2)b\cdot r) w^{\ell_0+k_0}.
\end{split}
\end{align}

On the other hand, using the same computations of $(ii)$,
\begin{eqnarray*}
&& ((x^ja)_1\cdot 1_R)((x^ja)_2x^ib\cdot r) \\
& =&  \sum_{\ell=0}^j {j \choose \ell}_{q^{-1}} (x^{j-\ell} g^{\ell}a_1 \cdot 1_R) ( x^{i+\ell}\sigma^{-i}(a_2)b \cdot r)\\
&= & \sum_{\ell=0}^j  \sum_{k=\ell}^{j} \sum_{\theta=0}^{i+\ell}(-1)^{k-\ell+\theta} q^{-\frac{(k-\ell)(k-\ell-1)}{2}} q^{-\frac{\theta(\theta-1)}{2}} \!{j \choose \ell}_{q^{-1}}\!\! {j-\ell \choose k-\ell}_{q^{-1}} \!\!{i+\ell \choose \theta}_{q^{-1}} \\
&& \times  \ w^{j-k}  (g^{k}a_1\cdot 1_R) w^{k+i-\theta} (g^{\theta}\sigma^{-i}(a_2)b\cdot r)w^{\theta}.
\end{eqnarray*}
Then, using \eqref{Lema2_2} and for $ k_0 \in\I_{0, j}$ fixed, we have that each $ \ell \in\I_{0, j}$ we can choose $k=k_0$ and $\theta=k_0+\ell_0$, obtaining the element
\begin{eqnarray*} && \!\!\!\! \sum_{\ell=0}^{j}  (-1)^{\ell_0-\ell}  q^{-\frac{k_0(k_0-1)}{2}}q^{\ell k_0-k_0\ell_0}q^{-\frac{\ell(\ell+1)}{2}}q^{-\frac{k_0(k_0-1)}{2}} q^{-\frac{\ell_0(\ell_0-1)}{2}}  \!{j \choose k_0}_{q^{-1}} \!\!{k_0 \choose \ell}_{q^{-1}} \\
&& \times  \ {i+\ell \choose k_0+\ell_0}_{q^{-1}} q^{ik_0}  w^{j-k_0} (g^{k_0}a_1 \cdot 1_R) w^{i-\ell_0} (g^{\ell_0}\sigma^{-i}(g^{k_0}a_2)b\cdot r) w^{k_0+\ell_0}.
\end{eqnarray*}

Therefore, by \eqref{eq_vetor2}, we just need to see that
\begin{eqnarray*} \sum_{\ell=0}^{k_0}  (-1)^{\ell+k_0} q^{\ell k_0-k_0\ell_0}q^{-\frac{\ell(\ell+1)}{2}}q^{-\frac{k_0(k_0-1)}{2}} q^{ik_0} \!{k_0 \choose \ell}_{q^{-1}} \!\!{i+\ell \choose k_0+\ell_0}_{q^{-1}}= {i \choose \ell_0}_{q^{-1}}, 
\end{eqnarray*}

which is true again for \eqref{lema_acao_da_acao}.

$(iv)$ Analogously to item (iii).
\end{proof}

\begin{rem}\label{remark_cod}
     Notice that $w=x\cdot 1_R$ within Theorem \ref{formula_eh_acao_parcial_volta}. Moreover, using \eqref{formula_ext}, equalities \eqref{cod_volta} and  \eqref{cod_voltasym} can be rewritten, respectively, as
    \begin{align}
        \begin{split}\label{somatorio_acao}
     & \sum_{k=0}^j (-1)^k q^{ -\frac{k(k-1)}{2}} { j \choose k }_{q^{-1}} (a_1\cdot w^{j-k})(a_2g^{k}b\cdot r)(a_3\cdot w^k)\\
    = & \sum_{k=0}^j (-1)^k q^{ -\frac{k(k-1)}{2}} { j \choose k }_{q^{-1}} (a_1\cdot 1_R) w^{j-k}(g^{k}\sigma^{-j}(a_2)b\cdot r)w^k
    \end{split} 
         \end{align}
and
      \begin{align}
\begin{split}\label{somatorio_acao_simetrica}
     &\sum_{k=0}^j (-1)^k q^{ -\frac{k(k-1)}{2}} { j \choose k }_{q^{-1}} (a_1\cdot w^{j-k})(a_2g^{k}b\cdot r)(a_3\cdot w^k)\\
     = & \sum_{k=0}^j (-1)^k q^{ -\frac{k(k-1)}{2}} { j \choose k }_{q^{-1}} w^{j-k}(g^{k}\sigma^{-j}(a_1)b\cdot r)w^k (a_2\cdot 1_R).
     \end{split}
    \end{align} 
\end{rem}

\medbreak

In order to prove \eqref{somatorio_acao}, obviously a sufficient condition is
\begin{align}\label{igualdade_pontual}
(a_1\cdot w^{j-k})(a_2g^{k}b\cdot r)(a_3\cdot w^k) = (a_1\cdot 1_R) w^{j-k}(g^{k}\sigma^{-j}(a_2)b\cdot r)w^k
\end{align}
for all $a, b \in A , r\in R$ and $0 \leq k \leq j$.
But, we emphasize that the above condition is not necessary, \emph{i.e.}, equality \eqref{somatorio_acao} can hold even if \eqref{igualdade_pontual} does not.

For instance, consider Sweedler's Hopf algebra $\mathbb{H}_4=\langle g, x \ | \ g^2 =1, x^2 =0, xg=-gx \rangle$, where $g \in G(\mathbb{H}_4)$ and $x \in P_{1,g}(\mathbb{H}_4)$, Hopf-Ore extension $\mathbb{H}_4[y,\sigma]$, where $\sigma(g)=-g$ and $\sigma(x)=-x$, and the partial action of $\mathbb{H}_4$ on an algebra $R$ given by $1 \cdot r=r$, $g \cdot r=0$, $x \cdot r = \Omega r$ and $g x \cdot r = r \Omega$, for some $\Omega \in R$ such that $\Omega^2 \in Z(R)$ (see \cite[Ex. 3.13]{FMS2}).
If $y \cdot 1_R=w$, then for $j=1, a=gx$ and $b=g$, on the one hand
$$
 (a_1\cdot w^{1-k})(a_2g^{k}b\cdot r)(a_3\cdot w^k)= w^{1-k} (g^{k+1} \cdot r) w^k \Omega 
$$
and, on the other hand
$$
(a_1\cdot 1_R) w^{1-k}(g^{k}\sigma^{-1}(a_2)b\cdot r)w^k= - \Omega w^{1-k} (g^{k} \cdot r) w^k - w^{1-k} (g^{k}x \cdot r) w^k. 
$$
Thus, for \eqref{igualdade_pontual} to be true for $k=0$ and $k=1$, one needs that $\Omega w + w \Omega = 0$. 
But, for equality \eqref{somatorio_acao} to be valid, one just needs that $\Omega w + w \Omega \in Z(R)$. 

In \S \ref{Sec_Examples}, we see that condition $\Omega w + w \Omega \in Z(R)$ actually is the necessary and sufficient condition to extend the partial action of $\mathbb{H}_4$ on $R$ to and partial action of $\mathbb{H}_4[y,\sigma]$ on $R$.
Furthermore, we prove that \eqref{somatorio_acao} and \eqref{igualdade_pontual} are equivalents when $A=\Bbbk G$; the same happens for a general $A$
if $g \in Z(A)$ and $w \in Z(R)$.

\subsection{Partial actions of Hopf-Ore extensions factorized through quotients} \label{Sec_quotient}

Let $\mathcal{H}$ be a Hopf algebra and $I$ a Hopf ideal of $\mathcal{H}$.
In general, we have that if $\cdot : \mathcal{H} \o R \to R$ is a (symmetric) partial action such that $I\cdot R=0$, then the map $\cdot$ induces a (symmetric) partial action of $\mathcal{H}/I$ on $R$.
Conversely, if there is a (symmetric) partial action of $\mathcal{H}/I$ on $R$, then  this action is induced from one of $\mathcal{H}$ on $R$.

In the previous section we determine conditions for extending a partial action of a Hopf algebra $A$ on $R$ to a partial action $\cdot: A[x,\sigma, \delta]\o R\to R$.
Then, if there is a Hopf ideal $I$ of $A[x,\sigma, \delta]$ such that $I \cdot R=0$, we produce partial actions of $A[x,\sigma, \delta]/I$ on $R$ from partial actions of $A$ on $R$. 
Next, we investigate when such procedure is exhaustive.

Let $I$ be a Hopf ideal of $A[x,\sigma, \delta]$ and $\triangleright:A[x,\sigma,\delta]/I \o R \to R$ a (symmetric) partial action.
Then $\triangleright$ defines  two (symmetric) partial actions $\cdot: A\otimes R\to R $ and
$\bullet: A[x,\sigma,\delta] \o R \to R$, via $a\cdot r=\overline{a}\triangleright r$ and $x^ja\bullet r=\overline{x^ja}\triangleright r$, respectively.

If $\overline{g} \triangleright 1_R = 1_R$, by Proposition \ref{parte_global}, the (symmetric) partial action $\bullet$ coincides with the extension of the (symmetric) partial action $\cdot$ to the Hopf Ore extension. Now, if $\overline{g} \triangleright 1_R = 0$ and $\delta=0$, the (symmetric) partial action $\bullet$ coincides with the extension of the (symmetric) partial action $\cdot$ to the Hopf Ore extension, putting $w=\overline{x} \triangleright 1_R$, since that the map $\cdot$ satisfies the equalities \eqref{somatorio_acao} (and \eqref{somatorio_acao_simetrica}).

Thus, in both cases $I\bullet R=0$, what induces a (symmetric) partial action of $A[x,\sigma,\delta]/I$, which turns out to be the original one $\triangleright$, \emph{i.e.}, for these cases such procedure is exhaustive.

\medbreak

To finish this section, we determine sufficient conditions to induce a partial action of $A[x, \sigma]$ to a partial action of the quotient $A[x, \sigma]/I$, for two specific families of Hopf ideals $I$.
Such results allow us to compute in \S \ref{Sec_Rank_one} some (symmetric)  partial actions of the rank one Hopf algebras.

\begin{prop} \label{prop:rank_one_nilp}
Let $A[x, \sigma]$ be a Hopf-Ore extension  and $q = \chi(g) \in \mathbb{G}'_d$.
If $\cdot: A[x, \sigma] \otimes R\to R$ is a partial action such that:
\begin{itemize}
    \item[(i)] $g\cdot 1_R = 1_R$ and the map $x\cdot \underline{ \ \ }  $ is $d$-nilpotent, or
    \item[(ii)] $g\cdot 1_R = 0$ and  $ x\cdot 1_R = 0$, or
    \item[(iii)] $g\cdot 1_R = 0, g^d\cdot 1_R = 1_R$ and $(x\cdot 1_R)^d \, r = (g^d\cdot r)(x\cdot 1_R)^d $, for all $r\in R$,
\end{itemize}
then $\cdot{|_{I\otimes R}} =0$, where $I$  is the (Hopf) ideal $ \langle x^d \rangle$.
In particular, the partial action $\cdot$ induces a partial action of $A[x, \sigma]/I$ on $R$.
\end{prop}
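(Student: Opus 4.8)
The plan is to reduce $\cdot|_{I\otimes R}=0$ to a single family of pointwise vanishings and then treat the three hypotheses in turn. Since $\delta=0$, relation \eqref{prod_ore} gives $x^d a=\sigma^d(a)x^d$, so $x^d$ is normal: $x^dA=Ax^d$ and $I=\langle x^d\rangle=\bigoplus_{m\ge d}Ax^m$. Writing a homogeneous element as $ax^m=x^m\sigma^{-m}(a)$, we see that $\cdot|_{I\otimes R}=0$ is equivalent to $x^ma\cdot r=0$ for all $m\ge d$, $a\in A$ and $r\in R$. Granting this, the final clause (that $\cdot$ descends to $A[x,\sigma]/I$) is immediate from the general principle recalled at the start of \S\ref{Sec_quotient}, using that $\langle x^d\rangle$ is a Hopf ideal; indeed $q=\chi(g)\in\mathbb{G}'_d$ forces ${d \choose k}_{q^{-1}}=0$ for $0<k<d$, so the comultiplication formula for $x^d$ collapses to $\Delta(x^d)=x^d\otimes 1+g^d\otimes x^d$.

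For case (i), where $g\cdot 1_R=1_R$, I would apply Lemma \ref{parte_global}: $g$ and $x$ act globally, so $x\cdot(h\cdot r)=xh\cdot r$ for every $h$. Peeling the leftmost factor $m$ times yields $x^ma\cdot r=(x\cdot\underline{\ \ })^{m}(a\cdot r)$, and since the operator $x\cdot\underline{\ \ }$ is $d$-nilpotent this vanishes whenever $m\ge d$.

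For cases (ii) and (iii) we have $g\cdot 1_R=0$, so Theorem \ref{formula_eh_acao_parcial_geral} (equivalently Corollary \ref{formula_eh_acao_parcial}) gives the action by \eqref{formula} with $w=x\cdot 1_R$ and $q=\chi(g)$. First I would show it suffices to prove the base case $m=d$: the recursion $x^{j+1}a\cdot r=w(x^ja\cdot r)-(gx^ja\cdot r)w$ from the proof of Theorem \ref{formula_eh_acao_parcial_geral}, together with $gx^ja=q^{-j}x^j(ga)$, propagates the vanishing $x^ja'\cdot r=0$ (for all $a'\in A$) from level $j$ to level $j+1$. To settle $m=d$, apply Lemma \ref{lemma_truncamento}(i) with $M=d$, $j_D=1$, $j_R=0$, which collapses \eqref{formula} to
$$x^da\cdot r=w^d(a\cdot r)-(g^da\cdot r)w^d.$$
In case (ii), $w=x\cdot 1_R=0$ kills the right-hand side at once (in fact \eqref{formula} already gives $x^ja\cdot r=0$ for all $j\ge 1$). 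In case (iii), the equality $g^d\cdot 1_R=1_R$ together with (PA.3) yields $g^d\cdot(a\cdot r)=(g^d\cdot 1_R)(g^da\cdot r)=g^da\cdot r$; substituting $s=a\cdot r$ into the hypothesis $(x\cdot 1_R)^ds=(g^d\cdot s)(x\cdot 1_R)^d$ then gives $w^d(a\cdot r)=(g^d\cdot(a\cdot r))w^d=(g^da\cdot r)w^d$, so that $x^da\cdot r=0$.

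I expect the reduction to $m=d$ and cases (i)--(ii) to be routine; the crux is case (iii). The key realization is that Lemma \ref{lemma_truncamento}(i) evaluated at $j=d$ produces exactly the commutator-type expression $w^d(a\cdot r)-(g^da\cdot r)w^d$, and that the twisted-centrality hypothesis on $(x\cdot 1_R)^d$ combined with the global action of $g^d$ (guaranteed by $g^d\cdot 1_R=1_R$) is precisely what forces this difference to cancel. The only bookkeeping to watch is the normality of $x^d$ and the fact that the inductive step uses solely the $g\cdot 1_R=0$ recursion, so it never demands a global action of $x$.
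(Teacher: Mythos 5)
Your proposal is correct, and for cases (i), (ii) and the descent to the quotient it coincides with the paper's argument (global action of $x$ via Lemma \ref{parte_global} and then $d$-nilpotence; vanishing of \eqref{formula} when $w=0$; factorization through the Hopf ideal). Where you genuinely diverge is case (iii). The paper evaluates the full truncated expansion of Lemma \ref{lemma_truncamento}(i) for \emph{every} $j\geq d$: it first upgrades the hypotheses to all powers ($g^{d\ell}\cdot 1_R=1_R$, hence $g^{d\ell}$ acts globally, and $(x\cdot 1_R)^{d\ell}\,r=(g^{d\ell}\cdot r)(x\cdot 1_R)^{d\ell}$), commutes $g^{d\ell}$ past $x^{j_R}$ (using $q^{d\ell}=1$), collapses each summand to $(x\cdot 1_R)^{dj_D}(x^{j_R}a\cdot r)$, and concludes by the vanishing of $\sum_{\ell}(-1)^{\ell}\binom{j_D}{\ell}$. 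You instead isolate the single base case $j=d$, where the truncation has only two terms, kill it with the hypothesis applied at $r\mapsto a\cdot r$ together with $g^d\cdot(a\cdot r)=g^da\cdot r$, and then propagate upward by the recursion $x^{j+1}a\cdot r=(x\cdot 1_R)(x^ja\cdot r)-(gx^ja\cdot r)(x\cdot 1_R)$ combined with $gx^ja=q^{-j}x^j(ga)$. Your route buys a lighter computation (no iterated identities, no alternating binomial sum, and only the $j_D=1$ instance of the truncation lemma), and your explicit reductions ($I=\bigoplus_{m\geq d}Ax^m$ via normality of $x^d$, and $\Delta(x^d)=x^d\otimes 1+g^d\otimes x^d$ from the vanishing $q$-binomials) make precise what the paper leaves implicit; the paper's route buys uniformity, since the same all-$j$ computation is reused almost verbatim in Proposition \ref{prop:rank_one_non_nilp}, where your induction would not transfer as directly. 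One presentational caveat: the recursion you invoke is established inside the proof of Theorem \ref{formula_eh_acao_parcial_geral} rather than as a standalone lemma, so you should either cite that derivation explicitly or note that it holds for any partial action with $g\cdot 1_R=0$ (you correctly flag that this is the only hypothesis it needs); with that reference in place the argument is complete.
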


\begin{proof}
First assume the conditions in $(i)$.
By Lemma \ref{parte_global}, $x$ acts globally.
Hence, for $j\geq d$ and $a \in A$, we have
\begin{align*}
    x^ja\cdot r = x^{j-d}\cdot (x^{d}\cdot (a \cdot r)) = 0,
\end{align*}
since the action by $x$ is $d$-nilpotent.

Now assume that $g \cdot 1_R=0$. 
Corollary \ref{formula_eh_acao_parcial} shows us that \eqref{formula} holds.
Then, by Lemma \ref{lemma_truncamento}, we get that
\begin{align*}
x^ja \cdot r &= \sum_{\ell = 0}^{j_D} (-1)^{\ell} \binom{j_D}{\ell}  (x \cdot 1_R)^{d(j_D-\ell)}\left(x^{j_R}g^{d\ell} a \cdot r \right)(x \cdot 1_R)^{d \ell},
\end{align*}
for all $j \in \N_0, a\in A, r \in R$.

If $x\cdot 1_R = 0 $, clearly $x^ja \cdot r = 0 $, for $j\geq d$.

At last, assume conditions in $(iii)$.  
Thus, $g^{d\ell}\cdot 1_R = 1_R$, for $\ell\geq 0$, what implies that
$g^{d\ell}\cdot (y \cdot r) = g^{d\ell} y \cdot r$ for all $y\in A[x, \sigma], r \in R$.
Furthermore, from $(x\cdot 1_R)^d \, r = (g^d\cdot r)(x\cdot 1_R)^d $, it follows that $(x\cdot 1_R)^{d\ell} \, r = (g^{d\ell}\cdot r)(x\cdot 1_R)^{d\ell} $.
Hence,
\begin{align*}
x^ja \cdot r &= \sum_{\ell = 0}^{j_D} (-1)^{\ell} \binom{j_D}{\ell}  (x \cdot 1_R)^{d(j_D-\ell)}\left(x^{j_R}g^{d\ell}a  \cdot r \right)(x \cdot 1_R)^{d \ell} \\
&= \sum_{\ell = 0}^{j_D} (-1)^{\ell} \binom{j_D}{\ell} (x \cdot 1_R)^{d(j_D-\ell)}\left(g^{d\ell}x^{j_R}a  \cdot r \right)(x \cdot 1_R)^{d \ell} \\
&= \sum_{\ell = 0}^{j_D} (-1)^{\ell} \binom{j_D}{\ell} (x \cdot 1_R)^{d(j_D-\ell)}\left(g^{d\ell}\cdot (x^{j_R}a  \cdot r) \right)(x \cdot 1_R)^{d \ell} \\
&= \sum_{\ell = 0}^{j_D} (-1)^{\ell} \binom{j_D}{\ell} (x \cdot 1_R)^{d j_D}\left( x^{j_R}a  \cdot r \right) \\
&= (x \cdot 1_R)^{d j_D}\left( x^{j_R}a  \cdot r \right) \sum_{\ell = 0}^{j_D} \binom{j_D}{\ell} (-1)^{\ell} = 0,
\end{align*}
for all $j\geq d$ and $a \in A$.
Therefore, in any case, we  have $\cdot{|_{I\otimes R}} =0$ and so it induces a partial action of $A[x, \sigma]/I$ on $R$.
\end{proof}

\begin{prop}\label{prop:rank_one_non_nilp}
Let $A[x, \sigma]$ be a Hopf-Ore extension such that $q = \chi(g) \in \mathbb{G}'_d,$ $g^d \in Z(A)$ and $\chi^d = 1$.
If $\cdot: A[x, \sigma]\otimes R\to R$ is a partial action such that:
\begin{itemize}
    \item[(i)] $g\cdot 1_R = 1_R$ and  $(x^d +g^d)  \cdot r =  r $, for all $r\in R$, or 
    \item[(ii)] $g\cdot 1_R = 0$,  $g^d\cdot 1_R = 1_R$ and $[(x\cdot 1_R)^d -1_R] \, r = (g^d\cdot r)[(x\cdot 1_R)^d -1_R] $, for all $r\in R$, 
\end{itemize}
then $\cdot{|_{I\otimes R}} =0$, where $I$  is the (Hopf) ideal $ \langle x^d + g^d -1 \rangle$.
In particular, the partial action $\cdot$ induces a partial action of $A[x, \sigma]/I$ on $R$.
\end{prop}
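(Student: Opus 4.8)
The plan is to unify both cases through the observation that $z:=x^{d}+g^{d}-1$ is \emph{central} in $A[x,\sigma]$. Indeed, by Remark \ref{remark_cond_hopf ore} we have $\sigma^{d}(a)=\chi^{d}(a_1)a_2$, and since $\chi^{d}=1$ this equals $\epsilon(a_1)a_2=a$; thus $\sigma^{d}=\mathrm{id}_{A}$, and \eqref{prod_ore} (with $\delta=0$) gives $x^{d}a=\sigma^{d}(a)x^{d}=ax^{d}$, so $x^{d}\in Z(A[x,\sigma])$. Likewise $\sigma(g^{d})=\chi(g)^{d}g^{d}=g^{d}$ because $q^{d}=1$, whence $xg^{d}=g^{d}x$; together with $g^{d}\in Z(A)$ this shows $g^{d}\in Z(A[x,\sigma])$. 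Consequently $z\in Z(A[x,\sigma])$ and the two-sided ideal $I=\langle z\rangle$ is simply $zA[x,\sigma]$. As $\{x^{j}a:j\in\N_0,\,a\in A\}$ spans $A[x,\sigma]$ and $\cdot$ is linear, proving $\cdot|_{I\otimes R}=0$ amounts to showing $z\,x^{j}a\cdot r=0$ for all $j,a,r$.

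For case $(i)$, I would use Lemma \ref{parte_global}: from $g\cdot 1_R=1_R$ the elements $g$ and $x$ act globally, and an easy induction on the exponent (repeatedly applying $x\cdot(k\cdot r)=xk\cdot r$, resp.\ the analogous identity for $g$) yields $x^{n}\cdot(h\cdot r)=x^{n}h\cdot r$ and $g^{n}\cdot(h\cdot r)=g^{n}h\cdot r$ for every $n$. Hence $(x^{d}+g^{d})\cdot(h\cdot r)=(x^{d}+g^{d})h\cdot r$, while the hypothesis $(x^{d}+g^{d})\cdot s=s$ with $s=h\cdot r$ gives $(x^{d}+g^{d})h\cdot r=h\cdot r$. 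Subtracting, $z\,h\cdot r=0$ for all $h,r$, and in particular for $h=x^{j}a$.

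For case $(ii)$, set $w:=x\cdot 1_R$. Because $g\cdot 1_R=0$, Corollary \ref{formula_eh_acao_parcial} guarantees that \eqref{formula} governs the action, and the computational heart of the argument is to evaluate it at $j=d$ for an \emph{arbitrary} $b\in A[x,\sigma]$. Since $q\in\mathbb{G}'_d$, relation \eqref{radford} kills $\binom{d}{k}_{q^{-1}}$ for $0<k<d$, and the surviving coefficient at $k=d$, namely $(-1)^{d}q^{-d(d-1)/2}$, equals $-1$ (using $q^{d(d-1)/2}=(-1)^{d-1}$ for a primitive $d$-th root of unity). Thus \eqref{formula} collapses to the clean identity $x^{d}b\cdot r=w^{d}(b\cdot r)-(g^{d}b\cdot r)w^{d}$, valid for all $b\in A[x,\sigma]$. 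Taking $b=x^{j}a$ and using that $g^{d}$ acts globally --- which follows from $g^{d}\cdot 1_R=1_R$ and (PA.3) exactly as in the proof of Lemma \ref{parte_global}, giving $g^{d}x^{j}a\cdot r=g^{d}\cdot(x^{j}a\cdot r)$ --- I obtain $x^{j+d}a\cdot r=w^{d}X-(g^{d}\cdot X)w^{d}$, where $X:=x^{j}a\cdot r$.

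It then remains to assemble $z\,x^{j}a\cdot r$. Expanding $z\,x^{j}a=x^{j+d}a+g^{d}x^{j}a-x^{j}a$ and acting on $r$ gives $z\,x^{j}a\cdot r=\big(w^{d}X-(g^{d}\cdot X)w^{d}\big)+g^{d}\cdot X-X$. The hypothesis $[(x\cdot 1_R)^{d}-1_R]\,r=(g^{d}\cdot r)[(x\cdot 1_R)^{d}-1_R]$ is precisely the relation $w^{d}t-(g^{d}\cdot t)w^{d}=t-g^{d}\cdot t$; substituting $t=X$ turns the first bracket into $X-g^{d}\cdot X$, so the three terms telescope to $0$. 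With $I=zA[x,\sigma]$ this establishes $\cdot|_{I\otimes R}=0$, and the induced partial action on $A[x,\sigma]/I$ is then furnished by the quotient principle recalled at the start of \S\ref{Sec_quotient}. I expect the main obstacle to be the collapse of \eqref{formula} to the clean two-term identity --- in particular the root-of-unity evaluation of the top coefficient as $-1$ --- and, secondarily, the careful bookkeeping of the commutation of $g^{d}$ past $x^{j}$ when passing between $g^{d}x^{j}a\cdot r$ and $g^{d}\cdot(x^{j}a\cdot r)$; the concluding telescoping is then purely formal.
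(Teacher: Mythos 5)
Your proof is correct, and your case (i) coincides with the paper's argument (global action of $g$ and $x$ via Lemma \ref{parte_global}, then $(x^d+g^d-1)\cdot s=0$ kills everything). In case (ii) you take a genuinely leaner route. The paper expands $x^ja\cdot r$ in full via Lemma \ref{lemma_truncamento} (a sum over $\ell\in\I_{0,j_D}$), writes $x^j(x^d+g^d-1)a\cdot r$ as three such sums, recombines them with Pascal's rule $\binom{j_D+1}{\ell}=\binom{j_D}{\ell-1}+\binom{j_D}{\ell}$, and then annihilates the resulting summand $(\star)$ using hypothesis (ii) together with the globality of $g^{d}$. You instead exploit that Theorem \ref{formula_eh_acao_parcial_geral} holds for arbitrary elements of $\mathcal{H}=A[x,\sigma]$: specializing \eqref{formula} at exponent $d$ and at $b=x^ja$, where the $q$-binomials collapse by \eqref{radford} and the top coefficient evaluates to $(-1)^dq^{-d(d-1)/2}=-1$ (your root-of-unity computation is right in both parities of $d$), you obtain the one-step recursion $x^{j+d}a\cdot r=w^dX-(g^d\cdot X)w^d$ with $X=x^ja\cdot r$; hypothesis (ii), read as $w^dt-(g^d\cdot t)w^d=t-g^d\cdot t$, then makes $zx^ja\cdot r$ telescope to zero. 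Your collapsed identity is in fact Lemma \ref{lemma_truncamento}(i) at $j=d$, but applied with the coefficient in the extension rather than in $A$, and this is precisely what replaces the paper's binomial bookkeeping by a purely formal cancellation; the globality of $g^d$ you invoke is derived exactly as in the paper. A further small service of your write-up: you actually prove the centrality of $x^{d}$ and $g^{d}$ (via $\sigma^{d}=\mathrm{id}_A$ from $\chi^{d}=1$, and $\sigma(g^{d})=q^{d}g^{d}=g^{d}$), which the paper only asserts when noting that the elements $x^j(x^d+g^d-1)a$ span $I$.
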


\begin{proof}
First observe that $ x^j(x^d+g^d -1) a$, $ j \geq 0, a\in A$, generate linearly the ideal $I$ since $x^d, g^d \in Z(A[x, \sigma])$.

If $(i)$ holds, then Lemma \ref{parte_global} tells us that $g$ and $x$ act globally.
Thus,
\begin{eqnarray*}
x^j(x^d+g^d -1) a \cdot r = x^j \cdot ((x^d+g^d -1) \cdot (a \cdot r))  = 0, 
\end{eqnarray*}
for all $j\geq 0, a\in A, r\in R$.

On the other hand, assume the conditions in $(ii)$. Again we get from Corollary \ref{formula_eh_acao_parcial} that \eqref{formula} remains valid and consequently Lemma \ref{lemma_truncamento} gives us that
\begin{eqnarray*}
x^ja \cdot r &=& \sum_{\ell = 0}^{j_D} (-1)^{\ell} \binom{j_D}{\ell}  (x \cdot 1_R)^{d(j_D-\ell)}\left(x^{j_R}g^{d\ell}a  \cdot r \right)(x \cdot 1_R)^{d \ell},
\end{eqnarray*}
for all $j\geq 0, a\in A, r\in R$.
Hence, we have
\begin{eqnarray*}
&& x^j(x^d+g^d-1)a \cdot r \\
&=& x^{j+d}a \cdot r +x^{j}g^{d}a \cdot r - x^{j}a \cdot r \\
& = & \sum_{\ell = 0}^{j_D +1} (-1)^{\ell} \binom{j_D+1}{\ell}  (x \cdot 1_R)^{d(j_D + 1 -\ell)}\left(x^{j_R}g^{d\ell}a  \cdot r \right)(x \cdot 1_R)^{d \ell} \\
&&+ \sum_{\ell = 0}^{j_D} (-1)^{\ell}\binom{j_D}{\ell} (x \cdot 1_R)^{d(j_D-\ell)}\left(x^{j_R}g^{d(\ell +1)} a  \cdot r \right)(x \cdot 1_R)^{d \ell} \\
&&- \sum_{\ell = 0}^{j_D} (-1)^{\ell} \binom{j_D}{\ell}  (x \cdot 1_R)^{d(j_D-\ell)}\left(x^{j_R}g^{d\ell} a  \cdot r \right)(x \cdot 1_R)^{d \ell} \\
& =& \sum_{\ell = 0}^{j_D +1} (-1)^{\ell} \left(\binom{j_D}{\ell -1}+ \binom{j_D}{\ell}\right)  (x \cdot 1_R)^{d(j_D + 1 -\ell)}\left(x^{j_R}g^{d\ell} a  \cdot r \right)(x \cdot 1_R)^{d \ell} \\
&&+ \sum_{\ell = 0}^{j_D} (-1)^{\ell} \binom{j_D}{\ell}  (x \cdot 1_R)^{d(j_D-\ell)}\left[\left(x^{j_R}g^{d(\ell +1)} a -x^{j_R}g^{d\ell}a \right)  \cdot r \right](x \cdot 1_R)^{d \ell} \\
& =& \sum_{\ell = 0}^{j_D } (-1)^{\ell +1} \binom{j_D}{\ell } (x \cdot 1_R)^{d(j_D  -\ell)}\left(x^{j_R}g^{d(\ell+1)} a  \cdot r \right)(x \cdot 1_R)^{d (\ell+1)} \\
&& + \sum_{\ell = 0}^{j_D} (-1)^{\ell} \binom{j_D}{\ell}  (x \cdot 1_R)^{d(j_D + 1 -\ell)}\left(x^{j_R}g^{d\ell} a  \cdot r \right)(x \cdot 1_R)^{d \ell} \\
&&+ \sum_{\ell = 0}^{j_D} (-1)^{\ell} \binom{j_D}{\ell}  (x \cdot 1_R)^{d(j_D-\ell)}\left[x^{j_R}g^{d\ell} a (g^{d}-1)  \cdot r \right](x \cdot 1_R)^{d \ell} \\
&=& \sum_{\ell = 0}^{j_D} (-1)^{\ell} \binom{j_D}{\ell}  (x \cdot 1_R)^{d(j_D-\ell)}\left( \star \right)(x \cdot 1_R)^{d \ell},
\end{eqnarray*}
where $(\star) $ is
\begin{align*}
x^{j_R}g^{d\ell} a (g^{d}-1)  \cdot r  -(x^{j_R}g^{d(\ell+1)} a \cdot r)(x \cdot 1_R)^d + (x \cdot 1_R)^d (x^{j_R}g^{d\ell} a  \cdot r).
\end{align*}
But, by conditions $(ii)$, it follows that
\begin{eqnarray*}
&&(x \cdot 1_R)^d (x^{j_R}g^{d\ell} a  \cdot r )\\
&=& x^{j_R}g^{d\ell} a   \cdot r + ( g^d\cdot (x^{j_R}g^{d\ell} a  \cdot r )) ( (x \cdot 1_R)^d - 1_R) \\
&=& x^{j_R}g^{d\ell} a  \cdot r +  (g^d\cdot 1_R) (g^dx^{j_R}g^{d\ell} a  \cdot r ) ( (x \cdot 1_R)^d - 1_R) \\
&=& x^{j_R}g^{d\ell} a  \cdot r +   (x^{j_R}g^{d(\ell +1)} a  \cdot r ) ( (x \cdot 1_R)^d - 1_R),
\end{eqnarray*}
what implies that $x^j(x^d+g^d-1)a \cdot r = 0 $, for all $j\geq 0, a\in A, r\in R$.
Therefore, in any case, we  have $\cdot{|_{I\otimes R}} =0$ and so it induces a partial action of $A[x, \sigma]/I$ on $R$.
\end{proof}

\section{Applications and examples}\label{Sec_Examples}

Recall the discussion after Remark \ref{remark_cod}.
There, we obtain a sufficient condition to extend a partial action through a Hopf-Ore extension, but we pointed out that it may be not necessary.
So, in Sections  \ref{Sec_xcentral} and \ref{Sec_A=KG}, we investigate when we have sufficient and necessary conditions to Theorem \ref{formula_eh_acao_parcial_volta} characterizes some partial actions of a Hopf-Ore extension.

Furthermore, we investigate in \S \ref{Sec_Rank_one} the partial actions of rank one Hopf algebras (in particular, generalized Taft algebras and Radford algebras).
At last, we study the case for Hopf-Ore extensions of Sweedler's Hopf algebra and Nichols Hopf algebras, respectively in Sections \ref{Sec_Sweedler} and  \ref{Sec_Nichols}.

\subsection{The particular case when  the element $x\cdot 1_R \in Z(R)$} \label{Sec_xcentral}

We improve, under hypothesis, some results within \S \ref{sec:3}, as follows.

\begin{rem}\label{cond_ida2}
    Let $\cdot$ be a partial
    action of $A[x,\sigma]$ on $R$  
    such that $g \cdot 1_R =0$ and $x\cdot 1_R \in Z(R)$. If
    $ga\cdot 1_R=ag\cdot 1_R$, for $a\in A$, then $a \cdot (x \cdot 1_R) = (\sigma^{-1}(a) \cdot 1_R) (x \cdot 1_R).$
    Indeed, by Corollary \ref{cor_ida},
    \begin{eqnarray*}
        a \cdot (x \cdot 1_R) & = & (a_1 \cdot 1_R) (\sigma^{-1}(a_2) \cdot 1_R)(x \cdot 1_R)\\
     & =  & (g^{-1}a_1 g \cdot 1_R) (\sigma^{-1}(a_2) \cdot 1_R)(x \cdot 1_R)\\
     &  \stackrel{\ref{remark_cond_hopf ore}}{=}   & (\sigma^{-1}(a)_1 \cdot 1_R) (\sigma^{-1}(a)_2 \cdot 1_R)(x \cdot 1_R)\\
    &   = & (\sigma^{-1}(a) \cdot 1_R) (x \cdot 1_R).
    \end{eqnarray*}
    \end{rem}

  \begin{lem}\label{lema_volta2}
    Let $\cdot : A \otimes R \to R$ be a partial action, $g \in G(A)$ such that $g \cdot 1_R =0$, and $a \cdot w=(\sigma^{-1}(a)\cdot 1_R)w$, for  $a\in A$ and $w \in Z(R)$. Then, the following properties are satisfied:
    \begin{itemize}
        \item[(i)] $a\cdot w^j=(\sigma^{-j}(a)\cdot 1_R)w^j$,
        \item[(ii)] if $ga\cdot r=ag\cdot r$, then $(a_1\cdot w^{j})(a_2g^kb\cdot r)=(a_1\cdot 1_R)(g^k\sigma^{-j}(a_2)b\cdot r) w^j$,
        \item[(iii)] if $ga\cdot r=ag\cdot r$ and $\cdot$ is symmetric, then $(a_1g^kb\cdot r)(a_2\cdot w^{j})=(g^k\sigma^{-j}(a_1)b\cdot r)(a_2\cdot 1_R) w^j$,
    \end{itemize}
    for all $r\in R$, $j\in \N_{0}$ and $ k\in\mathbb{Z}$. 
  \end{lem}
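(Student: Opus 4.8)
\textbf{Plan for proving Lemma \ref{lema_volta2}.}
The three items build on one another, so I would establish them in order, using the hypothesis $a\cdot w = (\sigma^{-1}(a)\cdot 1_R)w$ together with the fact that $w\in Z(R)$ throughout.

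For item (i), the natural approach is induction on $j$. The base case $j=1$ is exactly the hypothesis. For the inductive step I would write $w^{j+1}=w^j\,w$ and apply (PA.2) to get $a\cdot w^{j+1}=(a_1\cdot w^j)(a_2\cdot w)$. Using the induction hypothesis on the first factor and the defining hypothesis on the second yields $(\sigma^{-j}(a_1)\cdot 1_R)w^j(\sigma^{-1}(a_2)\cdot 1_R)w$. Since $w\in Z(R)$, I can collect the two $w$-powers into $w^{j+1}$ and merge the two action terms; the key identity here is that $\Delta(\sigma^{-i}(a))=\sigma^{-i}(a_1)\otimes a_2$ from Remark \ref{remark_cond_hopf ore}, which lets me recognize $(\sigma^{-j}(a_1)\cdot 1_R)(\sigma^{-1}(a_2)\cdot 1_R)$ as $(\sigma^{-(j+1)}(a)\cdot 1_R)$ after reindexing the coproduct. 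The main care needed is bookkeeping of how $\sigma^{-1}$ interacts with the coproduct legs, but Remark \ref{remark_cond_hopf ore} supplies exactly what is required.

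For item (ii), I would start from the left-hand side $(a_1\cdot w^j)(a_2 g^k b\cdot r)$ and apply item (i) to the first factor, obtaining $(\sigma^{-j}(a_1)\cdot 1_R)\,w^j\,(a_2 g^k b\cdot r)$. Since $w^j\in Z(R)$ it can be slid to the right. The crux is then to recombine $(\sigma^{-j}(a_1)\cdot 1_R)(a_2 g^k b\cdot r)$ into the single term $(a_1\cdot 1_R)(g^k\sigma^{-j}(a_2)b\cdot r)$. Here I expect to invoke (PA.3) (or its rewriting) to fuse two adjacent partial-action factors, and crucially the commutativity hypothesis $ga\cdot r=ag\cdot r$ to commute $g^k$ past the relevant coproduct leg, again using $\Delta(\sigma^{-j}(a))=g^{-j}a_1 g^j\otimes\sigma^{-j}(a_2)$ from Remark \ref{remark_cond_hopf ore} to relocate the $\sigma^{-j}$ onto the correct tensor factor. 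This is the step I anticipate being the main obstacle, since it requires carefully matching the coproduct notation on both sides and exploiting that $g$ acts centrally on the $1_R$-values.

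Item (iii) is the symmetric counterpart of (ii): I would run the same computation but using (PA.S) in place of (PA.3), moving the $w^j$ produced by item (i) out to the right and recombining the action terms from the other side. Given the symmetry assumption, the manipulation mirrors item (ii) essentially verbatim, so I would simply indicate that the argument is analogous, with the roles of the left and right factors interchanged. The overall difficulty is concentrated in the coproduct/$\sigma$-bookkeeping of item (ii); once that fusion identity is secured, items (i) and (iii) follow by the same mechanism.
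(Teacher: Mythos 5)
Your plan for item (i) assembles the right ingredients but the merging identity you state is not what Remark \ref{remark_cond_hopf ore} provides: $\Delta(\sigma^{-(j+1)}(a))$ is \emph{not} $\sigma^{-j}(a_1)\otimes\sigma^{-1}(a_2)$. Combining the two coproduct formulas of the remark actually gives $\Delta(\sigma^{-(j+1)}(a)) = g^{-1}\sigma^{-j}(a_1)g\otimes\sigma^{-1}(a_2)$, and item (i) carries no hypothesis that lets you absorb the $g$-conjugation (the commutativity assumption $ga\cdot r = ag\cdot r$ is only available in items (ii)--(iii)). The repair stays inside your own decomposition $w^{j+1}=w^jw$, but you must fuse \emph{before} using the defining hypothesis on the second factor: from $(a_1\cdot w^j)(a_2\cdot w) = (\sigma^{-j}(a_1)\cdot 1_R)(a_2\cdot w)\,w^j$, apply (PA.2) with $\Delta(\sigma^{-j}(a)) = \sigma^{-j}(a_1)\otimes a_2$ to get $(\sigma^{-j}(a_1)\cdot 1_R)(a_2\cdot w) = \sigma^{-j}(a)\cdot w$, and only then invoke the hypothesis for the element $\sigma^{-j}(a)$, yielding $(\sigma^{-(j+1)}(a)\cdot 1_R)w^{j+1}$. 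This is, in effect, the paper's induction with the two factors of $w^{j+1}$ interchanged.

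The genuine gap is in item (ii). The hypothesis $ga\cdot r = ag\cdot r$ only transports $g$ between the two \emph{ends} of the element acting on $r$ (equivalently $g^{-1}cg\cdot r = c\cdot r$, whence $g^kc\cdot r = cg^k\cdot r$ for all $k\in\mathbb{Z}$); it does \emph{not} let you extract $g^k$ from the middle of a product: $c\,g^kb\cdot r = g^kc\,b\cdot r$ fails in general, since $cg^kb - g^kcb = (cg^k-g^kc)b$ need not lie in the span of the elements $ga-ag$. Running your order of operations --- item (i) first, then the (PA.3)-fusion via $\Delta(\sigma^{-j}(a)) = g^{-j}a_1g^j\otimes\sigma^{-j}(a_2)$ and conjugation-invariance on the first leg --- you land at $(a_1\cdot 1_R)(\sigma^{-j}(a_2)\,g^kb\cdot r)\,w^j$, with $g^k$ trapped between $\sigma^{-j}(a_2)$ and $b$ and no licensed move to bring it to the front. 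The paper avoids this with an initial maneuver your sketch omits: by (PA.2$'$), $(a_1\cdot w^{j})(a_2g^{k}b\cdot r) = a\cdot\bigl(w^j(g^kb\cdot r)\bigr)$, and inside the inner action $g^k$ sits at the \emph{left end} of $g^kb$, so $g^kb\cdot r = bg^k\cdot r$ is legitimate; re-expanding by (PA.2$'$), then applying item (i) and the Remark \ref{remark_cond_hopf ore} fusion, one reaches $(a_1\cdot 1_R)(\sigma^{-j}(a_2)bg^k\cdot r)w^j$, where $g^k$ is at the right end and may be legally moved to the front. You flagged this recombination as the main obstacle, correctly --- but the fix is the order of operations, not more coproduct bookkeeping. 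Item (iii) is then indeed the symmetric analogue, as both you and the paper treat it.
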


   \begin{proof}
      $(i)$ We prove the desired equality by induction on $j$.
      It is immediate for $j=0$, and $j=1$ is valid by hypothesis. Now, suppose true for $j\geq 0$, then by  the induction hypothesis,
      \begin{eqnarray*}
          a\cdot w^{j+1} & = &  (a_1\cdot w)(a_2\cdot w^{j})\\
         & = &  (\sigma^{-1}(a_1)\cdot 1_R) (a_2\cdot w^{j})w\\
       & \stackrel{\ref{remark_cond_hopf ore}}{=}   &  (\sigma^{-1}(a)\cdot w^j)w\\
        & =  & (\sigma^{-j}(\sigma^{-1}(a))\cdot 1_R)w^{j+1}\\
        & = & (\sigma^{-(j+1)}(a)\cdot 1_R) w^{j+1}.
      \end{eqnarray*}

      $(ii)$ Using item (i) and $ga\cdot r=ag\cdot r$,
      \begin{eqnarray*}
          (a_1\cdot w^{j})(a_2g^kb\cdot r) & = & a\cdot (w^j(g^kb\cdot r))\\
         & = & a\cdot (w^j(bg^k\cdot r))\\
          & = & (a_1\cdot w^{j})(a_2bg^k\cdot r)\\
          & = & (\sigma^{-j}(a_1)\cdot 1_R)(a_2bg^k\cdot r) w^j \\
         & \stackrel{\ref{remark_cond_hopf ore}}{=} &  (a_1\cdot 1_R)(\sigma^{-j}(a_2)bg^k\cdot r) w^j\\
         & = & (a_1\cdot 1_R)(g^k\sigma^{-j}(a_2)b\cdot r) w^j.
  \end{eqnarray*}

  $(iii)$ Analogously,
      \begin{eqnarray*}
          (a_1g^kb\cdot r)(a_2\cdot w^{j}) & = & (a_1bg^k\cdot r)(\sigma^{-j}(a_2)\cdot 1_R)w^{j}\\
         & = & (g^k\sigma^{-j}(a_1)b\cdot r)(a_2\cdot 1_R) w^j. 
          \end{eqnarray*} 
    \end{proof}

Thus, we obtain the following characterization.

  \begin{thm}\label{teo_carac_extgrupo}
   Let $\cdot: A[x,\sigma] \otimes R \to R$ be a linear map  such that $x \cdot 1_R \in Z(R)$, $g\cdot 1_R=0$ and $ga\cdot r=ag\cdot r$, for all $a\in A$ and $r\in R$.
Then $\cdot$ is a (symmetric) partial action of $A[x,\sigma]$ on $R$, if and only if,  $\cdot |_{A\otimes R}$ is a (symmetric) partial action of $A$ on $R$, $a\cdot (x \cdot 1_R)=(\sigma^{-1}(a)\cdot 1_R)(x \cdot 1_R)$, for all $a\in A$, and $\cdot$ coincides with \eqref{formula_ext}, where $q=\chi(g)$ and $w= x\cdot 1_R$.
\end{thm}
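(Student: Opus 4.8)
The plan is to prove the biconditional by treating the two directions separately, exploiting the fact that under the hypothesis $x\cdot 1_R\in Z(R)$ the abstract sufficient condition \eqref{somatorio_acao} collapses to the pointwise identity \eqref{igualdade_pontual}, which Lemma \ref{lema_volta2} is designed to supply. First I would handle the forward (``only if'') direction: assuming $\cdot$ is a partial action of $A[x,\sigma]$ on $R$, the restriction $\cdot|_{A\otimes R}$ is trivially a partial action of $A$ on $R$ (the axioms (PA.1)--(PA.3) restrict), and it is symmetric if $\cdot$ is. The identity $a\cdot(x\cdot 1_R)=(\sigma^{-1}(a)\cdot 1_R)(x\cdot 1_R)$ is exactly Remark \ref{cond_ida2}, which applies verbatim since its hypotheses are $g\cdot 1_R=0$, $x\cdot 1_R\in Z(R)$, and $ga\cdot 1_R=ag\cdot 1_R$ (the latter following from $ga\cdot r=ag\cdot r$). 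Finally, that $\cdot$ coincides with \eqref{formula_ext} for $w=x\cdot 1_R$ is precisely the content of Theorem \ref{formula_eh_acao_parcial_geral} together with Corollary \ref{formula_eh_acao_parcial}, noting $q=\chi(g)=q$ matches the commutation $gx=q^{-1}xg$ forced by Theorem \ref{teo_panov}(i).

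For the reverse (``if'') direction, which is the substance of the theorem, I would invoke Theorem \ref{formula_eh_acao_parcial_volta}. Given that $\cdot|_{A\otimes R}$ is a partial action, that $a\cdot(x\cdot 1_R)=(\sigma^{-1}(a)\cdot 1_R)(x\cdot 1_R)$, and that $\cdot$ is defined on $A[x,\sigma]$ by formula \eqref{formula_ext} with $w=x\cdot 1_R\in Z(R)$, items (i) and (ii) of that theorem already give (PA.1) and (PA.2) for free. It then remains to verify the hypothesis \eqref{cod_volta} (and, in the symmetric case, \eqref{cod_voltasym}) so that items (iii) and (iv) deliver (PA.3) and (PA.S). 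By Remark \ref{remark_cod}, condition \eqref{cod_volta} is equivalent to \eqref{somatorio_acao}, and the crucial observation is that Lemma \ref{lema_volta2}(ii) establishes the stronger pointwise equality \eqref{igualdade_pontual} termwise: its hypotheses are exactly $g\cdot 1_R=0$, the derivation-type identity $a\cdot w=(\sigma^{-1}(a)\cdot 1_R)w$ for $w\in Z(R)$, and $ga\cdot r=ag\cdot r$, all of which are in force. Since \eqref{igualdade_pontual} holds for every $0\le k\le j$, summing against the $q$-binomial coefficients yields \eqref{somatorio_acao}, hence \eqref{cod_volta}. Thus Theorem \ref{formula_eh_acao_parcial_volta}(iii) applies and $\cdot$ is a partial action. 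For symmetry, one applies Lemma \ref{lema_volta2}(iii) analogously to obtain the termwise form of \eqref{somatorio_acao_simetrica}, hence \eqref{cod_voltasym}, and invokes item (iv).

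The main obstacle, and the only point requiring genuine care, is confirming that the hypotheses of Lemma \ref{lema_volta2}(ii)--(iii) are genuinely met---in particular that the single identity $a\cdot(x\cdot 1_R)=(\sigma^{-1}(a)\cdot 1_R)(x\cdot 1_R)$ assumed in the theorem is precisely the ``base case'' $j=1$ feeding the induction in Lemma \ref{lema_volta2}(i), so that the lemma's conclusion for general $j$ is available. One should also double-check the bookkeeping that $w=x\cdot 1_R$ indeed lies in $Z(R)$ (given by hypothesis) and that $q=\chi(g)$ is the correct scalar matching the commutation relation used throughout \S\ref{sec:3}. Once these alignments are verified, the argument is essentially an assembly of previously established results, with Lemma \ref{lema_volta2} providing the bridge that converts the centrality of $x\cdot 1_R$ into the termwise validity of \eqref{igualdade_pontual}.
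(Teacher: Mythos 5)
Your proposal is correct and takes essentially the same route as the paper's (very terse) proof: the forward direction via Theorem \ref{formula_eh_acao_parcial_geral} together with Remark \ref{cond_ida2}, and the converse by assembling Theorem \ref{formula_eh_acao_parcial_volta}, Remark \ref{remark_cod} and Lemma \ref{lema_volta2} to verify \eqref{cod_volta} and \eqref{cod_voltasym} termwise. The only bookkeeping nuance, which you correctly flag, is that Lemma \ref{lema_volta2}(ii)--(iii) gives the two-factor form of the pointwise identity, which matches \eqref{igualdade_pontual} after using the centrality of $w = x \cdot 1_R$ to collapse $w^{j-k}(\,\cdot\,)w^k$ into a single power of $w$.
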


\begin{proof}
    The first part follows directly of Theorem \ref{formula_eh_acao_parcial_geral} and Remark \ref{cond_ida2}, and the converse follows by results \ref{formula_eh_acao_parcial_volta}, \ref{remark_cod} and \ref{lema_volta2}.
\end{proof}

Notice that the condition $ga\cdot r=ag\cdot r$, in the above theorem, is not so restrictive, since a lot of Hopf algebras are quotients of Hopf-Ore extensions of group algebras, and thus $g$ is a central element in $A$ (see \cite[Prop. 2.2.]{Panov}).
In such case, the above result characterizes partial actions when $R$ is a domain, for instance.

\subsection{The particular case when $A= \Bbbk G$}\label{Sec_A=KG}

Similarly to \S \ref{Sec_xcentral}, we determine necessary and sufficient conditions to extend a (symmetric) partial action  of the group algebra $\Bbbk G$ on  $R$ to a (symmetric) partial action of $\Bbbk G[x,\sigma]$ on $R$.
In particular, in such case, the hypothesis $x \cdot 1_R \in Z(R)$ is not needed.

\begin{rem}\label{cond_ida}
     Let $\cdot$ be a partial
    action of $\Bbbk G[x,\sigma]$ on $R$  
    such that $g \cdot 1_R =0$. Then, $a \cdot (x \cdot 1_R) = (\sigma^{-1}(a) \cdot 1_R) (x \cdot 1_R)$,  for all $a\in  G$.
    Indeed, 
        \begin{eqnarray*}
        a \cdot (x \cdot 1_R) & \stackrel{\ref{cor_ida}}{=} & (a \cdot 1_R)(x \cdot 1_R) (\sigma^{-1}(a) \cdot 1_R)\\
        & = & \chi^{-1}(a) (a \cdot 1_R)(x \cdot 1_R) (a \cdot 1_R)\\
        & \stackrel{\eqref{ide_quasecentral}}{=} &\chi^{-1}(a) (a \cdot 1_R)(x \cdot 1_R)\\
        & = & (\sigma^{-1}(a) \cdot 1_R) (x \cdot 1_R).
    \end{eqnarray*}
       \end{rem}

  \begin{lem}\label{lema_volta}
    Let $\cdot : \Bbbk G \otimes R \to R$ be a partial action such that $g \cdot 1_R =0$ and $a \cdot w=(\sigma^{-1}(a)\cdot 1_R)w$, for $a\in G$ and $w\in R$. Then, for all $j\in \N$ and $ k\in\mathbb{Z}$, the following properties hold: 
    \begin{itemize}
        \item[(i)] $a\cdot w^j=(\sigma^{-j}(a)\cdot 1_R)w^j$,
        \item[(ii)] $(a\cdot w^{j-k})(ag^kb\cdot r)(a\cdot w^k)=(a\cdot 1_R)w^{j-k}(g^k\sigma^{-j}(a)b\cdot r) w^k$,
        \item[(iii)] if $\cdot$ is symmetric, then $$(a\cdot w^{j-k})(ag^kb\cdot r)(a\cdot w^k)=w^{j-k}(g^k\sigma^{-j}(a)b\cdot r) w^k(a\cdot 1_R).$$
    \end{itemize}
  \end{lem}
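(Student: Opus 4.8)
The plan is to prove the three identities by induction on $j$, treating item (i) first since the other two will rely on it. For item (i), the base cases $j=0$ (trivial) and $j=1$ (the hypothesis) are clear; for the inductive step I would write $w^{j+1}=w\,w^{j}=w^{j}\,w$ and apply property (PA.2) to factor $a\cdot w^{j+1}=(a_1\cdot w)(a_2\cdot w^{j})$. Then I substitute the $j=1$ hypothesis into the first factor, use that $w$ is central in $R$ to move the scalar copy of $w$ to the right, and recombine via (PA.2) in reverse to obtain $(\sigma^{-1}(a)\cdot w^{j})w$. Finally, applying the induction hypothesis to $\sigma^{-1}(a)$ (legitimate since $\sigma^{-1}$ is an algebra map and $\sigma^{-1}(a)$ still lies in $A$) and using $\sigma^{-j}\circ\sigma^{-1}=\sigma^{-(j+1)}$ closes the induction. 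The role of Remark \ref{remark_cond_hopf ore} here is to guarantee $\Delta(\sigma^{-1}(a))=\sigma^{-1}(a_1)\otimes a_2$, so that the coproduct interacts correctly with the partial action when recombining factors.

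For item (ii), I would start from the right-hand side of the target identity and work to realize the left-hand side as a single partial action applied to a product. The natural move is to recognize $(a_1\cdot w^{j})(a_2 g^k b\cdot r)$ as an instance of $a\cdot\big(w^{j}(g^k b\cdot r)\big)$ via (PA.2$'$), after first using the hypothesis $ga\cdot r=ag\cdot r$ to commute $g^k$ past $b$ inside the action (so that the $g^k$ can later be absorbed by $\sigma$ through Remark \ref{remark_cond_hopf ore}). Then I apply part (i) to the factor $a_1\cdot w^{j}$, push the central element $w^{j}$ to the right, and reapply Remark \ref{remark_cond_hopf ore} in the form $\sigma^{-j}(a_1)\otimes a_2\mapsto a_1\otimes\sigma^{-j}(a_2)$ to shift the twist onto the second leg of the coproduct, arriving at $(a_1\cdot 1_R)(g^k\sigma^{-j}(a_2)b\cdot r)\,w^{j}$. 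The chain of equalities in the statement's proof sketch confirms this is the intended route. Item (iii) follows by the symmetric analogue: one uses the symmetry axiom (PA.S) together with part (i), replacing the left-absorption of $w^{j}$ by a right-absorption and swapping the roles of the two coproduct legs, so the computation is formally parallel.

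The main obstacle I anticipate is bookkeeping the interaction between $\sigma^{-j}$ and the coproduct: one must apply Remark \ref{remark_cond_hopf ore} in exactly the right direction at each step, since $\Delta(\sigma^{-i}(a))$ admits two distinct expressions ($\sigma^{-i}(a_1)\otimes a_2$ versus $g^{-i}a_1g^{i}\otimes\sigma^{-i}(a_2)$), and choosing the wrong one derails the recombination. A secondary subtlety is that centrality of $w$ is used repeatedly to slide powers of $w$ through action terms; I would flag explicitly at each use that $w\in Z(R)$ is what licenses the move, since without it neither (i)'s recombination nor (ii)'s absorption of $w^{j}$ on the right would be valid. Given these cautions, none of the individual steps is deep; the content is entirely in orchestrating the coproduct identities, the commutation $ga\cdot r=ag\cdot r$, and the centrality of $w$ in the correct sequence.
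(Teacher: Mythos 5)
Your proposal proves the wrong lemma: it is essentially a proof of Lemma \ref{lema_volta2} (the general case, with the extra hypotheses $w\in Z(R)$ and $ga\cdot r=ag\cdot r$), not of Lemma \ref{lema_volta}. In the present statement $w$ is an \emph{arbitrary} element of $R$ --- the whole point of \S \ref{Sec_A=KG}, stated there explicitly, is that for $A=\Bbbk G$ the centrality hypothesis on $x\cdot 1_R$ is not needed --- yet every key move in your argument is licensed by $w\in Z(R)$: in (i) you slide $w$ out of the middle of $(\sigma^{-1}(a_1)\cdot 1_R)\,w\,(a_2\cdot w^{j})$ before recombining, in (ii) you ``push the central element $w^{j}$ to the right,'' and you even flag at the end that without $w\in Z(R)$ neither step would be valid. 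You also invoke ``the hypothesis $ga\cdot r=ag\cdot r$,'' which Lemma \ref{lema_volta} does not assume (for a Hopf--Ore extension of $\Bbbk G$ the relation $ga=ag$ is automatic from Panov's condition applied to grouplikes). A further symptom of the mismatch: your item (ii) treats only the two-factor expression $(a_1\cdot w^{j})(a_2g^{k}b\cdot r)$, whereas the identity to be proved here has three factors, $(a\cdot w^{j-k})(ag^{k}b\cdot r)(a\cdot w^{k})$, with the same grouplike $a$ in every slot; the right-hand factor $(a\cdot w^{k})$ is exactly where the real difficulty sits, and your argument never touches it.

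The paper's proof replaces centrality of $w$ by two facts specific to the group-algebra case, neither of which appears in your proposal. First, for $a\in G$ one has $\sigma^{-1}(a)=\chi^{-1}(a)a$, so the hypothesis reads $a\cdot w=\chi^{-1}(a)(a\cdot 1_R)w$: the twisted factor is a \emph{scalar} multiple of the idempotent $a\cdot 1_R$, not a new element. Second, Lemma \ref{ide_central} supplies the quasi-centrality identity \eqref{ide_quasecentral}, $(a\cdot 1_R)r(a\cdot 1_R)=(a\cdot 1_R)r$. Then (i) goes by induction using $\Delta(a)=a\otimes a$: $a\cdot w^{j+1}=(a\cdot w)(a\cdot w^{j})=\chi^{-(j+1)}(a)(a\cdot 1_R)w(a\cdot 1_R)w^{j}=\chi^{-(j+1)}(a)(a\cdot 1_R)w^{j+1}=(\sigma^{-(j+1)}(a)\cdot 1_R)w^{j+1}$, the inner idempotent being absorbed by \eqref{ide_quasecentral} rather than by commuting $w$. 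Item (ii) is the same pattern: after rewriting both outer factors via (i), the idempotent $(a\cdot 1_R)$ trapped between $(ag^{k}b\cdot r)$ and $w^{k}$ is absorbed by \eqref{ide_quasecentral}, and $\chi^{-j}(a)\,ag^{k}=g^{k}\sigma^{-j}(a)$ converts the scalar back into the stated form. Item (iii) uses the second half of Lemma \ref{ide_central} --- for a symmetric action $a\cdot 1_R\in Z(R)$ --- to move the idempotent to the far right. Your induction skeleton for (i) is the right shape, but the substance of this lemma, namely trading centrality of $w$ for the scalar identity $\sigma^{-j}(a)=\chi^{-j}(a)a$ together with quasi-centrality of the idempotents $a\cdot 1_R$, is missing, so the proposed proof does not establish the statement as given.
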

  \begin{proof} Analogous to the proof of Lemma \ref{lema_volta2}, but one needs to use Lemma \ref{ide_central} and $\sigma^{-1}(a)=\chi^{-1}(a)a$, for all $a\in G$.
    \end{proof}
 
    Therefore, we end up with the following result:
   \begin{thm}\label{teo_carac_grupo}
    Let $\cdot: \Bbbk G[x,\sigma]\otimes R \to R$ be a linear map  such that $g\cdot 1_R=0$. Then $\cdot$ is a (symmetric) partial action of $\Bbbk G[x,\sigma]$ on $R$, if and only if, $\cdot |_{\Bbbk G \otimes R}$ is a (symmetric) partial action of $\Bbbk G$ on $R$, $a\cdot (x \cdot 1_R)=(\sigma^{-1}(a)\cdot 1_R)(x \cdot 1_R)$, for all $a\in \Bbbk G$, and $\cdot$ coincides with \eqref{formula_ext}, where $q=\chi(g)$ and $w= x\cdot 1_R$.
    \end{thm}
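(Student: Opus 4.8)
\textbf{Proof plan for Theorem \ref{teo_carac_grupo}.}
The plan is to deduce this result as a specialization of the machinery already built for general $A$, exploiting the fact that for $A = \Bbbk G$ every grouplike $a \in G$ satisfies the extra symmetry $ga = ag$ automatically (the group is, of course, not assumed abelian, but what is actually needed below is only the interplay between $g$ and a \emph{single} element $a$ via the character $\chi$, which is controlled by $\sigma^{-1}(a) = \chi^{-1}(a)a$). I would first dispose of the forward direction: assuming $\cdot$ is a (symmetric) partial action of $\Bbbk G[x,\sigma]$ with $g\cdot 1_R = 0$, the restriction $\cdot|_{\Bbbk G \otimes R}$ is trivially a (symmetric) partial action of $\Bbbk G$, the identity $a\cdot(x\cdot 1_R) = (\sigma^{-1}(a)\cdot 1_R)(x\cdot 1_R)$ is exactly Remark \ref{cond_ida}, and the fact that $\cdot$ coincides with \eqref{formula_ext} (with $q = \chi(g)$, $w = x\cdot 1_R$) is precisely Theorem \ref{formula_eh_acao_parcial_geral} together with Corollary \ref{formula_eh_acao_parcial}, which show that any partial action of $A[x,\sigma]$ with $g\cdot 1_R = 0$ is forced to have the form \eqref{formula}.

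For the converse, I would start from a linear map $\cdot$ that restricts to a (symmetric) partial action of $\Bbbk G$, satisfies the compatibility $a\cdot(x\cdot 1_R) = (\sigma^{-1}(a)\cdot 1_R)(x\cdot 1_R)$, and agrees with \eqref{formula_ext}. The goal is to verify the partial-action axioms via Theorem \ref{formula_eh_acao_parcial_volta}: axioms (i) and (ii) hold automatically by parts (i) and (ii) of that theorem, so the entire burden is to establish \eqref{cod_volta} (and \eqref{cod_voltasym} in the symmetric case). By Remark \ref{remark_cod} these reduce to the summed identities \eqref{somatorio_acao} and \eqref{somatorio_acao_simetrica}. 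The key move is to prove that each \emph{summand} matches, \emph{i.e.}, the pointwise equality \eqref{igualdade_pontual}, which for $A = \Bbbk G$ reads
\begin{align*}
(a\cdot w^{j-k})(ag^{k}b\cdot r)(a\cdot w^k) = (a\cdot 1_R)\, w^{j-k}(g^{k}\sigma^{-j}(a)b\cdot r)\, w^k,
\end{align*}
and this is supplied verbatim by Lemma \ref{lema_volta}(ii) (respectively Lemma \ref{lema_volta}(iii) for the symmetric statement), whose hypotheses are met because $g\cdot 1_R = 0$ and the assumed compatibility furnishes exactly the premise $a\cdot w = (\sigma^{-1}(a)\cdot 1_R)w$ needed to run the induction in Lemma \ref{lema_volta}(i).

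The main subtlety—and where the $\Bbbk G$ case genuinely simplifies over the general one discussed after Remark \ref{remark_cod}—is that for group algebras the pointwise identity \eqref{igualdade_pontual} \emph{does} hold, so one need not invoke the weaker centrality argument used for Sweedler's algebra. This is because grouplikes are idempotents under the action in the quasi-central sense of Lemma \ref{ide_central}: the relation $\sigma^{-1}(a) = \chi^{-1}(a)a$ lets one replace $\sigma$-twists by scalar multiples of $a$ itself, and $(a\cdot 1_R)\,r\,(a\cdot 1_R) = (a\cdot 1_R)r$ collapses the products $w^{j-k}\cdots w^k$ cleanly, which is precisely the computation carried out in Remark \ref{cond_ida} and Lemma \ref{lema_volta}. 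Thus the hard part is not a new obstruction but rather checking that the hypotheses of Lemma \ref{lema_volta} are exactly the standing assumptions, after which Theorem \ref{formula_eh_acao_parcial_volta} closes the argument; assembling these citations in the right order is the whole proof, and indeed the authors' proof should amount to invoking results \ref{formula_eh_acao_parcial_volta}, \ref{remark_cod} and \ref{lema_volta}.
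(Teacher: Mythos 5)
Your proposal is correct and follows essentially the same route as the paper: the forward direction via Theorem \ref{formula_eh_acao_parcial_geral} (with Corollary \ref{formula_eh_acao_parcial}) and Remark \ref{cond_ida}, and the converse by reducing \eqref{cod_volta} and \eqref{cod_voltasym} through Remark \ref{remark_cod} to the pointwise identity \eqref{igualdade_pontual} on grouplikes, supplied by Lemma \ref{lema_volta}(ii)--(iii). In fact you cite the correct lemma where the paper's one-line proof refers to Lemma \ref{lema_volta2}, apparently a typo for Lemma \ref{lema_volta}, since the point of the $\Bbbk G$ case is precisely that $x\cdot 1_R\in Z(R)$ is not assumed and quasi-centrality from Lemma \ref{ide_central} replaces it.
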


\begin{proof}
    The first part follows directly of Theorem \ref{formula_eh_acao_parcial_geral} and Remark \ref{cond_ida}, and the converse follows by results \ref{formula_eh_acao_parcial_volta}, \ref{remark_cod} and \ref{lema_volta2}.
\end{proof}

\subsection{Rank one Hopf algebras} \label{Sec_Rank_one}

Let $\mathfrak{D}=(G,\chi,g,\beta)$ be a tuple such that $G$ is a finite group, $\chi$ is a character of $G$, $g \in Z(G)$ and $\beta \in \{0, 1\}\subseteq \Bbbk$.
The tuple $\mathfrak{D}$ is called a \textit{datum} if $\chi^d =1$  or $\beta (1 - g^d)=0$, where $d$ is the order of $\chi(g)$.
If $\beta (1 - g^d)=0$, we say that $\mathfrak{D}$ is of \emph{nilpotent type}; otherwise we say that $\mathfrak{D}$ is of \emph{non-nilpotent type}.

Given a datum $\mathfrak{D}$, the Hopf algebra $H_\mathfrak{D}$ is defined  as the algebra generated by $G$ and $x$ subject to the following relations (further than the relations of the group $G$):
\begin{align*}
	&x^d = \beta (1 - g^d), & & xh = \chi(h) hx, \, h\in G.
\end{align*}
The coalgebra structure is determined by $\Delta(x)=x \o 1 + g \o x$ and $\Delta(h)=h \o h,$ for all $h\in G$.
The Hopf algebras $H_\mathfrak{D}$ are known in the literature as the \textit{rank one Hopf algebras} \cite{rank_one}.
Observe that $H_\mathfrak{D}$ can be seen as the quotient of the Hopf-Ore extension $ \Bbbk G [x, \sigma]$ by the (Hopf) ideal $\langle x^d - \beta (1 - g^d)\rangle$.

\medbreak

Next, we determine all partial actions of rank one Hopf algebras $H_\mathfrak{D}$ on $R$ such that $g \cdot 1_R \in \{0, 1_R\}$.

Recall from \S \ref{Sec_quotient} that all such partial actions of $H_\mathfrak{D}$ come from partial actions of $\Bbbk G [x, \sigma]$, which in turn derive from partial actions of $\Bbbk G$.
Hence, results \ref{parte_global} and \ref{teo_carac_grupo} characterizes the extensions of these partial  actions of $\Bbbk G$ to $\Bbbk G [x, \sigma]$.
Moreover, Propositions \ref{prop:rank_one_nilp} and \ref{prop:rank_one_non_nilp} give us conditions for such partial actions of Hopf-Ore extension factorize through the quotient $H_\mathfrak{D}$.

Therefore, in light of what was developed, the next theorem holds.

\begin{thm}\label{thm_rank_one}
All partial actions of the rank one Hopf algebras on any algebra $R$ such that $g\cdot 1_R\in \{0, 1_R\}$ are determined in Table \ref{tabela_rank_one}. 
\begin{table}[H]
    \centering
    \caption{Partial actions of rank one Hopf algebras}
    \label{tabela_rank_one}
    \begin{tabular}{|c|c|c|}
    \hline
     & \begin{tabular}{c}
          Nilpotent type:  \\
          $I=\langle x^d \rangle $  
     \end{tabular} & \begin{tabular}{c}
          Non-nilpotent type:  \\
          $I=\langle x^d - 1 + g^d\rangle $  
     \end{tabular} \\ \hline
        $g\cdot 1_R = 1_R$ & \begin{tabular}{c}
             Lem. \ref{parte_global}  \\
             $\&$ \\
             Prop. \ref{prop:rank_one_nilp} (i)
        \end{tabular}  & \begin{tabular}{c}
          Thm. \ref{teo_carac_grupo}  \\
          $\&$ \\
          Prop. \ref{prop:rank_one_nilp} (ii) -- (iii)
     \end{tabular} \\ \hline
         $g\cdot 1_R = 0$ & \begin{tabular}{c}
             Lem. \ref{parte_global}  \\
             $\&$ \\
             Prop. \ref{prop:rank_one_non_nilp} (i)
        \end{tabular}  & \begin{tabular}{c}
          Thm. \ref{teo_carac_grupo}  \\
         $\&$ \\
          Prop. \ref{prop:rank_one_non_nilp} (ii)
     \end{tabular} \\ \hline
    \end{tabular}
    \end{table}
\end{thm}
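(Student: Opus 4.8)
The plan is to assemble Theorem \ref{thm_rank_one} entirely from the machinery already established, since the statement is essentially a bookkeeping summary of the four cases displayed in Table \ref{tabela_rank_one}. The unifying observation is that every rank one Hopf algebra $H_\mathfrak{D}$ is the quotient $\Bbbk G[x,\sigma]/I$ where $I=\langle x^d-\beta(1-g^d)\rangle$, and that $g\in Z(G)$ forces $ga=ag$ in $A=\Bbbk G$, so the central hypothesis $ga\cdot r=ag\cdot r$ of Theorem \ref{teo_carac_grupo} is automatic. By the discussion in \S\ref{Sec_quotient}, any partial action of $H_\mathfrak{D}$ on $R$ lifts to a partial action of $\Bbbk G[x,\sigma]$ on $R$ annihilating $I$, and conversely any partial action of $\Bbbk G[x,\sigma]$ with $I\cdot R=0$ descends to $H_\mathfrak{D}$. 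Thus it suffices to classify those partial actions of $\Bbbk G[x,\sigma]$ that kill $I$, organized according to $g\cdot 1_R\in\{0,1_R\}$ and the nilpotent/non-nilpotent type of $\mathfrak{D}$.

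The argument splits into the four table entries. First I would treat the column $g\cdot 1_R=1_R$: by Lemma \ref{parte_global}, $g$ and $x$ act globally, and the extension is completely determined by the base action of $\Bbbk G$ together with the induced skew-derivation given by $x$. The condition that this descends to the quotient is then supplied by Proposition \ref{prop:rank_one_nilp}(i) in the nilpotent case (the map $x\cdot\underline{\ \ }$ is $d$-nilpotent, matching $x^d=0$) and by Proposition \ref{prop:rank_one_non_nilp}(i) in the non-nilpotent case (the relation $(x^d+g^d)\cdot r=r$ encodes $x^d=1-g^d$). For the column $g\cdot 1_R=0$, Theorem \ref{teo_carac_grupo} characterizes all extensions of a partial action of $\Bbbk G$ to $\Bbbk G[x,\sigma]$: such an extension exists precisely when $a\cdot(x\cdot 1_R)=(\sigma^{-1}(a)\cdot 1_R)(x\cdot 1_R)$ for all $a\in\Bbbk G$, and then it is forced to coincide with formula \eqref{formula_ext} with $w=x\cdot 1_R$ and $q=\chi(g)$. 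The passage to the quotient is then governed by Proposition \ref{prop:rank_one_nilp}(ii)--(iii) in the nilpotent case and Proposition \ref{prop:rank_one_non_nilp}(ii) in the non-nilpotent case, whose hypotheses on $(x\cdot 1_R)^d$ translate exactly the defining relation $x^d=\beta(1-g^d)$.

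Concretely, I would verify for each entry that the hypotheses of the cited propositions are compatible with the datum constraints: in the non-nilpotent type one has $\chi^d=1$ and $g^d\in Z(\Bbbk G)$ (since $g\in Z(G)$), so the standing assumptions $q=\chi(g)\in\mathbb{G}'_d$, $g^d\in Z(A)$, $\chi^d=1$ of Proposition \ref{prop:rank_one_non_nilp} hold, and the relation $x^d-1+g^d\mapsto 0$ matches the ideal $I=\langle x^d-1+g^d\rangle$. In the nilpotent type, $\beta(1-g^d)=0$ gives $x^d=0$, matching $I=\langle x^d\rangle$, and the relevant parts of Proposition \ref{prop:rank_one_nilp} apply after checking $g^{d}\cdot 1_R$ equals $1_R$ or $0$ as dictated by the cases $g\cdot 1_R=1_R$ or $g\cdot 1_R=0$ together with $g^d$ central. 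Collecting these verifications yields exactly the four references recorded in Table \ref{tabela_rank_one}, and since these account for all possibilities of $g\cdot 1_R\in\{0,1_R\}$ and both datum types, the classification is complete.

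I expect the main obstacle to be purely organizational rather than computational: the real content has already been proved in the cited results, so the delicate point is to confirm that the side conditions appearing in Propositions \ref{prop:rank_one_nilp} and \ref{prop:rank_one_non_nilp} (for instance $g^d\cdot 1_R=1_R$ and the centrality-type relations on $(x\cdot 1_R)^d$) are either automatically satisfied for rank one data or correctly interpreted as the extra constraints defining admissible partial actions. In particular, one must be careful that when $g\cdot 1_R=0$ the higher power $g^d\cdot 1_R$ need not vanish, and it is precisely the interplay between $g\cdot 1_R=0$ and the value of $g^d\cdot 1_R$ that distinguishes the applicable items (ii) versus (iii) of Proposition \ref{prop:rank_one_nilp}; making this dichotomy explicit is the one place requiring genuine attention.
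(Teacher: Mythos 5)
Your proposal follows essentially the same route as the paper's own (very brief) proof: realize $H_\mathfrak{D}$ as the quotient $\Bbbk G[x,\sigma]/\langle x^d-\beta(1-g^d)\rangle$, use the exhaustiveness discussion of \S\ref{Sec_quotient} together with Lemma \ref{parte_global} (case $g\cdot 1_R=1_R$) and Theorem \ref{teo_carac_grupo} (case $g\cdot 1_R=0$, where $g\in Z(G)$ makes the centrality hypothesis automatic) to classify the lifted actions of $\Bbbk G[x,\sigma]$, and invoke Propositions \ref{prop:rank_one_nilp} and \ref{prop:rank_one_non_nilp} for the descent through the quotient, checking that the datum constraints ($\chi^d=1$ in the non-nilpotent case, $g^d\in Z(\Bbbk G)$) supply the standing hypotheses. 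One remark worth recording: your pairing of cases --- nilpotent type with Proposition \ref{prop:rank_one_nilp} and non-nilpotent type with Proposition \ref{prop:rank_one_non_nilp} in \emph{both} rows --- is the one actually consistent with those propositions' hypotheses, which suggests the two off-diagonal entries of Table \ref{tabela_rank_one} as printed are transposed; your version reflects the intended content.
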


In particular, we highlight two families of rank one Hopf algebras. 
Let $ n, d\geq 2$ such that $d | n,$ and $q $ a primitive $d$-th root of unity, consider the algebra generated by the letters $x$ and $g$
subject to the following relations:
\begin{align*}
&x^d = \beta (1 - g^d), & & g^n = 1, && xg = q gx,
\end{align*}
with $\beta \in \{0, 1\}$.
If $\beta = 0$, we have the generalized Taft algebra $H_{n,d} \, (q)$; otherwise, if $\beta =1, $ we have the Radford algebra $R_{n,d} \, (q)$.

Then, Theorem \ref{thm_rank_one} characterizes such partial actions of Hopf algebras
$H_{n,d} \, (q)$ and
$R_{n,d} \, (q)$.
Moreover, it also recovers the partial actions of Taft algebras determined in \cite[Thm. 3.8]{FMS2}.

\subsection{The particular case when $A=\mathbb{H}_4$}
\label{Sec_Sweedler}

In this section, we apply the developed results to characterize some partial actions of Hopf-Ore extensions of Sweedler's Hopf algebra $\mathbb{H}_4$ on an algebra $R$.
In the examples treated previously we usually deal with a Hopf-Ore extension of a group algebra, and in such case $g$ is a central element.
So, the importance of  dealing with $\mathbb{H}_4$ relies on the fact that it is the  smallest Hopf algebra that is not a group algebra and contains a grouplike element which is not central.

Consider a Hopf-Ore extension $\mathbb{H}_4[y, \sigma]$. First of all, we notice that any partial action $\cdot : \mathbb{H}_4[y, \sigma] \otimes R \to R$ such that $g \cdot 1_R = 1_R$ is actually a global action. Indeed, it follows by Proposition \ref{parte_global}, since  $\mathbb{H}_4[y, \sigma]$ is generated as algebra by $g, x$ and $y$, and both $x$ and $y$ are $(1,g)$-primitive elements.

The global actions of $\mathbb{H}_4$ on $R$ are characterized in \cite[Prop. 4.1]{Acoes_taft_Centrone}: an automorphism $\alpha_g$ of $R$ such that $\alpha_g^2=Id_R$, and an $\alpha_g$-derivation $\partial_x$ of $R$ such that $\alpha_g \partial_x = -\partial_x \alpha_g$ and $\partial_x^2 = 0$.

Hence, the global actions of $\mathbb{H}_4[y, \sigma]$ on $R$ are characterized by a global action of $\mathbb{H}_4$ plus an $\alpha_g$-derivation $\partial_y$ of $R$ such that $\partial_y \alpha_{g} = -\alpha_{g} \partial_y$ and $\partial_y \partial_{x} = -\partial_{x} \partial_y $.

\medbreak

Now, we characterize the partial actions $\mathbb{H}_4[y, \sigma]$ on $R$ such that $g \cdot 1_R = 0$.
We begin with some general lemmas.

\begin{lem}\label{acao_skew_primitivo_extensao}
    Let $\cdot : \mathcal{H} \otimes R \to R$ be a partial action, $g \in G(\mathcal{H})$ and $x\in P_{1,g}(\mathcal{H})$ such that $xg=qgx$, for some $q \in \Bbbk^{\times}$, and $g \cdot 1_R =0$.  Consider $\mathcal{H} [y,\sigma]$ and the linear map defined in \eqref{formula_ext}.
    Then, for any $b \in \mathcal{H}$:
    \begin{itemize}
    \item[(i)] \eqref{somatorio_acao} holds for $a \in \{1, g, x\}$,
    \item[(ii)] \eqref{somatorio_acao_simetrica} holds for $a \in \{1, g^{-1}, g^{-1}x\}$.
\end{itemize}
\end{lem}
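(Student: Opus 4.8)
The goal is to verify the compatibility conditions \eqref{somatorio_acao} and \eqref{somatorio_acao_simetrica} for the specific elements $a \in \{1, g, x\}$ and $a \in \{1, g^{-1}, g^{-1}x\}$ respectively. My plan is to treat \eqref{somatorio_acao} as a pointwise identity in each summand indexed by $k$, and to exploit the very special coproducts of these elements so that the sum essentially collapses. The key inputs are Lemma \ref{acao_produto_de_skew_primitivos_nichols}, which gives $x \cdot r = (x \cdot 1_R)r = w r$ and $g^{-1}x \cdot r = -q\, r w$ (recall $w = x\cdot 1_R$ by Remark \ref{remark_cod}), together with $g \cdot r = g^{-1}\cdot r = 0$ for all $r$, coming from $g\cdot 1_R = 0$.

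For item (i), I would check \eqref{somatorio_acao} for each of the three elements separately. The case $a = 1$ is immediate: both sides reduce to $\sum_k (-1)^k q^{-k(k-1)/2}\binom{j}{k}_{q^{-1}} w^{j-k}(g^k b\cdot r)w^k$, since $\sigma^{-j}(1)=1$ and $1\cdot w^{m} = w^m$. For $a = g$, I use $\Delta(g) = g\o g$ and $\sigma^{-j}(g) = (\chi(g))^{-j}g = q^{-j}g$ (from Remark \ref{remark_cond_hopf ore}); the left side produces factors $g\cdot w^{j-k}$ and $g\cdot w^k$, which vanish unless $j=k$ and $k=0$ respectively, forcing $j=k=0$ — so the sum truncates drastically and matches the right side after accounting for the scalar $q^{-j}$. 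The genuinely substantive case is $a = x$: here $\Delta(x) = x\o 1 + g\o x$, so the left side splits into two families of terms, one with $x$ acting on the $w$-powers and one with $g$ acting on them. Using $x\cdot w^{m} = w^{m+1}$ and the vanishing $g\cdot w^m = 0$ (for $m\geq 1$) together with Lemma \ref{lema_volta2}(i)-type reasoning adapted to this setting, the surviving terms should recombine into the right-hand side, where $\sigma^{-j}(x)$ appears.

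For item (ii), the argument is the mirror image using \eqref{somatorio_acao_simetrica} and the elements $1, g^{-1}, g^{-1}x$, whose coproducts ($g^{-1}\o g^{-1}$ and $g^{-1}x\o g^{-1} + 1\o g^{-1}x$) are obtained from Remark \ref{remark_cond_hopf ore}; the relation $g^{-1}x\cdot r = -q\,rw$ plays the role that $x\cdot r = wr$ played above, which is exactly why the symmetric condition attaches naturally to $g^{-1}x$ rather than $x$. \textbf{The main obstacle} I anticipate is the $a = x$ (and dually $a = g^{-1}x$) case: one must carefully track the two coproduct components against the $q$-binomial weights, and confirm that the index shift arising from $x$ raising a power of $w$ interacts correctly with $\binom{j}{k}_{q^{-1}}$ via the Pascal recursion \eqref{6.7} — this is where a clean bookkeeping of signs, $q$-powers, and the placement of $w^{j-k}(\cdots)w^k$ is essential, and where the noncommutativity (since $w$ need not be central here) must be respected throughout.
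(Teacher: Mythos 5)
Your overall skeleton is the paper's: verify \eqref{somatorio_acao} and \eqref{somatorio_acao_simetrica} summand by summand, exploit the coproducts of $1$, $g$, $x$ (resp.\ $1$, $g^{-1}$, $g^{-1}x$), and use Lemma \ref{acao_produto_de_skew_primitivos_nichols} together with the vanishing of the $g$-action. But your setup contains a genuine error: you identify $w$ with $x\cdot 1_R$, citing Remark \ref{remark_cod}. That remark concerns the extension $A[x,\sigma]$, where $x$ is the Ore variable; in the present lemma the Ore variable is $y$, so $w$ plays the role of $y\cdot 1_R$ (a priori just a fixed element of $R$ in \eqref{formula_ext}), while $x\in\mathcal{H}$ is a skew-primitive of the \emph{base} Hopf algebra with its own element $\Omega := x\cdot 1_R$, in general distinct from $w$ --- a distinction that is essential for the sequel, since Lemma \ref{acao_skew_primitivo_extensao2} is precisely about the interplay $\Omega w + w\Omega\in Z(R)$ between the two. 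As a result, your identity ``$x\cdot w^m = w^{m+1}$'' is false in general: Lemma \ref{acao_produto_de_skew_primitivos_nichols}(i) gives $x\cdot w^m = \Omega w^m$, which does \emph{not} raise the power of $w$. This also makes your anticipated ``main obstacle'' --- an index shift interacting with the Pascal recursion \eqref{6.7} --- a phantom: no $q$-binomial identity occurs anywhere in the correct argument. In the paper's proof for $a=x$, one writes $\Delta_2(x) = x\otimes 1\otimes 1 + g\otimes\Delta(x)$; every component with $g$ in the first slot dies because $g\cdot r = 0$ for \emph{all} $r\in R$ (from $g\cdot 1_R=0$ and (PA.3)), the surviving left-hand $k$-summand is $\Omega w^{j-k}(g^k b\cdot r)w^k$, and on the right the only surviving component of $\Delta(x)$ is $x\otimes 1$, giving $(x\cdot 1_R)w^{j-k}(g^k b\cdot r)w^k$ --- the same element, term by term. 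In particular $\sigma^{-j}(x)$ never survives on the right (its term carries the factor $g\cdot 1_R=0$), so the ``recombination into the right-hand side, where $\sigma^{-j}(x)$ appears'' that you plan to establish does not exist and chasing it would lead you astray; likewise the appeal to Lemma \ref{lema_volta2} is out of place, since that lemma assumes $w\in Z(R)$, which is neither hypothesized nor needed here.

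Your $a=g$ case is also inconsistent with the hypotheses: you claim $g\cdot w^{j-k}$ and $g\cdot w^k$ ``vanish unless $j=k$ and $k=0$,'' forcing a surviving $j=k=0$ term to be matched ``after accounting for the scalar $q^{-j}$.'' But $g\cdot 1_R = 0$ is the standing assumption, so $g\cdot w^m = 0$ for every $m\geq 0$, \emph{including} $m=0$; indeed $g\cdot r = 0$ for all $r$. Hence every summand on both sides of \eqref{somatorio_acao} vanishes identically (on the right through the factor $a_1\cdot 1_R = g\cdot 1_R$), there is no surviving term, and no scalar bookkeeping arises --- the case is trivial, exactly as the paper states. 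Your plan for item (ii) mirrors (i) correctly at the level of coproducts, and your structural observation that $g^{-1}x$ (acting by right multiplication) is the element naturally paired with the symmetric condition is sound, but it inherits the same $w$-versus-$\Omega$ conflation: the correct relations are $x\cdot r = \Omega r$ and $g^{-1}x\cdot r = -q\,r\Omega$, with $w$ untouched.
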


\begin{proof} Initially, by Lemma \ref{acao_produto_de_skew_primitivos_nichols}, $x \cdot r = \Omega r$ and $g^{-1}x \cdot r = -qr \Omega$, where $\Omega := x\cdot 1_R \in R$. Moreover, $g\cdot r=g^{-1}\cdot r=0$, for all $r\in R$. 

$(i)$ The equation \eqref{somatorio_acao} is trivially satisfied for $a=1$ or $a=g$.
Now, for $a=x$, as $\Delta_2(x)=x\otimes 1\otimes 1+ g\otimes\Delta(x)$, for all $b\in \mathcal{H}$ we have: 
  \begin{eqnarray*}
     & &   \sum_{k=0}^j (-1)^k q^{ -\frac{k(k-1)}{2}} { j \choose k }_{q^{-1}} (a_1\cdot w^{j-k})(a_2g^{k}b\cdot r)(a_3\cdot w^k)\\
 & =    &   \sum_{k=0}^j (-1)^k q^{ -\frac{k(k-1)}{2}} { j \choose k }_{q^{-1}} (x\cdot w^{j-k})(g^{k}b\cdot r)w^k \\
  & =  & \sum_{k=0}^j (-1)^k q^{ -\frac{k(k-1)}{2}} { j \choose k }_{q^{-1}} \Omega w^{j-k}(g^{k}b\cdot r)w^k \\
    & = & \sum_{k=0}^j (-1)^k q^{ -\frac{k(k-1)}{2}} { j \choose k }_{q^{-1}} (a_1\cdot 1_R) w^{j-k}(g^{k}\sigma^{-j}(a_2)b\cdot r)w^k.
     \end{eqnarray*}
     
$(ii)$ Analogous to item $(i)$.
\end{proof}

\begin{rem}\label{lema_parcentarl}
   Let $R$ be an algebra and $\Omega, w \in R$.
   If $w \Omega + \Omega w \in Z(R)$, then $w^{2 \ell} \Omega = \Omega w^{2\ell}$, for all $\ell \in \N_0$.
Indeed, for any $r \in R$, $r(w \Omega + \Omega w) = (w \Omega + \Omega w)r$, since $w \Omega + \Omega w \in Z(R)$.
Then, for $r = w$, it holds that $w^2 \Omega = \Omega w^2$, and so the result follows by induction on $\ell$.
\end{rem}

\begin{lem}\label{acao_skew_primitivo_extensao2}
      Let $\cdot : \mathcal{H} \otimes R \to R$ be a partial action, $g\in G(\mathcal{H})$ and $x \in P_{1,g}(\mathcal{H})$ such that $g^2=1$, $x^2=0$ and $gx=-xg$, and $g \cdot 1_R =0$. 
    Consider $\mathcal{H} [y,\sigma]$, where $q=-1$, and the linear map defined in \eqref{formula_ext}.
     Then, the following assertions are equivalent:
    \begin{itemize}
    \item[(i)] \eqref{somatorio_acao} holds for $a=gx$ and $b \in \{1, g,  x, gx \}$,
    \item[(ii)] \eqref{somatorio_acao_simetrica} holds for $a=x$ and $b \in \{1, g,  x, gx \}$,
    \item[(iii)] $ (x\cdot 1_R) w + w(x\cdot 1_R) \in Z(R)$.
    \end{itemize}
\end{lem}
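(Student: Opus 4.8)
The plan is to prove the three-way equivalence by showing a cycle of implications, exploiting the very specific algebraic data: $g^2=1$, $x^2=0$, $gx=-xg$, $q=-1$, and $g\cdot 1_R=0$. Writing $\Omega:=x\cdot 1_R$, Lemma \ref{acao_produto_de_skew_primitivos_nichols} already gives the closed forms $x\cdot r=\Omega r$ and $g^{-1}x\cdot r=gx\cdot r=-r\Omega$ (using $g^{-1}=g$ and $q=-1$). Since $q=-1\in\mathbb{G}'_2$, Lemma \ref{lemma_truncamento} tells us the only nontrivial exponents $j$ are $j=0$ and $j=1$ modulo the $q$-binomial vanishing, so the sums in \eqref{somatorio_acao} and \eqref{somatorio_acao_simetrica} collapse to at most two terms ($k=0,1$). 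This is what makes the whole statement tractable: both conditions (i) and (ii) reduce to a small finite family of pointwise identities in $R$ involving only $\Omega$, $w$, and the action of the four elements $\{1,g,x,gx\}$.

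\textbf{Main computation.}
First I would establish the implication (i)$\Rightarrow$(iii) and (ii)$\Rightarrow$(iii). Taking $a=gx$ (respectively $a=x$) and letting $b$ range over $\{1,g,x,gx\}$, I expand both sides of \eqref{somatorio_acao} using $\Delta_2(gx)=gx\otimes g\otimes g+g^2\otimes gx\otimes g+g^2\otimes g\otimes gx$ (and the analogous coproduct for $x$), then substitute the closed forms for $x\cdot(-)$ and $gx\cdot(-)$ from Lemma \ref{acao_produto_de_skew_primitivos_nichols}. After cancellation the generic identity among the resulting terms forces $\Omega w+w\Omega$ to commute with every element of the form $b\cdot r$; choosing $b$ and $r$ so that the $b\cdot r$ span enough of $R$ (in particular exploiting $1\cdot r=r$) yields $\Omega w+w\Omega\in Z(R)$. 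Conversely, for (iii)$\Rightarrow$(i) and (iii)$\Rightarrow$(ii), I would assume $\Omega w+w\Omega\in Z(R)$ and use Remark \ref{lema_parcentarl}, which gives $w^{2\ell}\Omega=\Omega w^{2\ell}$ for all $\ell$; this central-commutation is precisely what is needed to push the stray factor $\Omega w+w\Omega$ through the surviving $w$-powers so that the two collapsed sides of \eqref{somatorio_acao} (resp. \eqref{somatorio_acao_simetrica}) agree. The symmetry between (i) and (ii) under $g$-conjugation (sending $x\mapsto gx$, $a\mapsto g^{-1}a$) should let me deduce one from the other almost formally, so I expect to prove (iii)$\Leftrightarrow$(i) in full and then transport it to (iii)$\Leftrightarrow$(ii) via that symmetry.

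\textbf{Anticipated obstacle.}
The main difficulty will be the bookkeeping in the case $b=gx$ (and $b=x$), where $b\cdot r$ itself is $\mp r\Omega$ or $\Omega r$, so that both the $g^k b$-factor and the boundary $w^{j-k}(\cdots)w^k$ factors contain copies of $\Omega$; here the identity is no longer a tautology and genuinely requires $\Omega w+w\Omega$ to be central rather than merely zero. The delicate point is verifying that the \emph{weaker} condition (iii) suffices — that is, confirming that the difference of the two sides of \eqref{somatorio_acao} lies in the two-sided ideal generated by $\Omega w+w\Omega-(\Omega w+w\Omega)$ read centrally, and does not secretly demand the stronger pointwise identity \eqref{igualdade_pontual}. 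This is exactly the phenomenon flagged in the discussion after Remark \ref{remark_cod}, and I expect the heart of the proof to be the careful algebraic rearrangement showing that centrality of $\Omega w+w\Omega$ is both necessary and sufficient for the two-term sums to coincide across all four choices of $b$.
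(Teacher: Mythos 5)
Your high-level architecture (prove (i)$\Leftrightarrow$(iii) and (ii)$\Leftrightarrow$(iii), extracting necessity from a low-degree specialization and using Remark \ref{lema_parcentarl} for sufficiency) matches the paper's, but there is a genuine gap at the step you use to make the whole verification finite. You claim that since $q=-1\in\mathbb{G}'_2$, Lemma \ref{lemma_truncamento} forces the sums in \eqref{somatorio_acao} and \eqref{somatorio_acao_simetrica} to collapse to at most two terms, so that only $j=0,1$ matter. Lemma \ref{lemma_truncamento}(iii) requires $w^M\in Z(R)$, here $w^2\in Z(R)$, which is neither a hypothesis of the statement nor a consequence of condition (iii): centrality of $\Omega w+w\Omega$ (writing $\Omega=x\cdot 1_R$) yields only $w^{2\ell}\Omega=\Omega w^{2\ell}$ via Remark \ref{lema_parcentarl}, not centrality of $w^2$ in $R$. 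Indeed for odd $j$ \emph{all} the coefficients ${j\choose k}_{-1}$ are nonzero, so the identities in (i) and (ii) genuinely involve sums with arbitrarily many terms, and verifying them for all $j\geq 2$ is the bulk of the paper's proof: for even $j$ one uses ${j\choose k}_{-1}=0$ for odd $k$; for odd $j$ and $b=g$ one needs a change of summation variable, the identity ${j\choose k+1}_{-1}={j\choose k}_{-1}$ ($j$ odd, $k$ even, via \eqref{radford}), and the centrality of $\Omega w+w\Omega$ to move it past powers of $w$; for $b=x, gx$ one additionally uses $\Omega^2\in Z(R)$, which comes from Lemma \ref{acao_produto_de_skew_primitivos_nichols}(ii). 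None of this is reachable from a two-term reduction, so as written your sufficiency direction (iii)$\Rightarrow$(i),(ii) does not go through.

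Two smaller points. With $xg=-gx$, Lemma \ref{acao_produto_de_skew_primitivos_nichols}(i) gives $gx\cdot r=-q\,r\Omega=+r\Omega$, not $-r\Omega$ as you state, and your coproduct is off in the last leg: $\Delta_2(gx)=gx\otimes g\otimes g+1\otimes gx\otimes g+1\otimes 1\otimes gx$. More importantly, you locate the critical case at $b=x,gx$; in fact the case that \emph{forces} (iii) --- and the one where centrality is invoked in the converse --- is $b=g$: already $j=1$, $b=g$ yields $r(\Omega w+w\Omega)=(\Omega w+w\Omega)r$ for all $r$, whereas $b=x,gx$ are disposed of using $\Omega^2\in Z(R)$. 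Finally, your proposed formal transfer of (i) to (ii) via $x\mapsto gx$, $a\mapsto g^{-1}a$ is plausible but unproved; the paper simply redoes the computation analogously, which is the safer route given the left/right asymmetry between \eqref{somatorio_acao} and \eqref{somatorio_acao_simetrica}.
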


\begin{proof} By Lemma \ref{acao_produto_de_skew_primitivos_nichols}, $x \cdot r = \Omega r$ and $gx \cdot r = r \Omega$, where $\Omega := x\cdot 1_R \in R$. Moreover, $g\cdot r=0$, for all $r\in R$ and $\Omega^2 \in Z(R)$.

Let's check that $(i)$ if only if $(iii)$.
As $q=-1$, $gx\in P_{g,1}(\mathcal{H})$ and $\Delta_2(gx)=gx\otimes g\otimes g +1\otimes gx\otimes g+1\otimes 1\otimes gx$, for $a=gx$ in \eqref{somatorio_acao} one gets the equivalent expression 
  \begin{align}\label{eq_sweedler}
  \begin{split}
          &   \sum_{k=0}^j (-1)^{\frac{k(3-k)}{2}} { j \choose k }_{-1}  w^{j-k}(g^{k}b\cdot r)w^k\Omega \, = \\
       & \sum_{k=0}^j (-1)^{\frac{k(3-k)}{2}} { j \choose k }_{-1} \hspace{-0.2cm}\left((-1)^j\Omega w^{j-k}(g^{k+1}b\cdot r) + w^{j-k}(g^{k+1}xb\cdot r) \right) \! w^k.
  \end{split}
   \end{align}

In particular, for $j=1$ and $b=g$, we obtain $-rw\Omega  =  - \Omega wr-w\Omega r+r\Omega w$, that is, 
\begin{eqnarray*}
 r(\Omega w + w\Omega) & =& (\Omega w + w\Omega) r,
 \end{eqnarray*}
 for all $r\in R$. Therefore, $\Omega w + w\Omega\in Z(R)$.
 
Conversely, assume $\Omega w + w\Omega\in Z(R)$.
Then, by Remark \ref{lema_parcentarl}, $w^{2 \ell} \Omega = \Omega w^{2\ell}$, for all $\ell \in \N_0$.
 Moreover, since $q=-1$, if $j$ is an even number, ${ j \choose k }_{-1}=0$ for all $k$ odd (see \S \ref{Sec_qcomb}), and so equality \eqref{eq_sweedler} can be rewritten as 
 \begin{align*}
  \begin{split}
          &   \sum_{\substack{k=0 \\ k \ \textrm{even}}}^j (-1)^{\frac{k(3-k)}{2}} { j \choose k }_{-1}  w^{j-k}(b\cdot r)w^k\Omega \, = \\
       & \sum_{\substack{k=0 \\ k \ \textrm{even}}}^j (-1)^{\frac{k(3-k)}{2}} { j \choose k }_{-1} \hspace{-0.2cm}\left(\Omega w^{j-k}(gb\cdot r) + w^{j-k}(gxb\cdot r) \right) \! w^k.
  \end{split}
   \end{align*}
 Then, it is straightforward to check that $$w^{j-k}(b\cdot r)w^k\Omega = \left(\Omega w^{j-k}(gb\cdot r) + w^{j-k}(gxb\cdot r) \right)\! w^k,$$ holds for $b \in \{1, g,  x, gx \}$, and so \eqref{eq_sweedler}.

Now, assume $j$ an odd number.
To show that \eqref{eq_sweedler} holds for each $b \in \{1, g,  x, gx \}$, we proceed by cases.
Recall that $g\cdot r=0$.

 If $b=1$, then \eqref{eq_sweedler} means
  \begin{eqnarray*}
      & & \sum_{\substack{k=0 \\ k \ \textrm{even}}}^j (-1)^{\frac{k(3-k)}{2}} { j \choose k }_{-1}  w^{j-k}r w^k\Omega\\
      & = & \sum_{\substack{k=0 \\ k \ \textrm{odd}}}^j (-1)^{\frac{k(3-k)}{2}} { j \choose k }_{-1} \hspace{-0.2cm}\left(-\Omega w^{j-k}r  + w^{j-k}  \Omega r \right) \! w^k\\
      & & +  \sum_{\substack{k=0 \\ k \ \textrm{even}}}^j (-1)^{\frac{k(3-k)}{2}} { j \choose k }_{-1} \hspace{-0.2cm} w^{j-k}r\Omega  w^k,
   \end{eqnarray*}
which is true since $w^k\Omega=\Omega w^k$ ($k$ even) and $w^{j-k}\Omega=\Omega w^{j-k}$ ($k$ odd).

  If $b=g$, then \eqref{eq_sweedler} means 
 \begin{eqnarray*}
          &&   \sum_{\substack{k=0 \\ k \ \textrm{odd}}}^j (-1)^{\frac{k(3-k)}{2}} { j \choose k }_{-1}  w^{j-k}rw^k\Omega\\
      &= & \sum_{\substack{k=0 \\ k \ \textrm{even}}}^j (-1)^{\frac{k(3-k)}{2}} { j \choose k }_{-1} \hspace{-0.2cm}\left(-\Omega w^{j-k}r - w^{j-k}\Omega r\right) \! w^k \\
     &&-  \sum_{\substack{k=0 \\ k \ \textrm{odd}}}^j (-1)^{\frac{k(3-k)}{2}} { j \choose k }_{-1} w^{j-k}r \Omega w^k,
   \end{eqnarray*}
that is equivalent to
 \begin{align*}
          &   \sum_{\substack{k=0 \\ k \ \textrm{odd}}}^j (-1)^{\frac{k(3-k)}{2}} { j \choose k }_{-1}  w^{j-k}rw^{k-1}\left(w\Omega + \Omega w\right)\\
      = & - \sum_{\substack{k=0 \\ k \ \textrm{even}}}^j (-1)^{\frac{k(3-k)}{2}} { j \choose k }_{-1} \left(\Omega w + w\Omega \right) w^{j-k-1} rw^k.
   \end{align*}

By a change of variable, the last equality becomes
    \begin{align*}
  \begin{split}
          &   - \sum_{\substack{k=0 \\ k \ \textrm{even}}}^j (-1)^{\frac{k(1-k)}{2}} { j \choose k+1 }_{-1} w^{j-k-1}rw^{k}\left(w\Omega + \Omega w\right)\\
      = & - \sum_{\substack{k=0 \\ k \ \textrm{even}}}^j (-1)^{\frac{k(3-k)}{2}} { j \choose k }_{-1} \left(\Omega w + w\Omega \right) w^{j-k-1} rw^k,
  \end{split}
   \end{align*}
   which is true since $\Omega w + w\Omega\in Z(R)$, and ${ j \choose k+1 }_{-1}= { j_D \choose k_D }={ j \choose k}_{-1}$ for $0\leq k \leq j$ when $j$ is odd and $k$ is even (see \S \ref{Sec_qcomb}).

   Similarly, one checks for $b=x$ and $b=gx$, using $\Omega^2\in Z(R)$.
   Therefore, \eqref{eq_sweedler}, and consequently \eqref{somatorio_acao}, holds for $a=gx$ and $b \in \{1, g,  x, gx \}$.
   It concludes $(i) \Longleftrightarrow (iii)$.
   Analogously we obtain $(ii) \Longleftrightarrow (iii)$.
\end{proof}

Then, we characterize some partial actions of a Hopf-Ore extensions of $\mathbb{H}_4$ on $R$, as follows.

\begin{prop}\label{carac_acoes_extensoes_Ore_Sweedler}
    Let $\cdot : \mathbb{H}_4 \otimes R \to R$ be a partial action such that $g \cdot 1_R=0.$ 
    Then the linear map $\cdot$ defined in \eqref{formula_ext} is a symmetric partial action of $\mathbb{H}_4[y,\sigma]$ on $R$ if and only if $w \Omega + \Omega w \in Z(R)$, where $x \cdot 1_R := \Omega$ and $y \cdot 1_R := w$.
\end{prop}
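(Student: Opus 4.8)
The plan is to prove Proposition \ref{carac_acoes_extensoes_Ore_Sweedler} by reducing the verification of all partial-action axioms for the extended map $\cdot$ on $\mathbb{H}_4[y,\sigma]$ to a single scalar-free condition, namely $w\Omega + \Omega w \in Z(R)$, and then invoking the accumulated machinery. Since $\mathbb{H}_4$ is generated as an algebra by $g$ and $x$, and $\mathbb{H}_4[y,\sigma]$ is generated by $g$, $x$ and $y$, every element of $\mathbb{H}_4$ is a linear combination of $1, g, x, gx$. Thus it suffices to check the defining equalities of a symmetric partial action only on these four basis elements in the $a$-slot and on the same four elements in the $b$-slot. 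By Theorem \ref{formula_eh_acao_parcial_volta}, the map $\cdot$ given by \eqref{formula_ext} automatically satisfies (PA.1) and (PA.2), so the entire content reduces to verifying the compatibility conditions \eqref{somatorio_acao} and \eqref{somatorio_acao_simetrica} (equivalently \eqref{cod_volta} and \eqref{cod_voltasym}) for all $a,b$ in $\{1,g,x,gx\}$.

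The first step is to dispose of the ``easy'' basis elements. By Lemma \ref{acao_skew_primitivo_extensao}(i), condition \eqref{somatorio_acao} holds for $a\in\{1,g,x\}$ and every $b$, with \emph{no} hypothesis beyond $g\cdot 1_R=0$; dually, Lemma \ref{acao_skew_primitivo_extensao}(ii) gives \eqref{somatorio_acao_simetrica} for $a\in\{1,g^{-1},g^{-1}x\}=\{1,g,gx\}$ (using $g^2=1$). The only remaining cases are therefore $a=gx$ for \eqref{somatorio_acao} and $a=x$ for \eqref{somatorio_acao_simetrica}. This is precisely the content of Lemma \ref{acao_skew_primitivo_extensao2}: it shows that \eqref{somatorio_acao} for $a=gx$ (ranging over $b\in\{1,g,x,gx\}$), \eqref{somatorio_acao_simetrica} for $a=x$, and the scalar condition $w\Omega+\Omega w\in Z(R)$ are all equivalent. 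Combining these two lemmas, the full family of compatibility conditions \eqref{somatorio_acao}--\eqref{somatorio_acao_simetrica} holds for all $a,b\in\{1,g,x,gx\}$ if and only if $w\Omega+\Omega w\in Z(R)$.

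The second step is to convert this into the statement of the proposition. For the \emph{forward} direction, if $\cdot$ is a symmetric partial action then in particular \eqref{somatorio_acao} holds for $a=gx$, so by Lemma \ref{acao_skew_primitivo_extensao2} we conclude $w\Omega+\Omega w\in Z(R)$. For the \emph{converse}, if $w\Omega+\Omega w\in Z(R)$ then Lemma \ref{acao_skew_primitivo_extensao2} supplies the missing cases $a=gx$ and $a=x$, while Lemma \ref{acao_skew_primitivo_extensao} supplies the rest; hence \eqref{somatorio_acao} and \eqref{somatorio_acao_simetrica} hold on a spanning set of $\mathbb{H}_4$, and by linearity on all of $\mathbb{H}_4$. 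By Remark \ref{remark_cod} these are exactly the conditions \eqref{cod_volta} and \eqref{cod_voltasym}, so Theorem \ref{formula_eh_acao_parcial_volta}(iii)--(iv) guarantees that $\cdot$ is a symmetric partial action of $\mathbb{H}_4[y,\sigma]$ on $R$.

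The main obstacle is almost entirely packaged inside Lemma \ref{acao_skew_primitivo_extensao2}, so at the level of the proposition the delicate point is simply the bookkeeping that guarantees the four basis elements $\{1,g,x,gx\}$ in the $a$-slot genuinely exhaust the compatibility conditions once the two lemmas are combined — in particular matching the two complementary index sets $\{1,g,x\}$ (from Lemma \ref{acao_skew_primitivo_extensao}(i)) and $\{gx\}$ (from Lemma \ref{acao_skew_primitivo_extensao2}) for \eqref{somatorio_acao}, and dually $\{1,g,gx\}$ against $\{x\}$ for \eqref{somatorio_acao_simetrica}. I would make this explicit rather than leaving it implicit, since the asymmetry in which basis elements each lemma handles for the two conditions is exactly where an error could hide. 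Everything else is a direct appeal to Theorem \ref{formula_eh_acao_parcial_volta} and the linearity of the maps involved.
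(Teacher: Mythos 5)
Your proposal is correct and takes essentially the same route as the paper: the paper's proof is the one-line citation of Lemmas \ref{acao_skew_primitivo_extensao} and \ref{acao_skew_primitivo_extensao2} (implicitly combined with Theorem \ref{formula_eh_acao_parcial_volta}, Remark \ref{remark_cod}, and linearity), which is exactly the argument you spell out. Your explicit bookkeeping of how the basis elements $\{1,g,x,gx\}$ of $\mathbb{H}_4$ are split between the two lemmas in the $a$-slot for \eqref{somatorio_acao} versus \eqref{somatorio_acao_simetrica} is precisely the detail the paper leaves implicit, and you have it right (using $g^{-1}=g$ and $g^{-1}x=gx$ since $g^2=1$).
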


\begin{proof}
It follows from Lemmas \ref{acao_skew_primitivo_extensao} and \ref{acao_skew_primitivo_extensao2}.
\end{proof}

\subsection{Nichols Hopf algebras} \label{Sec_Nichols}

In this section, we apply the developed results to describe some examples of partial actions of $\mathbb{H}_{2^n}$ on $R$.
The results presented here are already known in the literature \cite[\S 4.1]{FMS2}, but we describe how to get them in this context of Hopf-Ore extensions and quotients, inductively.

\medskip

We start recalling the family of Nichols Hopf algebras $\mathbb{H}_{2^n}$.
They were originally introduced in \cite{Taft}, but were named after the work of Nichols \cite{nichols}.
For convenience, we present them as in \cite[\S 2.2]{etingof}. 

Let $n \geq 2$ be an integer.
As algebra, $\mathbb{H}_{2^n}$ is generated by the letters $g,x_1, \cdots, x_{n-1}$ subject to the relations 
\begin{align*}
&g^2=1,  & &x_i g = -g x_i, &&x_ix_j = -x_jx_i,
\end{align*}
for all $i, j \in \I_{n-1}$.
Its coalgebra structure is determined by $g\in G(\mathbb{H}_{2^n})$ and $x_i\in P_{1,g}(\mathbb{H}_{2^n})$.

\smallbreak

Note that such Hopf algebras can be obtained by a iterated process of Hopf-Ore extensions and quotients as follows.

Let $\mathbb{H}_{2^2} = \mathbb{H}_{4} = \Bbbk \langle g, x_1 \ | \ g^2 =1, x_1^2 = 0, x_1 g = - g x_1 \rangle$ be the Sweedler's Hopf algebra; for $n \geq 2$ one obtains inductively
$\mathbb{H}_{2^{n+1}} := \mathbb{H}_{2^n}[x_n,\sigma_n]/\langle x_n^2 \rangle$, where $\sigma_n: \mathbb{H}_{2^n} \to \mathbb{H}_{2^n}$ is the automorphism given by $\sigma_n(g)=-g$ and
$\sigma_n(x_i)=-x_i$, for $i \in \I_{n-1}$.

\smallbreak

At this first moment, we note that any partial action $\cdot : \mathbb{H}_{2^{n}} \otimes R \to R$ such that $g \cdot 1_R = 1_R$ is a global action.
Since $\mathbb{H}_{2^{n}}$ is generated as algebra by $g, x_1, \dots, x_{n-1}$, and each $x_i$ is a $(1,g)$-primitive element, we get it by Lemma \ref{parte_global}.
Namely, the grouplike element $g$ acts by an automorphism $\alpha_g$ of $R$ such that $\alpha_g^2=Id_R$, and each skew-primitive element $x_i$ acts by an $\alpha_g$-derivation $\partial_i$ of $R$ such that $\alpha_g \partial_i = -\partial_i \alpha_g$ and $\partial_i \partial_j = -\partial_j \partial_i$, for $i, j \in \I_{n-1}$.
 In particular, any map $\partial_i$ is 2-nilpotent.

Consequently,
we can extend such action to a partial (indeed global) action of $\mathbb{H}_{2^n}[x_n, \sigma_n]$ on $R$, where the skew-primitive element $x_n$ acts by any $\alpha_g$-derivation $\partial_n$ of $R$ satisfying analogous relations to the previous ones, \emph{i.e.}, such that $\alpha_g \partial_n = -\partial_n \alpha_g$ and $\partial_n \partial_j = -\partial_j \partial_n$, for any $j \in \I_{n-1}$.

Finally, if we have that $\partial_n$ is $2$-nilpotent, then this action factorizes through $ \mathbb{H}_{2^n}[x_n, \sigma_n]/\langle x_n^2 \rangle$ and it is a global action of $\mathbb{H}_{2^{n+1}}$ on $R$. 

\smallbreak

Next, our efforts are to obtain the class of symmetric partial actions $\mathbb{H}_{2^{n+1}}$ on $R$ such that $g \cdot 1_R = 0$ and $x_i \cdot 1_R \in Z(R)$, for $i \in \I_n$, through an iterated process. 

First, consider a (symmetric) partial action of $\mathbb{H}_{4}$ on $R$ given by $g \cdot 1_R =0$ and $x \cdot 1_R = w_1 \in Z(R)$,
and $\mathbb{H}_{2^2}[x_2, \sigma_2]$.
By Proposition \ref{carac_acoes_extensoes_Ore_Sweedler}, any choice $w_2 \in Z(R)$ defines a symmetric partial action of $\mathbb{H}_{2^2}[x_2, \sigma_2]$ on $R$ through expression \eqref{formula_ext}, putting $x_2 \cdot 1_R := w_2$.
Moreover, by Lemma \ref{lemma_truncamento}, $x_2^j a \cdot r =0$ for all $j \geq 2$, and so this partial action induces a symmetric partial action of $\mathbb{H}_{2^3}= \mathbb{H}_{2^2}[x_2, \sigma_2]/ \langle x_2^2 \rangle$ on $R$.
Furthermore, by Lemmas \ref{acao_produto_de_skew_primitivos_nichols} and \ref{acao_produto_de_skew_primitivos_nichols2}, it is easy to conclude that such symmetric partial action satisfies $g \cdot 1_R = 0$, $x_i \cdot r  = g x_i \cdot r = w_i r$, for $i \in \I_{2}$, and $x_1 x_2 \cdot r  = g x_1 x_2 \cdot r =0$.

\smallbreak

Now, let $n \geq 2$ and $\cdot : \mathbb{H}_{2^n} \otimes R \to R$ be a symmetric partial action given by $g \cdot 1_R = 0$ and $x_i \cdot 1_R := w_i \in Z(R)$, for $i \in \I_{n-1}$.
Again, by Lemmas \ref{acao_produto_de_skew_primitivos_nichols} and \ref{acao_produto_de_skew_primitivos_nichols2}, we also conclude $x_i \cdot r  = g x_i \cdot r = w_i r$ and $g^\ell x_{i_1} x_{i_2} ... x_{i_s} \cdot r =0$, for $\ell \in \{0,1\},$ $s\geq 2$ and $i, i_k \in \I_{n-1}$.

To extend this partial action of $\mathbb{H}_{2^n}$ on $R$ to a symmetric partial action of $\mathbb{H}_{2^n}[x_n, \sigma_n]$ on $R$ via \eqref{formula_ext}, we have to investigate if conditions \eqref{cod_volta} and \eqref{cod_voltasym} hold for every $a, b \in \mathbb{H}_{2^n}, r \in R$ and $j \in \N$.
Consider  $w_n := x_n \cdot 1_R  \in Z(R)$; by Lemma \ref{lemma_truncamento}, it follows that $x_n^j b \cdot r =0$ for $j \geq 2$ and, consequently, the above conditions are valid for $j \geq 2$, since $ax_n^j  = x_n^j \sigma_n^{-j}(a)$.
Hence, we just need to verify them for $j=1$.

To do so, we present a similar result to Lemma \ref{acao_skew_primitivo_extensao2} as follows:
\begin{lem}\label{acao_skew_primitivo_extensao2_Nicohls}
      Let $\cdot : \mathbb{H}_{2^n} \otimes R \to R$ be a partial action such that $g \cdot 1_R =0$ and $x_i\cdot 1_R\in Z(R)$, for $i \in \I_{n-1}$. 
    Consider $\mathbb{H}_{2^n}[x_n, \sigma_n]$ and the linear map defined in \eqref{formula_ext}. Then,  the following assertions are equivalent:
         \begin{itemize}
    \item[(i)] \eqref{somatorio_acao} holds for $j=1$, $a=gx_i$ and $b \in \mathbb{H}_{2^n}$,
    \item[(ii)] \eqref{somatorio_acao_simetrica} holds for $j=1$, $a=x_i$ and $b \in \mathbb{H}_{2^n}$,
    \item[(iii)] $ (x_i\cdot 1_R) (x_n\cdot 1_R) \in Z(R)$.
    \end{itemize}
    \end{lem}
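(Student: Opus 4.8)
The plan is to adapt the proof of Lemma~\ref{acao_skew_primitivo_extensao2}, taking advantage of two features that make the present situation lighter: only $j=1$ is at stake, so no parity induction on $j$ is needed, and now $w_i := x_i \cdot 1_R$ lies in $Z(R)$. First I would assemble the action data from Lemmas~\ref{acao_produto_de_skew_primitivos_nichols} and \ref{acao_produto_de_skew_primitivos_nichols2}. Since $\sigma_n(g) = -g$ forces $q = \chi(g) = -1$, we have $g \cdot r = 0$, $x_i \cdot r = gx_i \cdot r = w_i r$ with $w_i \in Z(R)$, and, crucially, $g^{\ell} x_{j_1}\cdots x_{j_s} \cdot r = 0$ whenever $s \geq 2$. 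I would also record that $gx_i \in P_{g,1}(\mathbb{H}_{2^n})$ has the three-term coproduct $\Delta_2(gx_i) = gx_i \otimes g \otimes g + 1 \otimes gx_i \otimes g + 1 \otimes 1 \otimes gx_i$, exactly as in the Sweedler case, and that $\sigma_n^{-1} = \sigma_n$ multiplies each monomial by an overall sign.

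Expanding \eqref{somatorio_acao} for $j=1$ and $a = gx_i$ with these data, and using that $w_i$ is central, both sides depend on $b$ only through the four quantities $b\cdot r$, $gb\cdot r$, $x_ib\cdot r$ and $gx_ib\cdot r$, and the identity reduces to $w_i\,(wu - uw) = w\,(gx_ib\cdot r) - (x_ib\cdot r)\,w$, where $u = (1+g)b\cdot r$ and $w = x_n\cdot 1_R$. By the vanishing of monomials with at least two $x$-factors, these four quantities are zero unless $b$ lies in the span of $\{1, g, x_j, gx_j : j \in \I_{n-1}\}$, so by linearity it suffices to test the reduced identity on this finite list of monomial types, in perfect analogy with the set $\{1,g,x,gx\}$ used for $\mathbb{H}_4$. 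For $(i) \Rightarrow (iii)$ I would take $b = g$: then $u = r$ and the reduced identity becomes $w_i(wr - rw) = -\,w_i(wr - rw)$ for all $r$, that is $w_i(wr - rw) = 0$; since $w_i$ is central this says exactly $w_iw \in Z(R)$.

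For $(iii) \Rightarrow (i)$, assuming $w_iw \in Z(R)$ (equivalently $w_i(wr-rw) = 0$ for all $r$), I would verify the reduced identity on each monomial type: for $b = 1$ it holds unconditionally, for $b = g$ it is precisely the hypothesis, and for $b = x_j$ or $b = gx_j$ both sides vanish, because $x_ix_j\cdot r = gx_ix_j\cdot r = 0$ while $w_iw_j(wr-rw) = w_j\,w_i(wr-rw) = 0$ by centrality of $w_j$ together with (iii); longer monomials give zero on both sides. The equivalence $(ii) \Leftrightarrow (iii)$ follows in the same manner from \eqref{somatorio_acao_simetrica} with $a = x_i$. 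I expect the main difficulty to be organizational rather than computational: one must argue cleanly that the a priori infinite family of conditions indexed by $b \in \mathbb{H}_{2^n}$ collapses to the finite monomial list through the vanishing of long $x$-monomials, and keep careful track of the signs produced by $\sigma_n$ and by $x_ig = -gx_i$ when rewriting $gx_ib\cdot r$ and $x_ib\cdot r$.
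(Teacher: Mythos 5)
Your proposal is correct and follows essentially the same route as the paper, whose proof of this lemma is simply the instruction to adapt the Sweedler-case computation of Lemma \ref{acao_skew_primitivo_extensao2}: you carry out exactly that adaptation, and your reduced identity $w_i(wu-uw) = w(gx_ib\cdot r) - (x_ib\cdot r)w$ with $u=(1+g)b\cdot r$, the choice $b=g$ to extract condition (iii), and the collapse to the monomial list $\{1,g,x_j,gx_j\}$ via the vanishing of longer $x$-monomials all check out (the only cosmetic remark being that in the symmetric case (ii) with $a=x_i$ it is $b=1$, not $b=g$, that witnesses (iii), which your ``in the same manner'' comfortably absorbs).
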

\begin{proof}
   Analogous to the proof of Lemma \ref{acao_skew_primitivo_extensao2}.
\end{proof}

Now, we check \eqref{cod_volta} for $j=1$ proceeding by cases.
First, by Lemma \ref{acao_skew_primitivo_extensao}, it holds for $a=1$, $a=g$ or $a=x_i,$ for $i \in \I_{n-1}$, and any $b \in \mathbb{H}_{2^n}$.
Moreover, since $w_i \in Z(R)$ for all $i \in \I_{n}$, by Lemma \ref{acao_skew_primitivo_extensao2_Nicohls}  it also holds for $a=gx_i$, $i \in \I_{n-1}$, and any  $b \in \mathbb{H}_{2^n}$.
Therefore, all that remains is to check the condition for $a = g^\ell x_{i_1}...x_{i_s}$, where $\ell \in \{0,1\}$ and $s \geq 2$, and any $b \in \mathbb{H}_{2^n}$. 
 
One the one hand,
    \begin{equation}\label{cond_1_nic}
    \begin{split}
       a \cdot (x_n b \cdot r) & = a \cdot \left( w_n(b \cdot r) - (gb\cdot r)w_n \right) \\
       & = (a_1 \cdot w_n) (a_2b \cdot r) - (a_1gb \cdot r) (a_2 \cdot w_n),
    \end{split}
        \end{equation}
where  last equality follows from the symmetry of the partial action of $\mathbb{H}_{2^n}$ on $R$.
On the other hand, 
       \begin{equation}\label{cond_2_nic}
\begin{split}
        & (a_1 \cdot 1_R)(a_2 x_n b \cdot r) \\
        = & \ (a_1 \cdot 1_R)(x_n \sigma_n^{-1}(a_2)b \cdot r) \\
        = & \ (a_1 \cdot 1_R)\left(w_n(\sigma_n^{-1}(a_2)b \cdot r) - (g\sigma_n^{-1}(a_2)b\cdot r)w_n \right) \\
        = & \ (a_1 \cdot 1_R)w_n(\sigma_n^{-1}(a_2)b \cdot r)  - (a_1 \cdot 1_R) (g\sigma_n^{-1}(a_2)b\cdot r)w_n.
\end{split}
    \end{equation}

First, if $a =x_{i_1}x_{i_2}$, then
	\begin{align*}
		\Delta( x_{i_1}x_{i_2}) =  x_{i_1}x_{i_2} \o 1 - g x_{i_1} \o  x_{i_2}   
   + g x_{i_2} \o x_{i_1}  
		 + 1 \o x_{i_1}x_{i_2},
	\end{align*}
and so \eqref{cond_1_nic} and \eqref{cond_2_nic} mean, respectively,
\begin{align*}
    & (a_1 \cdot w_n) (a_2b \cdot r) - (a_1gb \cdot r) (a_2 \cdot w_n) \\
    = &    - ( g x_{i_1} \cdot w_n) ( x_{i_2} b \cdot r) + ( g x_{i_2} \cdot w_n) ( x_{i_1} b \cdot r)  +  ( g x_{i_1} gb \cdot r) ( x_{i_2} \cdot w_n) \\
    & - ( g x_{i_2} gb \cdot r) ( x_{i_1} \cdot w_n)  \\
    = & - w_{i_1} w_n ( x_{i_2} b \cdot r) + w_{i_2} w_n ( x_{i_1} b \cdot r) - (x_{i_1}b \cdot r) w_{i_2} w_n + (x_{i_2} b \cdot r) w_{i_1} w_n \\
    = & \ 0,
\end{align*}
and
\begin{align*}
   & (a_1 \cdot 1_R)w_n(\sigma_n^{-1}(a_2)b \cdot r)  - (a_1 \cdot 1_R) (g\sigma_n^{-1}(a_2)b\cdot r)w_n \\
   = & - (g x_{i_1} \cdot 1_R)w_n(\sigma_n^{-1}(x_{i_2})b \cdot r)  + (g x_{i_2} \cdot 1_R)w_n(\sigma_n^{-1}(x_{i_1})b \cdot r)  \\
   & + (gx_{i_1} \cdot 1_R) (g\sigma_n^{-1}(x_{i_2})b\cdot r)w_n  - (gx_{i_2} \cdot 1_R) (g\sigma_n^{-1}(x_{i_1})b\cdot r)w_n \\
   = & \ w_{i_1} w_n ( x_{i_2} b \cdot r)  - w_{i_2} w_n ( x_{i_1}b \cdot r) - w_{i_1} ( gx_{i_2} b \cdot r) w_n +  w_{i_2} (gx_{i_1} b \cdot r) w_n.
\end{align*}

Hence, \eqref{cod_volta} holds for $a = x_{i_1}x_{i_2}$ if and only if  
$$w_{i_1} w_n ( x_{i_2} b \cdot r) + w_{i_2} (gx_{i_1} b \cdot r) w_n = w_{i_2} w_n ( x_{i_1}b \cdot r) + w_{i_1} ( gx_{i_2} b \cdot r) w_n,$$
for any $b \in \mathbb{H}_{2^n}, r \in R$.
But, it is an easy computation to conclude that indeed $w_i (x_j b \cdot r) = w_i (g x_j b \cdot r) = w_j (x_i b \cdot r) = w_j (gx_i b \cdot r)$. Analogously, the same conclusion holds for $a = gx_{i_1}x_{i_2}$.

Now, consider $a=g^\ell x_{i_1}x_{i_2}...x_{i_s}$, where $\ell \in \{0,1\}$ and $s \geq 3$, and write $a = x_{i_1}x_{i_2}x_{i_3}y$, for some $y \in \mathbb{H}_{2^n}$. 
	Thus,
	\begin{eqnarray*}
		\Delta(a) & = &  x_{i_1}x_{i_2}x_{i_3}y_1 \o y_2 - g x_{i_1}x_{i_3}y_1 \o  x_{i_2}y_2 \\
		& & + \ g  x_{i_2}x_{i_3}y_1 \o x_{i_1}y_2 +  x_{i_3}y_1 \o x_{i_1}x_{i_2}y_2 \\
		& & + \ g x_{i_1}x_{i_2}y_1 \o x_{i_3} y_2  + x_{i_1}y_1 \o x_{i_2}x_{i_3} y_2 \\
		& & - \ x_{i_2} y_1 \o x_{i_1}x_{i_3} y_2 + g  y_1 \o x_{i_1}x_{i_2}x_{i_3} y_2.
	\end{eqnarray*}

Then, clearly both expressions \eqref{cond_1_nic} and \eqref{cond_2_nic} vanish, and so \eqref{cod_volta} holds, for any $b \in \mathbb{H}_{2^n}, r \in R$.	

To prove the symmetric condition \eqref{cod_voltasym}, one proceeds analogously.

Therefore, any choice of $w_n \in Z(R)$ defines a symmetric partial action of $\mathbb{H}_{2^n}[x_n, \sigma_n]$ on $R$ through the expression \eqref{formula_ext}, putting $x_n \cdot 1_R := w_n$.
At last, again by Lemma \ref{lemma_truncamento}, this action induces a symmetric partial action of the quotient $\mathbb{H}_{2^{n+1}}= \mathbb{H}_{2^n}[x_n, \sigma_n]/ \langle x_n^2 \rangle$ on $R$.

\smallbreak

Hence, in the above discussion, we reobtain the characterization of some partial actions of Nichols Hopf algebras on algebras given in \cite[\S 4.1]{FMS2}, but in the context of Hopf-Ore extensions and quotients.

\medbreak

\noindent{\textbf{Remark.}} We end the paper highlighting that the techniques and approach developed here can be used to obtain examples of partial actions for other families of Hopf algebras (mainly for the ones generated by grouplike and skew primitive elements).
In particular, we recover some examples of partial actions of pointed Hopf algebras of dimensions 8 and 16 over their base fields \cite{corresponding}.

\medbreak

\noindent{\textbf{Acknowledgments.}}
The authors thank Alveri Sant'Ana and Marcelo Muniz Alves for interesting suggestions, conversations and advice at different
moments of our research.

\bibliographystyle{abbrv}

\bibliography{referencias}

\end{document}